\newcounter{intro}
\newtheorem{theo}[intro]{Theorem}
\newtheorem{thm}{Theorem}[section]
\newtheorem{lem}[thm]{Lemma}
\newtheorem{prop}[thm]{Proposition}
\newtheorem{cor}[thm]{Corollary}
\theoremstyle{remark}
\newtheorem{rem}[thm]{Remark}
\newtheorem{rems}[thm]{Remarks}
\newtheorem*{merci}{Acknowledgements}
\numberwithin{equation}{section}   
\newcounter{counteroman}
\newcommand{\cref}[1]{Corollary~\ref{#1}}
\newcommand{\lref}[1]{Lemma~\ref{#1}}
\newcommand{\pref}[1]{Proposition~\ref{#1}}
\newcommand{\tref}[1]{Theorem~\ref{#1}}
\newcommand{\sref}[1]{Section~\ref{#1}}
\newcommand{\R}{\mathbb{R}}
\newcommand{\C}{\mathbb{C}}
\newcommand{\Z}{\mathbb{Z}}
\newcommand{\bS}{\mathbb{S}}
\newcommand{\bP}{\mathbb{P}}
\newcommand{\mcR}{\mathcal{R}}
\newcommand{\mcW}{\mathcal{W}}\newcommand{\mcZ}{\mathcal{Z}}\newcommand{\mcS}{\mathcal{S}}
\newcommand{\bpm}{\begin{pmatrix}}
\newcommand{\epm}{\end{pmatrix}}
\newcommand{\bdot}{\textbf{.}}
\newcommand{\ps}[2]{\left\langle#1|#2\right\rangle}
\newcommand{\nm}[1]{\left|#1\right|}
\newcommand{\nnm}[1]{\left\|#1\right\|}
\newcommand{\nmr}[1]{\bigl|#1\bigr|}
\newcommand{\nnmr}[1]{\bigl\|#1\bigr\|}
\newcommand{\rst}{\mathring{Ric}}
\newcommand{\rstg}{\mathring{Ric_g}}
\let\ve=\varepsilon
\let\vp=\varphi
\DeclareMathOperator{\ctr}{c}
\DeclareMathOperator{\Id}{Id}
\DeclareMathOperator{\Ric}{Ric}
\DeclareMathOperator{\trace}{tr}
\DeclareMathOperator{\vol}{vol}
\DeclareMathOperator{\Y}{Y}
\def\and{\,\, \mathrm{and}\,\,}
\begin{document}

\title[Optimal integral pinching results]
{Optimal integral pinching results}

\author{Vincent Bour}
\address{Institut Fourier - UMR 5582, Universit\'e J. Fourier, 100 rue des Maths, B.P.~74, 
38402 St Martin d'H\`eres Cedex, France}
\email{Vincent.Bour@ujf-grenoble.fr} 
\author{Gilles Carron}
\address{Laboratoire de Math\'ematiques Jean Leray (UMR 6629), Universit\'e de Nantes, 
2, rue de la Houssini\`ere, B.P.~92208, 44322 Nantes Cedex~3, France}
\email{Gilles.Carron@math.univ-nantes.fr}
\date{\today}
\begin{abstract}
In this article, we generalize the classical Bochner-Weitzenböck theorem for manifolds satisfying an integral pinching on the curvature. We obtain the vanishing of Betti numbers under integral pinching assumptions on the curvature, and characterize the equality case. In particular, we reprove and extend to higher degrees and higher dimensions a number of integral pinching results obtained by M.~Gursky for four-dimensional closed manifolds. 
\end{abstract}
\maketitle

\section{Introduction}

The Bochner method has led to important relations between the topology and the geometry of Riemannian manifolds (see \cite{Ber} for instance). The original theorem of S.~Bochner asserts that a closed $n$-dimensional Riemannian manifold with nonnegative Ricci curvature has a first Betti number smaller than $n$. The technique used by S.~Bochner has been refined, and the result extended to Betti numbers of higher degrees and to various notions of positive curvature. For instance S.~Gallot and D.~Meyer proved in \cite{GM} that the Betti numbers of a closed $n$-dimensional manifold with nonnegative curvature operator must be smaller than those of the torus of dimension $n$. More precisely, they proved that if a closed Riemannian manifold $(M^n,g)$ has a nonnegative curvature operator, i.e. if
\begin{equation}\label{eq:GMpinch}
\rho_g\leq \frac{1}{n(n-1)}R_g,
\end{equation}
where $-\rho_g$ stands for the lowest eigenvalue of the traceless curvature operator and $R_g$ is the scalar curvature of $(M^n,g)$, then for all $1\leq k\leq \frac{n}{2}$,
\begin{itemize}
\item either its $k^{th}$ Betti number $b_k(M^n)$ vanishes,
\item or equality holds in (\ref{eq:GMpinch}), $1\leq b_{k}\leq {n \choose k}$ and every harmonic $k$-form is parallel.
\end{itemize}

Recently, using the Ricci flow, C.~B\"ohm and B.~Wilking proved that a Riemannian manifold with positive curvature operator (i.e. which satisfies the strict inequality in \eqref{eq:GMpinch}) is not only an homological sphere, but is in fact diffeomorphic to a spherical space form (\cite{BW}). A little while later, S.~Brendle and R.~Schoen proved that this is still true for manifolds with $1/4$-pinched sectional curvature (\cite{BS1,BS2}).

In 1998, in his paper \cite{gursky}, M.~Gursky obtained several Bochner's type theorems in dimension four. The striking fact in his work is that the assumption on the curvature is only required in an integral sense. He later refined part of his results in \cite{gursky2}. 

Our formulation of M.~Gursky's results will be given in term of the Yamabe invariant
\begin{equation*}
\Y(M,g):=\inf_{\substack{\varphi\in C_0^\infty(M)\\\vp\neq 0}} \frac{\int_M\left[\frac{4(n-1)}{n-2}|d\varphi|^2+R_g\varphi^{2}\right]dv_g}{
\left(\int_M \varphi^{\frac{2n}{n-2}}dv_g\right)^{\frac{n-2}{n}}}
\end{equation*}
The Yamabe invariant is a conformal invariant: if $u$ is a smooth function, then
\begin{equation*}
\Y(M,g)=\Y\left(M,e^{2u}g\right),
\end{equation*}
hence it only depends on the conformal class $[g]=\left\{e^{2u}g,\,\, u\in C^\infty(M) \right\}$ of the metric $g$. 

When $M$ is closed, the Yamabe invariant has the following geometric interpretation:
\begin{equation*}
\Y(M,[g])=\inf_{\tilde g\in [g]}\left\{\frac{1}{\vol(M,\tilde g)^{1-\frac2n}}\int_M R_{\tilde g} dv_{\tilde g}\right\}\,.
\end{equation*}
According to the work of H.~Yamabe, N.~Trudinger, T.~Aubin and R.~Schoen, we can always find a metric $\tilde g\in [g]$ conformally equivalent to $g$ such that
\begin{equation*}
\frac{1}{\vol(M,\tilde g)^{1-\frac2n}}\int_M R_{\tilde g} dv_{\tilde g}=\Y(M,[g]).
\end{equation*}
The scalar curvature of such a metric $\tilde g$ is constant, and is equal to
\begin{equation*}
R_{\tilde{g}}=\frac{\Y(M^n,[g])}{\vol(M^n,g)^{\frac{2}{n}}}.
\end{equation*}
We call such a metric a \emph{Yamabe minimizer}. Using the Hölder inequality, we see that we always have
\begin{equation*}
 \Y(M^n,[g])\leq \nnm{R_g}_{L^\frac{n}{2}}, 
\end{equation*}
with equality if and only if $g$ is a Yamabe minimizer.

We can state two particular results of M.~Gursky's articles \cite{gursky} and \cite{gursky2} as follows:
\begin{thm}\label{thm:gursky} Assume that $(M^4,g)$ is a closed oriented manifold with positive Yamabe invariant.
\begin{enumerate}[i)]
\item If the traceless part of the Ricci curvature satisfies
\begin{equation}\label{eq:gursky1}
\int_M \nmr{\rstg}^2dv_g\leq \frac{1}{12} \Y(M^4,[g])^2,
\end{equation}
then
\begin{itemize}
\item either its first Betti number $b_1(M^4)$ vanishes,
\item or equality holds in (\ref{eq:gursky1}), $b_1=1$, $g$ is a Yamabe minimizer and $(M^4,g)$ is conformally equivalent to a quotient of $\,\bS^3\times \R$.
\end{itemize}
\item If the Weyl curvature satisfies
\begin{equation}\label{eq:gursky2}
\int_M \nm{W_g}^2dv_g \leq \frac{1}{24} \Y(M^4,[g])^2,
\end{equation}
then
\begin{itemize}
\item either its second Betti number $b_2(M^4)$ vanishes,
\item or equality holds in (\ref{eq:gursky2}), $b_2=1$ and $(M^4,g)$ is conformally equivalent to $\bP^2(\C)$ endowed with the Fubini-Study metric.
\end{itemize}
\end{enumerate}
\end{thm}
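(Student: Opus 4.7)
The plan is to combine the classical Bochner-Weitzenböck formula with a refined Kato inequality and to close the argument by coupling them to the Yamabe-Sobolev inequality on a Yamabe minimizer in $[g]$. I describe the scheme for part~(i); part~(ii) is entirely analogous.

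First I reduce to a Yamabe minimizer. In dimension four, the quantity $\int\nm{\rstg}^2\,dv_g-\tfrac{1}{12}\int R_g^2\,dv_g$ is a conformal invariant, a consequence of the conformal invariance of $\int\nm{W_g}^2\,dv_g$ and Chern-Gauss-Bonnet. For a Yamabe minimizer $\tilde g\in[g]$ with constant scalar curvature $R_{\tilde g}=\Y(M,[g])/\vol(M,\tilde g)^{1/2}$, one has $\int R_{\tilde g}^2\,dv_{\tilde g}=\Y(M,[g])^2$, while Hölder gives $\int R_g^2\,dv_g\geq\Y(M,[g])^2$ for any $g\in[g]$ with positive Yamabe invariant. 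Therefore (\ref{eq:gursky1}) for $g$ implies the same bound for $\tilde g$, and equality forces $g$ itself to be a Yamabe minimizer. I may thus assume $R=R_g$ is the positive constant $\Y/\vol^{1/2}$.

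Next, let $\omega$ be a nonzero harmonic 1-form. The Weitzenböck formula $\tfrac{1}{2}\Delta\nm{\omega}^2=-\nm{\nabla\omega}^2-\Ric(\omega,\omega)$, combined with the refined Kato inequality $\nm{\nabla\omega}^2\geq\tfrac{4}{3}\nm{\nabla\nm{\omega}}^2$ for harmonic 1-forms in dimension four and the decomposition $\Ric=\rstg+\tfrac{R}{4}g$, yields a pointwise inequality of the form $\nm{\omega}\Delta\nm{\omega}+\tfrac{1}{3}\nm{\nabla\nm{\omega}}^2+\tfrac{R}{4}\nm{\omega}^2\leq \tfrac{\sqrt{3}}{2}\nm{\rstg}\nm{\omega}^2$, where $\tfrac{\sqrt{3}}{2}$ is the sharp eigenvalue constant for a traceless symmetric tensor on $\R^4$ (an extremal eigenvalue is at most $\sqrt{3/4}$ times the Frobenius norm). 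Multiplying by $\nm{\omega}^{p-2}$, integrating, and setting $h:=\nm{\omega}^{p/2}$ gives, after integration by parts,
\[
\tfrac{4(3p-2)}{3p^2}\int\nm{\nabla h}^2\,dv+\tfrac{R}{4}\int h^2\,dv \leq \tfrac{\sqrt{3}}{2}\int\nm{\rstg}h^2\,dv.
\]
The left coefficient is maximal at $p=4/3$, where it equals $\tfrac{3}{2}$. Applying Cauchy-Schwarz to the right-hand side to split off $\alpha:=(\int\nm{\rstg}^2\,dv)^{1/2}$, and then the Yamabe-Sobolev inequality $\Y(\int h^4\,dv)^{1/2}\leq 6\int\nm{\nabla h}^2\,dv+R\int h^2\,dv$ to estimate the $L^4$ norm, rearrangement produces
\[
\Bigl(\tfrac{3}{2}-\tfrac{3\sqrt{3}\,\alpha}{\Y}\Bigr)\int\nm{\nabla h}^2\,dv+R\Bigl(\tfrac{1}{4}-\tfrac{\sqrt{3}\,\alpha}{2\Y}\Bigr)\int h^2\,dv\leq 0.
\]
Both coefficients are nonnegative exactly when $\alpha^2\leq\Y^2/12$; strict pinching then forces $h\equiv 0$, i.e.\ $b_1=0$.

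For the equality case, all inequalities must saturate simultaneously: Yamabe-Sobolev saturation combined with the Hölder step makes $g$ a Yamabe minimizer and $\nm{\omega}$ constant, so refined Kato then forces $\nabla\omega=0$; meanwhile the eigenvalue-bound saturation, together with $\Ric(\omega,\omega)=0$ (coming from $\nabla^*\nabla\omega=0$), forces $\Ric$ to have pointwise spectrum $(0,\tfrac{R}{3},\tfrac{R}{3},\tfrac{R}{3})$ with $\omega$ spanning the zero eigenspace. Hence the de~Rham decomposition of the universal cover splits off a flat line factor dual to $\omega$, while the orthogonal 3-dimensional factor is Einstein with Ricci $R/3$, hence of constant sectional curvature $R/6$, i.e.\ a round $\bS^3$. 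Thus $(M,g)$ is a quotient of $\bS^3\times\R$ with $b_1=1$. Part~(ii) runs exactly the same argument on a harmonic self-dual 2-form $\omega^+$, with Weitzenböck curvature term $-2\langle W^+(\omega^+),\omega^+\rangle+\tfrac{R}{3}\nm{\omega^+}^2$, refined Kato $\nm{\nabla\omega^+}^2\geq\tfrac{3}{2}\nm{\nabla\nm{\omega^+}}^2$ for self-dual harmonic 2-forms in dimension four, and the sharp bound $\langle W^+(\omega^+),\omega^+\rangle\leq\sqrt{2/3}\,\nm{W^+}\nm{\omega^+}^2$ from the three-dimensional traceless spectrum on $\Lambda^+$; the analogous $p$-optimization and Yamabe-Sobolev coupling deliver the critical constant $\Y^2/24$. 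The equality case pins $W^+$ down to spectrum $(2\mu,-\mu,-\mu)$ pointwise with $\omega^+$ parallel, and the rigidity of Kähler-Einstein four-manifolds with this Weyl signature identifies $(M,g)$ conformally with $(\bP^2(\C),g_{FS})$. The main technical obstacle throughout is this last geometric recognition in the equality cases, where the pointwise algebraic constraints must be combined with global structural results to pin down the conformal class.
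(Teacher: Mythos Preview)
Your vanishing argument is correct and matches the paper's scheme: Bochner--Weitzenb\"ock, refined Kato, the sharp eigenvalue bound for a traceless symmetric tensor, and the Yamabe--Sobolev inequality, all coupled at the optimal exponent. The preliminary reduction to a Yamabe minimizer via the conformal invariance of $\int\nmr{\rst}^2-\tfrac{1}{12}\int R^2$ is a legitimate four-dimensional shortcut; the paper instead keeps the original metric and feeds the scalar-curvature term directly into the Yamabe functional.

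The genuine gap is in your equality analysis for part~(i). You claim that Yamabe--Sobolev saturation together with the H\"older step forces $\nm{\omega}$ to be constant and hence $\nabla\omega=0$. But saturation only says that $h=\nm{\omega}^{2/3}$ solves the Yamabe equation for $g$, i.e.\ that $\tilde g=h^2g$ is \emph{another} Yamabe minimizer; it does not follow that $h$ is constant, since Yamabe minimizers need not be unique. Concretely, on $\bS^3\times\bS^1_T$ the product metric ceases to be a Yamabe minimizer once $T$ is large (see the remark following the paper's treatment of this case), so on the actual minimizer the harmonic $1$-form has non-constant length and is not parallel --- your de~Rham splitting cannot run. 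Nor can you transfer the problem to $\tilde g$, because harmonic $1$-forms are not conformally invariant. The paper's route is to exploit the refined Kato \emph{equality} directly: by the Li--Wang argument (Proposition~\ref{liwang}), the relevant cover is a warped product $\eta^2(t)h+dt^2$ over $N^3\times\R$, and the eigenvalue-bound equality computed in these coordinates then forces $\rst_h=0$, so $N^3$ has constant sectional curvature and $(M,g)$ is conformal to a quotient of $\bS^3\times\R$. Your part~(ii) sketch is closer to correct because middle-degree harmonic forms \emph{are} conformally invariant, so the parallel K\"ahler form genuinely lives on the conformal metric $\nm{\omega^+}\,g$; the final identification, however, is not a generic ``K\"ahler--Einstein rigidity'' but Derdzi\'nski's classification of self-dual K\"ahler metrics with constant scalar curvature.
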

The norms of the curvature tensors are taken by considering them as symmetric operators on differential forms, for instance with the Einstein summation convention we have
\begin{equation*}
\nm{W}^2=\frac{1}{4}W_{ijkl}W^{ijkl}\quad \text{and}\quad \nm{Ric}^2=Ric_{ij}Ric^{ij}.
\end{equation*}
M.~Gursky proved these two results by finding a good metric in the conformal class of $g$, for which some pointwise pinching holds. Then, by combining a Bochner-Weitzenböck equation with the pointwise pinching, he was able to prove the vanishing of harmonic forms.  

The purpose of the article is to prove several generalizations of M.~Gursky's result. Instead of trying to obtain a pointwise pinching, we will take advantage of the Sobolev inequality induced by the positivity of the Yamabe invariant. We first prove an integral version of the classical Bochner-Weitzenb\"ock theorem (\tref{thm:bochnerkint}), which allows us to show that a large part of the Bochner theorem of S.~Gallot and D.~Meyer on the Betti numbers of manifolds with nonnegative curvature operator remains true if we only make the assumption in an integral sense:
\begin{theo}\label{thm:Gallot}
If $(M^{n},g)$, $n\geq 4$ is a closed Riemannian manifold such that
\begin{equation}\label{eq:Gallot}
\nnm{\rho_g}_{L^{\frac{n}{2}}}\leq \frac{1}{n(n-1)} \Y(M^n,[g]),
\end{equation}
then for all $1\leq k\leq \frac{n-3}{2}$ or $k=\frac{n}{2}$,
\begin{itemize}
\item either its $k^{th}$ Betti number $b_k(M^n)$ vanishes,
\item or equality holds in \eqref{eq:Gallot} and (up to a conformal change in the case $k=\frac{n}{2}$) the pointwise equality $\rho_g=\frac{1}{n(n-1)}R_g$ holds, $1\leq b_{k}\leq {n \choose k}$, every harmonic $k$-form is parallel and $g$ is a Yamabe minimizer.
\end{itemize}
\end{theo}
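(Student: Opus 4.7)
\emph{Plan.} The strategy is to apply the integral Bochner-Weitzenb\"ock theorem \tref{thm:bochnerkint} to harmonic $k$-forms, once the classical algebraic lemma of Gallot and Meyer has been used to translate the pinching \eqref{eq:Gallot} into a statement about the curvature endomorphism $\mathcal{R}_{k}$ entering the Weitzenb\"ock formula on $\Lambda^{k}T^{*}M$. Recall that this lemma asserts the pointwise inequality
\begin{equation*}
\langle \mathcal{R}_{k}\omega,\omega\rangle \geq k(n-k)\left(\frac{R_{g}}{n(n-1)}-\rho_{g}\right)|\omega|^{2},
\end{equation*}
so that \eqref{eq:Gallot} bounds the relevant negative curvature part in $L^{n/2}$ by a constant times $\Y(M,[g])$.

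For the generic range $1\leq k\leq (n-3)/2$, the plan is to test the pointwise Bochner formula against $|\omega|^{2\alpha-2}$ for a real parameter $\alpha\geq 1$, invoke the refined Kato inequality $|\nabla\omega|^{2}\geq (1+c_{k})|\nabla|\omega||^{2}$ valid for harmonic $k$-forms, and feed the resulting inequality for $\varphi:=|\omega|^{\alpha}$ into the Yamabe-Sobolev inequality. The parameter $\alpha$ is tuned so that the coefficients of $\int|\nabla\varphi|^{2}dv_{g}$ and $\int R_{g}\varphi^{2}dv_{g}$ align with those of the Yamabe quadratic form; this amounts to solving a quadratic in $\alpha$ whose discriminant is nonnegative precisely under $k\leq (n-3)/2$. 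An application of H\"older's inequality to the right-hand side, together with \eqref{eq:Gallot}, then forces either $\omega\equiv 0$ or equality throughout.

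The middle degree case $k=n/2$ is handled separately by conformal invariance: harmonicity of middle-degree forms and the integral $\int|\omega|^{2}dv_{g}$ are conformally invariant, so one may replace $g$ by a Yamabe minimizer $\tilde g\in [g]$ with $\nnm{R_{\tilde g}}_{L^{n/2}}=\Y(M,[g])$; the plain (unweighted) Bochner formula combined with H\"older, Yamabe-Sobolev, and the pinching \eqref{eq:Gallot} then yields the same dichotomy. In the equality case one extracts: equality in the Gallot-Meyer algebraic lemma (giving the pointwise pinching $\rho_{g}=R_{g}/(n(n-1))$ and a rigid structure on the curvature operator), equality in the refined Kato inequality (forcing $\nabla\omega=0$), and equality in H\"older (pinning $g$, or $\tilde g$ in the middle degree case, as a Yamabe minimizer). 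The bound $b_{k}\leq\binom{n}{k}$ then follows from the classical Gallot-Meyer theorem, now applicable since we have reduced to the pointwise curvature pinching.

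The main obstacle lies in the fine matching of constants in the generic case: the refined Kato inequality is indispensable, and its interaction with the Yamabe-Sobolev coefficient $\tfrac{4(n-1)}{n-2}$ is exactly what pins down the admissible range $k\leq (n-3)/2$. Outside this range the matching quadratic in $\alpha$ has negative discriminant, and a different route, presumably relying on a conformal reduction analogous to the middle-degree case, would be needed.
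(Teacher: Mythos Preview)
Your reduction is exactly the paper's proof: the Gallot--Meyer pointwise inequality $r_k\le k(n-k)\rho_g$ (both sides are nonnegative, being minus the lowest eigenvalues of traceless operators) converts \eqref{eq:Gallot} into the hypothesis of \tref{thm:bochnerkint}, and Theorem~A follows in one line.

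Your sketch of \tref{thm:bochnerkint} itself is close but diverges from the paper on two points. First, the paper does not solve a quadratic in the exponent; it fixes $p=\tfrac{n-1-k}{n-k}<1$ (not $\ge 1$) so that $2-p$ equals the refined Kato constant $\tfrac{n+1-k}{n-k}$, obtains for $v=|\xi|^p$ an inequality with scalar-curvature coefficient $\beta=\tfrac{4k(n-1-k)}{n(n-2)}$, and compares with the \emph{modified} Yamabe invariant via the concavity bound $\Y_g(\beta)\ge\beta\,\Y(M,[g])$. The range $k\le(n-3)/2$ is precisely the condition $\beta<1$ strictly, which is what \pref{prop:EqualityYaModif} needs to force $v$ constant and $g$ a Yamabe minimizer in the equality case.

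Second, your middle-degree plan has a genuine gap. The ``plain unweighted Bochner'' with ordinary Kato yields only $\int|du|^2+\tfrac{n}{4(n-1)}\int R_{\tilde g}u^2\le\int r_{n/2}u^2$, whose gradient coefficient $1$ is strictly smaller than the Yamabe coefficient $\tfrac{4(n-1)}{n-2}$; combining with the Yamabe inequality produces no contradiction. The paper instead runs the \emph{same} refined-Kato computation at $k=n/2$; then $p=\tfrac{n-2}{n}$ gives $\beta=1$, so $v=|\xi|^{(n-2)/n}$ sits exactly on the Yamabe functional, and in the equality case $v$ itself is the conformal factor producing the Yamabe minimizer $\tilde g$ for which $\xi$ is parallel---rather than a Yamabe minimizer chosen in advance. (Note also that $\rho_g$ is not conformally invariant, so one cannot invoke \eqref{eq:Gallot} after a conformal change; it is only after the Gallot--Meyer reduction that the relevant quantity $r_{n/2}$, depending solely on the Weyl tensor, has conformally invariant $L^{n/2}$ norm.)
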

\begin{rem}
In \tref{thm:Gallot}, as well as in the other theorems of the article, the two cases are not mutually exclusive, i.e. equality can hold in \eqref{eq:Gallot} while a number of Betti numbers vanish.
\end{rem}
We also obtain an alternative proof of \tref{thm:gursky} based on our integral Bochner-Weitzenböck theorem, and several generalizations of M.~Gursky's result to higher dimensions and higher degrees. In particular, we prove the following extension to higher dimensions of the first part of \tref{thm:gursky}:
\begin{theo}\label{thm:degre1compactnorm}
If $(M^{n},g)$, $n\geq 5$, is a compact Riemannian manifold with positive Yamabe invariant such that 
\begin{equation}\label{eq:degre1compactnorm}
\nnmr{\rstg}_{L^{\frac{n}{2}}}\le \frac{1}{\sqrt{n(n-1)}} \Y(M^n,[g]),
\end{equation}
then
\begin{itemize}
\item either its first Betti number $b_1(M^n)$ vanishes,
\item or equality holds in (\ref{eq:degre1compactnorm}), $b_1=1$, and there exists an Einstein manifold
  $(N^{n-1},h)$ with positive scalar curvature such that $(M^{n},g)$ is isometric to a quotient of the Riemannian product
\begin{equation*}
(N^{n-1}\times \R, h+(dt)^2).
\end{equation*}
\end{itemize}
\end{theo}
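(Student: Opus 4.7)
The plan is to test the integral Bochner-Weitzenböck inequality of \tref{thm:bochnerkint} against a harmonic 1-form, combine the result with the Yamabe/Sobolev inequality induced by the positivity of $\Y(M,[g])$, and exploit the sharp algebraic bound on the smallest eigenvalue of the traceless Ricci curvature. The sharp constant $\sqrt{n(n-1)}$ in \eqref{eq:degre1compactnorm} should arise directly from this eigenvalue bound.

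Let $\omega$ be a harmonic 1-form and set $u=\nm{\omega}$. Applying \tref{thm:bochnerkint} together with the refined Kato inequality $\nm{\nabla\omega}^2\geq \frac{n}{n-1}\nm{du}^2$ valid for harmonic 1-forms yields
$$\frac{n}{n-1}\int_M \nm{du}^2\,dv_g+\int_M \Ric_g(\omega,\omega)\,dv_g\leq 0.$$
Since $\rstg$ is a trace-free symmetric endomorphism, the constraints $\sum \lambda_i=0$ and $\sum\lambda_i^2=\nmr{\rstg}^2$ force its smallest eigenvalue to satisfy $\lambda_{\min}\geq -\sqrt{(n-1)/n}\,\nmr{\rstg}$, with equality only when the remaining $n-1$ eigenvalues all equal $\nmr{\rstg}/\sqrt{n(n-1)}$. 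Hence $\Ric_g(\omega,\omega)\geq \bigl(R_g/n-\sqrt{(n-1)/n}\,\nmr{\rstg}\bigr)u^2$, and multiplying through by $n$ and using Hölder gives
$$\frac{n^2}{n-1}\int_M \nm{du}^2\,dv_g+\int_M R_g u^2\,dv_g\leq \sqrt{n(n-1)}\,\nnmr{\rstg}_{L^{n/2}}\nnm{u^2}_{L^{n/(n-2)}}.$$

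Testing the Yamabe inequality with $\varphi=u$ gives in the other direction
$$\frac{4(n-1)}{n-2}\int_M \nm{du}^2\,dv_g+\int_M R_g u^2\,dv_g\geq \Y(M,[g])\,\nnm{u^2}_{L^{n/(n-2)}}.$$
Subtracting these two inequalities eliminates $\int R_g u^2$ and yields
$$\left[\frac{4(n-1)}{n-2}-\frac{n^2}{n-1}\right]\int_M \nm{du}^2\,dv_g\geq \bigl(\Y(M,[g])-\sqrt{n(n-1)}\nnmr{\rstg}_{L^{n/2}}\bigr)\nnm{u^2}_{L^{n/(n-2)}}.$$
The bracketed coefficient equals $(-n^3+6n^2-8n+4)/((n-2)(n-1))$, which is strictly negative for $n\geq 5$, while the right-hand side is nonnegative by hypothesis \eqref{eq:degre1compactnorm}. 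Both sides therefore vanish, so $du\equiv 0$ and either $\omega\equiv 0$ (giving $b_1=0$) or $u$ is a positive constant and every intermediate inequality is an equality.

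The equality case yields, step by step: Kato equality forces $\nabla\omega=0$, so $\omega$ is parallel; equality in the algebraic eigenvalue bound makes $\omega^\sharp$ span the one-dimensional minimum eigenspace of $\rstg$, with the remaining $n-1$ eigenvalues all equal to $\nmr{\rstg}/\sqrt{n(n-1)}$; Hölder equality makes $\nmr{\rstg}$ a constant; and Sobolev equality tested against a nonzero constant makes $g$ a Yamabe minimizer with constant $R_g$. The parallel unit vector field $\omega^\sharp/|\omega|$ and its parallel totally-geodesic orthogonal distribution give, by the de Rham decomposition theorem, an isometric splitting of the universal cover as $\tilde N^{n-1}\times\R$, and $(M,g)$ is then the claimed quotient. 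The vanishing of $\Ric_g$ in the $\R$-direction forces $\nmr{\rstg}=R_g/\sqrt{n(n-1)}$, so on $TN$ the Ricci tensor equals $\frac{R_g}{n-1}\Id$ and $N$ is Einstein with positive scalar curvature $R_g>0$. Finally, any other harmonic 1-form must also be parallel and lie in the simple minimum eigenspace of $\rstg$, hence be a constant multiple of $\omega$, giving $b_1=1$.

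The main obstacle is the careful bookkeeping of the equality case across several chained inequalities (refined Kato, the sharp algebraic eigenvalue bound, Hölder, and Sobolev) and assembling that pointwise data into the Riemannian splitting via de Rham; once the integral Bochner-Weitzenböck tool is in hand, the dimensional computation showing that the coefficient is negative for $n\geq 5$ is essentially a routine verification, and the precise power $1/\sqrt{n(n-1)}$ in \eqref{eq:degre1compactnorm} arises naturally as the extremal distribution of the eigenvalues of a trace-free symmetric tensor.
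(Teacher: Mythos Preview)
Your argument is correct, and it takes a genuinely different route from the paper's. The paper proves this theorem as an instance of \tref{thm:bochnerkint}: it tests the Bochner identity against $v=\nm{\omega}^{(n-2)/(n-1)}$, the power chosen so that the refined Kato term and the scalar-curvature term combine into the \emph{modified} Yamabe functional $\Y_g(\beta)$ with $\beta=\tfrac{4}{n}$; the concavity result \pref{prop:EqualityYaModif} then furnishes the rigidity (constant $v$ and $g$ a Yamabe minimizer) directly, since $\beta<1$ for $n\ge 5$. You instead test with $u=\nm{\omega}$ itself and simply compare the raw coefficient $\tfrac{n^2}{n-1}$ coming from Bochner plus Kato with the Yamabe coefficient $\tfrac{4(n-1)}{n-2}$; the numerical inequality $\tfrac{4(n-1)}{n-2}<\tfrac{n^2}{n-1}$ for $n\ge 5$ forces both sides to vanish. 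Your approach is more elementary---no modified Yamabe invariant is needed---while the paper's fits into a uniform framework for all degrees $k$ and explains the restriction $n\ge 5$ via $\beta<1$ rather than via a sign computation.

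Two small points. First, your opening phrase ``Applying \tref{thm:bochnerkint}'' is a misattribution: the displayed inequality $\tfrac{n}{n-1}\int\nm{du}^2+\int\Ric(\omega,\omega)\le 0$ is not a consequence of that theorem but simply the integrated Bochner identity $\int\nm{\nabla\omega}^2=-\int\Ric(\omega,\omega)$ combined with refined Kato; you should cite the Bochner formula directly. Second, your claim ``every intermediate inequality is an equality'' deserves one more line of justification: from $du=0$ and equality in \eqref{eq:degre1compactnorm} you get $\int R_g u^2=\Y\nnm{u}^2=\sqrt{n(n-1)}\nnm{\rstg}_{n/2}\nnm{u}^2$, and tracing back through the chain forces $\int\Ric(\omega,\omega)=0$, hence $\int\nm{\nabla\omega}^2=0$ by Bochner, which is what actually yields $\nabla\omega=0$ (rather than Kato equality per se). The remainder of your equality analysis---de Rham splitting, the Einstein conclusion on $N$, and $b_1=1$ via the simplicity of the lowest Ricci eigenspace---matches the paper's \S7.1.
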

\noindent We prove an analogue of the second part of \tref{thm:gursky} in dimension~$6$:
\begin{theo}\label{degre3compact} If $(M^{6},g)$ is a compact Riemannian manifold with positive Yamabe invariant such that 
\begin{equation}\label{eq:degre3compact}
\nnm{W_g}_{L^3}\leq \frac{1}{2\sqrt{10}}\Y(M^6,[g]),
\end{equation}
then
\begin{itemize}
\item either its third Betti number $b_3(M^6)$ vanishes,
\item or equality holds in (\ref{eq:degre3compact}), $b_3=2$, and there exist two positive numbers $a$ and $b$ such that $(M^6,g)$ is conformally equivalent to a quotient of $\left(\bS^3\times \bS^3, a\, g_{\bS^3}+b\, g_{\bS^3}\right)$.
\end{itemize}
\end{theo}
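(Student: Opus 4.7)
The plan is to apply the integral Bochner--Weitzenböck theorem (\tref{thm:bochnerkint}) to harmonic $3$-forms on $(M^6,g)$. Let $\omega$ be a non-zero harmonic $3$-form; we aim to derive an integral inequality forcing $\omega=0$ unless equality holds in \eqref{eq:degre3compact}. Two features of the middle-degree case $k=n/2=3$ in dimension six underlie the argument: the trace-free Ricci part of the curvature drops out of the Weitzenböck curvature operator on $\Lambda^3$ (as in Gursky's four-dimensional proof for $2$-forms), and the critical Sobolev exponent $\frac{2n}{n-2}=3$ in dimension six matches the Hölder dual of the $L^3$ norm of $W_g$.

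First I would decompose the Weitzenböck curvature operator $\mathcal{R}_3$ on $3$-forms. The scalar contribution equals $\frac{k(n-k)}{n(n-1)}R_g\,\omega = \frac{3R_g}{10}\omega$, and one obtains the pointwise lower bound
\begin{equation*}
\langle \mathcal{R}_3\omega,\omega\rangle \geq \frac{3R_g}{10}|\omega|^2 - c_W\,|W_g|\,|\omega|^2,
\end{equation*}
where $c_W$ is the operator norm of the Weyl action on $\Lambda^3(T^*_x M)$. The refined Kato inequality for harmonic $3$-forms on a $6$-manifold provides $|d|\omega||^2 \leq \kappa\,|\nabla\omega|^2$ for an explicit $\kappa<1$. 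The constants $c_W$ and $\kappa$ are tuned to produce the optimal pinching coefficient $\frac{1}{2\sqrt{10}}$.

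Next I would apply the Yamabe--Sobolev inequality to $\varphi=|\omega|$, using $\frac{4(n-1)}{n-2}=5$:
\begin{equation*}
\Y(M^6,[g])\left(\int_M|\omega|^3\,dv_g\right)^{2/3}\leq \int_M\!\left(5|d|\omega||^2 + R_g|\omega|^2\right)dv_g,
\end{equation*}
together with Hölder $\int_M|W_g|\,|\omega|^2\leq \|W_g\|_{L^3}(\int_M|\omega|^3)^{2/3}$. The integrated Bochner identity $\int_M|\nabla\omega|^2 = -\int_M\langle\mathcal{R}_3\omega,\omega\rangle$, combined with Kato and the pointwise Weyl bound, yields after rearrangement an inequality of the form
\begin{equation*}
\bigl(\Y(M^6,[g]) - 2\sqrt{10}\,\|W_g\|_{L^3}\bigr)\left(\int_M|\omega|^3\,dv_g\right)^{2/3}\leq 0,
\end{equation*}
in which all $\omega$-dependent terms have factored through the common integral $(\int_M|\omega|^3)^{2/3}$. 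Hypothesis~\eqref{eq:degre3compact} then forces either $\omega=0$ and hence $b_3(M^6)=0$, or equality throughout.

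In the equality case, equality in Kato with vanishing of the Bochner integrand forces $\nabla\omega=0$; equality in Sobolev forces $|\omega|$ constant and $g$ a Yamabe minimizer; and equality in the pointwise Weyl estimate aligns $W_g$ algebraically with $\omega$. A non-trivial parallel $3$-form on $M^6$ with this alignment reduces the holonomy to a subgroup of $SO(3)\times SO(3)$, so the universal cover splits as a Riemannian product of two $3$-manifolds. Each factor is conformally flat (being three-dimensional) and has constant positive scalar curvature, and the Weyl equality condition pins each factor down to a round sphere $\bS^3(a)$, $\bS^3(b)$. The two factor volume forms then give two independent parallel $3$-forms, so $b_3=2$. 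The main obstacle will be precisely this equality analysis: identifying the extremal Weyl configuration on $\Lambda^3(\R^6)$ and deducing the global holonomy reduction from a single parallel $3$-form together with the Weyl alignment constraint.
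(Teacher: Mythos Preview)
Your overall strategy matches the paper's, but there are two genuine gaps that prevent the argument from reaching the sharp constant $\tfrac{1}{2\sqrt{10}}$ and from handling the equality case correctly.

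\textbf{The test function must be $|\omega|^{2/3}$, not $|\omega|$.} If you feed $\varphi=|\omega|$ into the Yamabe inequality and use the refined Kato constant $\kappa=\tfrac{3}{4}$ together with the integrated Bochner identity, you obtain
\[
\Y(M,[g])\,\|\omega\|_{L^3}^2 \;\le\; \int_M\Bigl(5|d|\omega||^2+R_g|\omega|^2\Bigr)
\;\le\; -\tfrac{1}{8}\!\int_M R_g|\omega|^2 + \tfrac{15}{4}\!\int_M r_3|\omega|^2,
\]
so the scalar curvature does \emph{not} cancel and the resulting bound on $\|W\|_{L^3}$ is strictly weaker than $\tfrac{1}{2\sqrt{10}}\Y$. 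The paper's device (\sref{sec:BW}) is to set $v=|\omega|^p$ with $p=\tfrac{n-1-k}{n-k}=\tfrac23$, chosen so that $2-p=\tfrac{4}{3}$ equals the refined Kato constant; this makes the Kato remainder vanish \emph{pointwise} in $\Delta v$, and the coefficient in front of $R_g v^2$ becomes exactly $\tfrac{k(n-1-k)}{n(n-1)}=\tfrac15=\tfrac{n-2}{4(n-1)}$, matching the Yamabe functional with $\beta=1$. Only then does the chain of inequalities collapse to $\|r_3\|_{L^3}\ge\tfrac{3}{10}\Y$.

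\textbf{The Weyl bound needs the Hodge-star refinement.} Writing $\langle\mathcal R_3\omega,\omega\rangle\ge\tfrac{3R_g}{10}|\omega|^2-c_W|W||\omega|^2$ with $c_W$ the naive operator-norm constant gives $r_3^2\le\tfrac{38}{5}|W|^2$ (\lref{lem:eigenendo} with $d=\binom{6}{3}=20$). To get the sharp $r_3^2\le\tfrac{18}{5}|W|^2$ you must use that for $n/2$ odd the Hodge star forces every eigenvalue of $\mathcal W_3$ to have even multiplicity, effectively halving the dimension in \lref{lem:eigenendo}; see the paragraph around \eqref{rn2}.

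\textbf{The equality case.} Because $k=n/2$ gives $\beta=\tfrac{4k(n-1-k)}{n(n-2)}=1$, equality in the Yamabe inequality does \emph{not} force $|\omega|$ constant or $g$ a Yamabe minimizer (contrast \pref{prop:EqualityYaModif}, which needs $\beta<1$). What you actually get is that $v=|\omega|^{2/3}$ minimizes the Yamabe functional, so $\tilde g=|\omega|^{4/n}g$ is a Yamabe metric for which $\omega$ has constant length; only after this conformal change is $\omega$ parallel. This is why the conclusion is ``conformally equivalent'' rather than ``isometric''. The splitting of the universal cover and the identification of the factors as round $3$-spheres then follow from the algebraic equality case of the Hodge-refined eigenvalue estimate (\pref{prop:oddmidegnormsplit}) together with the observation that all $\mathcal R_j^{X_i}$ are scalar, forcing constant sectional curvature; the restriction to $n=6$ comes from the requirement that $\mathcal R_{n/2}$ have exactly two distinct eigenvalues.
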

And more generally, we obtain the following result (the constants $a_{n,k}$ and $b_{n,k}$ are defined in \sref{sec:eigenormineq}):
\begin{theo}\label{thm:NormPinch} If $(M^{n},g)$ is a compact Riemannian manifold with positive Yamabe invariant such that for some integer $1\leq k\leq \frac{n}{2}$, $k\neq \frac{n-1}{2}$, the following pinching holds:
\begin{equation}\label{eq:NormPinch}
\left(a_{n,k} \nnm{W}_{\frac{n}{2}}^2 + b_{n,k}\nnmr{\rst}_{\frac{n}{2}}^2\right)^{\frac{1}{2}}\leq\frac{k(n-k)}{n(n-1)} \Y(M,[g]),
\end{equation}
then 
\begin{itemize}
\item either its $k^{th}$ Betti number $b_{k}(M^n)$ vanishes,
\item or $n=4$ and equality holds in \tref{thm:gursky},
\item or $k=1$ and equality holds in \tref{thm:degre1compactnorm},
\item or $k=2$, $n\geq 7$  and $(M^n,g)$ is isometric to a quotient of: $a\left(\bS^2\times \frac{1}{n-5}\bS^{n-2}\right)$.
\item or $k=3$, $n=6$ and equality holds in \tref{degre3compact}.
\end{itemize}
\end{theo}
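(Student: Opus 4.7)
The plan is to reduce \tref{thm:NormPinch} to the integral Bochner-Weitzenböck theorem \tref{thm:bochnerkint}, by invoking the optimal pointwise eigenvalue bound on the Weitzenböck curvature endomorphism $\mcR_k$ acting on $k$-forms that is established in \sref{sec:eigenormineq}. Under the standard orthogonal decomposition
\begin{equation*}
\mcR_k = \tfrac{k(n-k)}{n(n-1)} R_g \, \Id + \mcR_k^W + \mcR_k^{\rst},
\end{equation*}
the defining property of the constants $a_{n,k}$ and $b_{n,k}$ is the pointwise bound
\begin{equation*}
\langle \mcR_k\omega,\omega\rangle \ge \left(\tfrac{k(n-k)}{n(n-1)} R_g - \sqrt{a_{n,k}\nm{W_g}^2 + b_{n,k}\nmr{\rstg}^2}\right)\nm{\omega}^2
\end{equation*}
for every $k$-form $\omega$.

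Next, for a harmonic $k$-form $\omega$ and $u = \nm{\omega}$, I would feed this estimate into \tref{thm:bochnerkint}: by combining the Bochner formula with the refined Kato inequality and the Sobolev inequality coming from the positive Yamabe invariant, the integral Bochner-Weitzenböck theorem will produce an inequality of the form
\begin{equation*}
\tfrac{k(n-k)}{n(n-1)} \Y(M,[g])\, \nnm{u}_{\frac{2n}{n-2}}^2 \le \int_M \sqrt{a_{n,k}\nm{W}^2 + b_{n,k}\nmr{\rst}^2}\, u^2\, dv_g.
\end{equation*}
A Hölder inequality converts the right-hand side into
\begin{equation*}
\sqrt{a_{n,k}\nnm{W}_{\frac{n}{2}}^2 + b_{n,k}\nnmr{\rst}_{\frac{n}{2}}^2}\cdot \nnm{u}_{\frac{2n}{n-2}}^2,
\end{equation*}
and by the pinching assumption \eqref{eq:NormPinch} this is in turn at most $\tfrac{k(n-k)}{n(n-1)} \Y(M,[g])\, \nnm{u}_{\frac{2n}{n-2}}^2$. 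Unless equality holds throughout the chain, this forces $u \equiv 0$, and hence $b_k(M^n)=0$.

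The hard part will be the classification of the equality case, where equality must be attained simultaneously in the Bochner-Weitzenböck, the refined Kato, the Hölder, the Sobolev, and the pointwise eigenvalue estimates. Equality in Hölder forces $a_{n,k}\nm{W}^2 + b_{n,k}\nmr{\rst}^2$ to be pointwise proportional to $u^{4/(n-2)}$; equality in Sobolev turns $g$ into a Yamabe minimizer (after a conformal change when $k=\frac{n}{2}$); equality in the eigenvalue estimate of \sref{sec:eigenormineq} imposes a very rigid block form on $W_g$ and $\rstg$ aligned with the eigenspace of the bottom eigenvalue of $\mcR_k$; equality in the refined Kato inequality pins down the covariant derivative $\nabla\omega$. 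The cases $n=4$, $k=1$, and $(k,n)=(3,6)$ should then fold respectively into \tref{thm:gursky}, \tref{thm:degre1compactnorm}, and \tref{degre3compact}. The delicate remaining case is $k=2$, $n\ge 7$: one has to show that the pointwise rigidity, combined with de Rham's splitting theorem applied to the resulting parallel harmonic $2$-form $\omega$, forces $(M^n,g)$ to be isometric to a quotient of $\bS^2 \times \bS^{n-2}$ carrying the specific product Einstein metric that achieves equality in the sharp eigenvalue estimate of \sref{sec:eigenormineq} for $k=2$, namely with radii in the ratio $1:\tfrac{1}{n-5}$.
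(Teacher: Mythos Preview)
Your plan is essentially the paper's: bound $r_k$ pointwise via \lref{lem:rk}, feed this into the integral Bochner argument of \sref{sec:BW}, then classify equality through the rigidity results of \sref{sec:eigenormineq}. Two points need sharpening.

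First, a minor correction: the test function in the Bochner argument is $v=|\omega|^{(n-1-k)/(n-k)}$, not $u=|\omega|$. This exponent is what makes the refined Kato constant line up with the modified Yamabe parameter $\beta=\tfrac{4k(n-1-k)}{n(n-2)}$ in \pref{prop:EqualityYaModif}; with $u=|\omega|$ the inequality you write does not follow.

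Second, your equality-case accounting is incomplete. The case $(k,n)=(2,6)$ is missing: here $\beta=1$, so \pref{prop:EqualityYaModif} no longer forces $v$ to be constant, and \tref{thm:bochnerkint} as stated does not cover this degree. The paper treats it separately (Section~7.5): equality in the refined Kato inequality together with the curvature form of \pref{prop:normsplit} gives $\beta+\gamma=0$, whence $R_g=-16\beta\le 0$, contradicting positivity of the Yamabe invariant; so $b_2=0$. You should also make explicit that for $3\le k<\tfrac{n}{2}$ the structure in \pref{prop:normsplit} forces $k=2$, and that for $k=\tfrac{n}{2}$ with $n\ge 8$ Propositions~\ref{prop:oddmidegnormsplit} and \ref{prop:evenmidegnormsplit} force $n\in\{4,6\}$; these contradictions are precisely why no such cases appear in the theorem's conclusion. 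Your remark on the ``rigid block form'' gestures at this but does not close the argument.
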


It should be noticed that for a closed four-dimensional manifold $(M^4,g)$, the condition
\begin{equation*}
\int_M\nm{W_g}^2dv_g+\frac12\int_M\nmr{\rstg}^2dv_g\leq \frac{1}{24} \int_M R_g^2dv_g
\end{equation*}
is conformally invariant, hence choosing $g$ to be a Yamabe minimizer, one can state the following corollary of \tref{thm:gursky}:
\begin{cor}
If $(M^4,g)$ is a closed Riemannian manifold with positive Yamabe invariant such that 
\begin{equation}\label{eq:pinchingchang}
\int_M\nm{W_g}^2dv_g+\frac12\int_M\nmr{\rstg}^2dv_g\leq \frac{1}{24} \int_M R_g^2dv_g, 
\end{equation}
then
\begin{enumerate}[i)]
\item either the Betti numbers $b_1(M^4)$ and $b_2(M^4)$ vanish,
\item or equality holds in \eqref{eq:pinchingchang}, the first Betti number vanishes, $b_2=1$ and $(M^4,g)$ is conformally equivalent to $\bP^2(\C)$ endowed with the Fubini-Study metric (up to orientation),
\item or equality holds in \eqref{eq:pinchingchang}, the second Betti number vanishes, $b_1=1$, and $(M^4,g)$ is conformally equivalent to  a quotient of $\bS^3\times \R$.
\end{enumerate}
\end{cor}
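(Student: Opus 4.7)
The plan is to exploit the conformal invariance in dimension four of the pinching quantity appearing in \eqref{eq:pinchingchang}, reduce to a Yamabe minimizer representative of $[g]$, and then split the assumption into the two individual hypotheses of \tref{thm:gursky}.

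The first step is to verify the conformal invariance of \eqref{eq:pinchingchang}. In dimension four, $\nm{W_g}^2 dv_g$ is pointwise conformally invariant, hence $\int_M\nm{W_g}^2 dv_g$ is conformally invariant. Moreover, by the Chern--Gauss--Bonnet formula,
$$8\pi^2\chi(M)=\int_M \nm{W_g}^2 dv_g-\frac12\int_M \nmr{\rstg}^2 dv_g+\frac{1}{24}\int_M R_g^2 dv_g,$$
so that $\tfrac12\int\nmr{\rstg}^2-\tfrac{1}{24}\int R_g^2 =\int\nm{W_g}^2 -8\pi^2\chi(M)$ is conformally invariant as well, and hence so is the left minus right of \eqref{eq:pinchingchang}. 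We may therefore replace $g$ by a Yamabe minimizer in $[g]$; this representative has constant positive scalar curvature and satisfies $\int_M R_g^2 dv_g=\Y(M^4,[g])^2$. For such a representative, both terms on the left of \eqref{eq:pinchingchang} are nonnegative, so the hypothesis forces simultaneously
$$\int_M \nm{W_g}^2 dv_g\le\frac{1}{24}\Y(M^4,[g])^2 \and \int_M \nmr{\rstg}^2 dv_g\le\frac{1}{12}\Y(M^4,[g])^2,$$
which are precisely the hypotheses \eqref{eq:gursky2} and \eqref{eq:gursky1} of \tref{thm:gursky}.

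Applying both parts of \tref{thm:gursky} then yields the result by a direct case analysis. Part~(i) gives $b_1=0$, or the rigidity conclusion that $(M^4,g)$ is conformally equivalent to a quotient of $\bS^3\times\R$; part~(ii) gives $b_2=0$, or the rigidity that $(M^4,g)$ is conformally equivalent to $(\bP^2(\C),g_{FS})$. If $b_1=b_2=0$ we are in case~(i) of the corollary. If $b_2>0$, the Fubini--Study model is Einstein, so $\rstg\equiv 0$ for the Yamabe representative; combined with equality in \eqref{eq:gursky2} this saturates \eqref{eq:pinchingchang}, and since $\bP^2(\C)$ has $b_1=0$ we land in case~(ii). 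Symmetrically, if $b_1>0$, a quotient of $\bS^3\times\R$ is locally conformally flat so $W_g\equiv 0$ in the Yamabe representative; equality in \eqref{eq:gursky1} then saturates \eqref{eq:pinchingchang}, and such quotients have $b_2=0$, giving case~(iii). The remaining possibility $b_1>0$ and $b_2>0$ is excluded because the two rigidity models have incompatible Betti numbers. The only step requiring a genuine computation is the conformal invariance of \eqref{eq:pinchingchang}; once this is in hand, the corollary is a direct combination of the two parts of \tref{thm:gursky}, so I do not expect any substantial obstacle.
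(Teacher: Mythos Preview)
Your approach is correct and matches the paper's own reasoning: the paper simply observes that the pinching \eqref{eq:pinchingchang} is conformally invariant in dimension four, passes to a Yamabe minimizer so that $\int_M R_g^2\,dv_g=\Y(M^4,[g])^2$, and then reads off the two hypotheses of \tref{thm:gursky}. Your write-up supplies the details (Gauss--Bonnet for the invariance, and the case analysis for the rigidity statements) that the paper leaves implicit, but the strategy is identical.
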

A.~Chang, M.~Gursky and P.~Yang proved in \cite{changall1,changall2} that when the strict equality holds in \eqref{eq:pinchingchang}, the manifold is not only an homological sphere, but is in fact diffeomorphic to a quotient of the round sphere $\bS^4$.

In \cite{bour}, the first author has been able to recover part of this result by using the gradient flow of a quadratic curvature functional. An important step in the proof is to rule out the formation of singularities by a blow-up analysis: if a singularity occurs along the flow, the curvature must blow up, and one can consider a sequence of metrics near the singular time with curvature going to infinity. After a suitable dilatation, this sequence actually converges to a ``singularity model'', which is a non-compact manifold satisfying an integral pinching condition. The classification of the  singularities of those flows is therefore directly related to integral pinching results on non-compact manifolds. In \sref{sec:noncompact}, we will prove the following extension of \tref{thm:degre1compactnorm} to non-compact manifolds:
\begin{theo}\label{thm:noncompact}
Let $(M^{n},g)$, $n \geq 4$, be a complete non-compact Riemannian manifold with positive Yamabe invariant. Assume that the lowest eigenvalue of the Ricci curvature satisfies  $Ric_- \in L^{p}$ for some $p >\frac{n}{2}$, and assume that $R_g\in L^{\frac n2}$. If
\begin{equation}\label{eq:degre1complet}
  \nnmr{\rstg}_{L^{\frac{n}{2}}}+\frac{n-4}{4\sqrt{n(n-1)}}\nnm{R_g}_{L^{\frac{n}{2}}} \leq \frac{n}{4}\frac 1{\sqrt{n(n-1)}} \Y(M^n,[g]),
 \end{equation}
 then
 \begin{itemize}
 \item either $H^1_c(M,\Z)=\{0\}$ and in particular $M$ has only one end,
 \item or equality holds in (\ref{eq:degre1complet}), and there exists an Einstein manifold $(N^{n-1},h)$ with positive scalar curvature and $\alpha>0$  such that $(M^n,g)$ or one of its two-fold coverings is isometric to
\begin{equation*}
\left(N^{n-1}\times\mathbb R,\, \alpha\,\cosh^2(t)\left(h+(dt)^2\right)\right).
\end{equation*}
\end{itemize}
\end{theo}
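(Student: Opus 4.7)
The plan is to adapt the strategy of \tref{thm:degre1compactnorm} to $L^2$-harmonic $1$-forms on the non-compact manifold $(M^n,g)$, using the decay assumptions on $\Ric_-$ and $R_g$ to justify the integrations by parts and to pass to the limit in a cutoff argument.

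Assume that $H^1_c(M,\Z)\neq\{0\}$. Combining the Sobolev inequality furnished by the positive Yamabe invariant with the decay of $\Ric_-$ and $R_g$, we run Hodge theory with finite energy: a non-trivial class in compactly-supported cohomology produces a non-constant harmonic function with finite Dirichlet integral (via Li--Tam type arguments, exploiting that $H^1_c\neq\{0\}$ forces $M$ to have at least two ends), whose differential $\omega=du$ is a non-zero $L^2$-harmonic $1$-form. To this $\omega$ apply the integral Bochner-Weitzenb\"ock machinery developed for \tref{thm:bochnerkint}: combining Bochner-Weitzenb\"ock with the sharp refined Kato inequality $|\nabla\omega|^2\geq\tfrac{n}{n-1}|\nabla|\omega||^2$ yields a differential inequality on $f=|\omega|$ whose coefficients are controlled pointwise by $R_g$ and $|\rstg|$, the latter through the extremal eigenvalue bound $\tfrac{1}{\sqrt{n(n-1)}}|\rstg|$ on the traceless Ricci endomorphism of $1$-forms. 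Multiply by $f^{\alpha-1}\chi^2$ for a Lipschitz cutoff $\chi$ and an exponent $\alpha$ adapted to the Yamabe test function $f^{(\alpha+1)/2}$, integrate by parts, and pass to the limit $\chi\to 1$. The assumptions $R_g\in L^{\frac{n}{2}}$ and $\Ric_-\in L^p$ with $p>\tfrac{n}{2}$ (via Moser iteration, which controls $\nnm{f}_{L^{\frac{2n}{n-2}}}$ in terms of $\nnm{\omega}_{L^2}$) ensure that all cutoff error terms vanish. Applying H\"older and the Sobolev/Yamabe inequality to the $|\rstg|f^{\alpha+1}$ and $R_gf^{\alpha+1}$ terms, one obtains the scalar inequality
\begin{equation*}
\left(\frac n4\frac{\Y(M,[g])}{\sqrt{n(n-1)}}-\nnmr{\rstg}_{L^{\frac{n}{2}}}-\frac{n-4}{4\sqrt{n(n-1)}}\nnm{R_g}_{L^{\frac{n}{2}}}\right)\nnm{f}_{L^{\frac{2n}{n-2}}}^{\alpha+1}\leq 0.
\end{equation*}
Under the hypothesis \eqref{eq:degre1complet} the bracket is non-negative, hence either $f\equiv 0$ (contradicting $H^1_c(M,\Z)\neq\{0\}$) or equality holds in \eqref{eq:degre1complet}.

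In the equality case, every inequality used above must be saturated pointwise: the refined Kato equality forces $\nabla\omega$ to have rank one, so that $X=\omega^\sharp$ is a non-trivial closed conformal vector field; H\"older saturation and the Bochner equality identify $\rstg$ both on $X$ and on $X^\perp$. A conformal change by a suitable power of $|\omega|$ (the exponent being dictated by the saturation) makes $X$ parallel, and the De~Rham splitting---applied to a two-fold cover if necessary to globalize the sign of $X$---identifies the resulting metric with a Riemannian product $(N^{n-1}\times\R,\,h+(dt)^2)$ in which $(N,h)$ is Einstein of positive scalar curvature. Undoing the conformal change amounts to integrating an ODE for the conformal factor on the full real line, whose only complete positive solution compatible with the saturation constraints is (up to translation) $\alpha^{1/2}\cosh(t)$; this gives the announced warped metric $\alpha\cosh^2(t)(h+(dt)^2)$.

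The principal obstacle will be the rigorous cutoff analysis on the non-compact manifold: one must show $f\in L^{\frac{2n}{n-2}}$ and that the tail integrals in the integration by parts vanish in the limit $\chi\to 1$. This is precisely where the exponent $p>\tfrac{n}{2}$ on $\Ric_-$ and the decay $R_g\in L^{\frac{n}{2}}$ are essential. Once these estimates are in place, the algebraic/geometric analysis of the equality case follows the lines of \tref{thm:degre1compactnorm}, the additional $\nnm{R_g}_{L^{\frac{n}{2}}}$ term in the pinching condition reflecting the non-trivial conformal change introduced by the warping factor $\cosh^2(t)$.
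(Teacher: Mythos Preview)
Your overall strategy---produce an $L^2$ harmonic $1$-form from the topological hypothesis, feed it into the Bochner--Kato machinery with cutoffs, and analyse the equality case via a warped-product rigidity---matches the paper's. But three steps in your sketch do not go through as written.

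First, the implication ``$H^1_c(M,\Z)\neq\{0\}$ forces $M$ to have at least two ends'' is false in general; what holds (under the Sobolev inequality) is that $M$ \emph{or a two-fold cover} has at least two ends. The paper invokes this via \cite{carronpedon} and then runs the Cao--Shen--Zhu construction on the cover when necessary, checking that $|d\check\Phi|$ descends to $M$. Your two-fold cover appears only later, for the wrong reason.

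Second, the cutoff error $\int_M|d\chi_R|^2 v^2\,dv_g$ does not vanish merely because Moser iteration puts $v=|\xi|^{\frac{n-2}{n-1}}$ into $L^{\frac{2n}{n-2}}$: the exponent $2$ on $v$ is below $\frac{2n}{n-2}$, and without volume control the H\"older remainder can blow up. The paper's device is to pair $|d\chi_R|^2$ with $v^2$ via H\"older so that $v^{2\cdot\frac{n-1}{n-2}}=|\xi|^2\in L^1$ appears, leaving a factor $\|d\chi_R\|_{L^{2(n-1)}}^2\le R^{-2}\vol(B(2R))^{1/(n-1)}$; this stays bounded because $\Ric_-\in L^{n/2}\cap L^p$ gives the polynomial volume growth $\vol B(R)=O(R^{2(n-1)})$ via Gallot's isoperimetric estimate. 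This volume-growth step is essential and is absent from your outline.

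Third, your description of the equality case is not correct. Saturation of the refined Kato inequality for a harmonic $1$-form $\xi=d\Phi$ gives $\nabla d\Phi=a\,\mathrm{Id}_{\ker d\Phi}\oplus(-(n-1)a)$ on $\R\vec\nu$, which is neither rank one nor of closed-conformal type, and a conformal change by a power of $|\xi|$ does \emph{not} make $\xi$ parallel (for $1$-forms, $|\xi|_{\tilde g}=|\xi|_g^{(n-3)/(n-1)}$ is still non-constant). The paper instead applies the Li--Wang rigidity (\pref{liwang}) to obtain a warped product $\eta^2(t)h+dt^2$, rewrites it as $e^{-2f(s)}(h+ds^2)$, uses that $v$ is a Yamabe minimizer to turn the Yamabe equation into an ODE for $w=e^{\frac{n-2}{2}f}$, and then argues (using $|\xi|\to 0$ at infinity and completeness) that the $s$-interval is all of $\R$ and that the only admissible solution is $\cosh$. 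Finally, showing that $(N,h)$ is \emph{Einstein} (rather than merely of constant scalar curvature) requires a separate computation: for the $\cosh$-warped metric over a general $(N,h)$ the pinching constant equals $1$ only when $r_1(h)=0$, which uses the identification of $\Y(N\times\R,[h+ds^2])$ in \pref{prop:YamabeCylin}.
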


The structure of the paper is the following: in the next section, we recall the Bochner-Weitzenböck formula and state the extension of the Bochner-Weitzenböck theorem to manifolds with an integral pinched curvature. In section 3, we give estimates on the lowest eigenvalue of the traceless Bochner-Weitzenböck curvature and analyze the equality case. In section 4, we define a modified Yamabe invariant and prove a number of results related to the Yamabe invariant. In section 5, we recall the refined Kato inequality for harmonic forms and its equality case for $1$-forms. In section 6, we prove the extended Bochner-Weitzenböck theorem. In section 7, we prove the other optimal integral pinching theorems, and in the last section, we deal with non-compact manifolds and prove \tref{thm:noncompact}. 

\begin{merci} The authors thank Z.~Djadli for his comments on the article, and are partially supported by the grants ACG: ANR-10-BLAN 0105 and FOG: ANR-07-BLAN-0251-01.
\end{merci}

\section{The Bochner-Weitzenb\"ock formula}

We recall that an harmonic $k$-form $\xi$ satisfies the Bochner-Weitzenb\"ock formula
\begin{equation*}
\langle \nabla^*\nabla \xi,\xi\rangle=-\langle\mcR_k \xi,\xi \rangle
\end{equation*}
where the  Bochner-Weitzenb\"ock curvature:
\begin{equation*}
\mcR_k(x)\colon \Lambda^k T^*_xM\rightarrow \Lambda^k T^*_xM
\end{equation*}
is a symmetric operator that can be expressed by using the curvature operator. The trace of $\mcR_k$ is given by
\begin{equation*}
\trace\left(\mcR_k\right)=\left(\dim \Lambda^k T^*_xM\right) \frac{k(n-k)}{n(n-1)}\,R_g.
\end{equation*}
We let $-r_k$ be the lowest eigenvalue of the traceless part of the Bochner-Weitzenb\"ock curvature.

Since the nonnegativity of $\mcR_k$ is equivalent to
\begin{equation*}
r_k\leq \frac{k(n-k)}{n(n-1)}R_g,
\end{equation*}
the classical Bochner-Weitzenb\"ock theorem can be stated as follows:
\begin{thm}\label{thm:bochnerk}
Let $(M^{n},g)$, $n\geq 2$, be a compact Riemannian manifold. If
\begin{equation}\label{eq:pointwdegrekcompact}
r_k\leq \frac{k(n-k)}{n(n-1)}R_g,
\end{equation}
then
\begin{itemize}
\item either its $k^{th}$ Betti number $b_k(M^n)$ vanishes,
\item or equality holds in (\ref{eq:pointwdegrekcompact}), $1\leq b_{k}\leq {n \choose k}$ and every harmonic $k$-form is
parallel.
\end{itemize}
\end{thm}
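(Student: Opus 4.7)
The plan is the standard Bochner argument, now packaged cleanly using the hypothesis expressed through $r_k$. Let $\xi$ be a harmonic $k$-form on $M$. Starting from the Bochner-Weitzenböck identity recalled above, I would integrate by parts on the closed manifold $M$ to obtain
\begin{equation*}
\int_M |\nabla \xi|^2 \, dv_g = -\int_M \langle \mathcal{R}_k \xi, \xi \rangle \, dv_g.
\end{equation*}
The hypothesis \eqref{eq:pointwdegrekcompact} is designed precisely so that the pointwise lowest eigenvalue of $\mathcal{R}_k$ is
\begin{equation*}
\frac{k(n-k)}{n(n-1)} R_g - r_k \geq 0,
\end{equation*}
so $\mathcal{R}_k$ is pointwise nonnegative as a symmetric endomorphism of $\Lambda^k T^*M$. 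Consequently the right-hand side is $\leq 0$ while the left-hand side is $\geq 0$, forcing both integrals to vanish.

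From $\int_M |\nabla \xi|^2 = 0$, I would conclude that every harmonic $k$-form $\xi$ is parallel. If $b_k(M^n) = 0$ we are in the first alternative and done, so assume there exists a nonzero harmonic $k$-form $\xi$. Because $\xi$ is parallel, $|\xi|$ is a nonzero constant, and in particular $\xi(x) \neq 0$ at every $x \in M$. The pointwise identity $\langle \mathcal{R}_k \xi, \xi \rangle = 0$ combined with the nonnegativity of $\mathcal{R}_k$ then gives $\mathcal{R}_k(x)\,\xi(x) = 0$ for all $x$, so $\mathcal{R}_k$ has a nontrivial kernel at every point. This means its lowest eigenvalue is zero, which, together with the trace formula
\begin{equation*}
\operatorname{tr}(\mathcal{R}_k) = \binom{n}{k}\,\frac{k(n-k)}{n(n-1)}\, R_g,
\end{equation*}
forces $r_k = \tfrac{k(n-k)}{n(n-1)} R_g$ pointwise, i.e. equality in \eqref{eq:pointwdegrekcompact}.

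Finally, the bound $1 \leq b_k \leq \binom{n}{k}$ comes from the fact that the Hodge-de Rham isomorphism identifies $H^k(M;\mathbb{R})$ with the space of harmonic $k$-forms; since every such form is parallel, evaluation at a fixed point $x_0$ gives an injective linear map into the fiber $\Lambda^k T^*_{x_0} M$, which has dimension $\binom{n}{k}$, while the lower bound is just the assumption $b_k \neq 0$. There is really no hard step here: the entire argument is driven by the Bochner-Weitzenböck formula, and the only mildly delicate point is tracking that equality on a connected component propagates globally, which is automatic from the parallel transport of $\xi$ (and from $M$ being assumed connected as a compact manifold supporting a single Betti number).
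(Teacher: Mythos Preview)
Your argument is correct and is exactly the classical Bochner method. Note that the paper does not actually prove \tref{thm:bochnerk}: it is stated there as the \emph{classical} Bochner--Weitzenb\"ock theorem and serves only as motivation for the integral version \tref{thm:bochnerkint}. So there is no paper proof to compare against; your write-up is the standard one the paper implicitly has in mind.

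One tiny remark: in the equality step you invoke the trace formula, but you do not need it. By definition $-r_k$ is the lowest eigenvalue of the traceless part of $\mathcal{R}_k$, so the lowest eigenvalue of $\mathcal{R}_k$ itself is $\tfrac{k(n-k)}{n(n-1)}R_g - r_k$; once you have shown this equals zero (because $\mathcal{R}_k(x)\xi(x)=0$ with $\xi(x)\neq 0$ and $\mathcal{R}_k(x)\geq 0$), the pointwise equality $r_k=\tfrac{k(n-k)}{n(n-1)}R_g$ is immediate. The trace formula is already encoded in that decomposition and need not be cited separately.
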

In section 6, we will prove the following integral version of \tref{thm:bochnerk}:
\begin{thm}\label{thm:bochnerkint}
If $(M^{n},g)$, $n\geq 4$, is a compact Riemannian manifold such that for some integer $1\leq k\leq \frac{n-3}{2}$ or $k=\frac{n}{2}$ the following pinching holds:
\begin{equation}\label{eq:degrekcompact}
\nnm{r_k}_{L^{\frac{n}{2}}}\leq \frac{k(n-k)}{n(n-1)} \Y(M,[g]),
\end{equation}
then 
\begin{itemize}
\item either its $k^{th}$ Betti number $b_{k}(M^n)$ vanishes,
\item or equality holds in  \eqref{eq:degrekcompact} and (up to a conformal change in the case $k=\frac{n}{2}$) the  pointwise equality $r_k=\frac{k(n-k)}{n(n-1)}R_g$ holds, $1\leq b_{k}\leq {n \choose k}$, every harmonic $k$-form is parallel and $g$ is a Yamabe minimizer.
\end{itemize}
\end{thm}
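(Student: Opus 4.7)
The plan is to combine the Bochner-Weitzenböck identity, the refined Kato inequality (Section~5), Hölder's inequality, and the Sobolev inequality provided by the positivity of the Yamabe invariant. I would begin with a nontrivial harmonic $k$-form $\xi$ and set $u=|\xi|$. Integrating the Bochner-Weitzenböck formula and using the definition of $-r_k$ as the smallest eigenvalue of the traceless part of $\mcR_k$ produces
$$\int_M|\nabla\xi|^2\,dv_g+\frac{k(n-k)}{n(n-1)}\int_M R_g\,u^2\,dv_g\leq\int_M r_k\,u^2\,dv_g.$$
On the left I replace $|\nabla\xi|^2$ by $C_{n,k}|du|^2$ via the refined Kato inequality for harmonic $k$-forms, with $C_{n,k}=(n-k+1)/(n-k)$ when $k\leq n/2$. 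On the right, Hölder gives $\int_M r_k u^2\leq\nnm{r_k}_{L^{n/2}}\nnm{u}_{L^{2n/(n-2)}}^2$, and the positive-Yamabe Sobolev inequality yields
$$Y(M,[g])\,\nnm{u}_{L^{2n/(n-2)}}^2\leq\int_M\Bigl(\tfrac{4(n-1)}{n-2}|du|^2+R_g\,u^2\Bigr)dv_g.$$

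Feeding the pinching hypothesis~\eqref{eq:degrekcompact} into the resulting chain makes the two $R_gu^2$ integrals cancel exactly, leaving
$$\Bigl(C_{n,k}-\tfrac{4k(n-k)}{n(n-2)}\Bigr)\int_M|du|^2\,dv_g\leq 0.$$
For $1\leq k\leq(n-3)/2$ the parenthesized factor is strictly positive, forcing $|du|\equiv0$; thus $u$ is constant and equality in the refined Kato inequality then forces $\nabla\xi\equiv0$, so $\xi$ is parallel. The bound $1\leq b_k\leq\binom{n}{k}$ is immediate because parallel $k$-forms on an $n$-manifold form a subspace of $\Lambda^kT^*_xM$ of dimension at most $\binom{n}{k}$.

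The middle-degree case $k=n/2$ is more delicate: the refined Kato constant $C_{n,n/2}=(n+2)/n$ is strictly \emph{smaller} than the threshold $n/(n-2)$, so the dichotomy above closes trivially. The extra ingredient is the conformal invariance of harmonic $n/2$-forms, which allows me to pass to a Yamabe minimizer $\tilde g\in[g]$ (so that $R_{\tilde g}$ is constant); I would then argue that, whenever a nontrivial harmonic $n/2$-form exists, the pinching hypothesis together with the sharpness of the Sobolev inequality in $\tilde g$ forces all the above inequalities to saturate simultaneously, yielding both $\nabla\xi=0$ and the pointwise identity $r_{n/2}=\frac{n}{4(n-1)}R_{\tilde g}$ in $\tilde g$.

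For the equality case, assume $b_k\geq1$ and equality in~\eqref{eq:degrekcompact}. Tracing back through the chain forces equality at every step: Hölder's equality condition gives $r_k^{n/2}\propto u^{2n/(n-2)}$ (so $r_k$ is constant once $u$ is), equality in the Sobolev inequality forces $g$ (respectively $\tilde g$) to be a Yamabe minimizer with $u$ a Yamabe extremal, and pointwise Bochner-Weitzenböck combined with $\nabla\xi=0$ places $\xi$ in the kernel of $\mcR_k$, yielding the pointwise identity $r_k=\frac{k(n-k)}{n(n-1)}R_g$. The main technical obstacle will be the middle-degree case, in which the strict-inequality argument is unavailable and the rigidity must instead be extracted from the conformal change and a careful analysis of the equality cases of the Sobolev and Bochner-Weitzenböck inequalities.
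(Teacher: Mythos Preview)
Your chain of inequalities is set up correctly, but the crucial numerical claim is false: the coefficient
\[
C_{n,k}-\frac{4k(n-k)}{n(n-2)}=\frac{n+1-k}{n-k}-\frac{4k(n-k)}{n(n-2)}
\]
is \emph{not} positive on the whole range $1\le k\le\frac{n-3}{2}$. At $k=\frac{n-3}{2}$ one computes
$n(n-2)(n+5)-(n-3)(n+3)^2=27-n$, so the sign flips for odd $n\ge 29$ (e.g.\ $n=29$, $k=13$ already gives a strictly negative coefficient). Thus your argument only proves the theorem for small degrees relative to $n$, and breaks down precisely near the top of the stated range. The middle-degree sketch has the same defect: even after passing to a Yamabe minimizer, your chain yields $\bigl(\frac{n+2}{n}-\frac{n}{n-2}\bigr)\int|du|^2\le 0$, which is vacuous since the parenthesis is negative.

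The paper's remedy is to apply the Sobolev/Yamabe inequality not to $u=|\xi|$ but to the power $v=|\xi|^{p}$ with $p=\frac{n-1-k}{n-k}<1$. The point of this exponent is that the identity
\[
\Delta f_\ve^{p}=p\,f_\ve^{p-2}\Bigl(\langle\nabla^*\nabla\xi,\xi\rangle-|\nabla\xi|^2+(2-p)\,|df_\ve|^2\Bigr)
\]
together with $2-p=\frac{n+1-k}{n-k}$ exactly matches the refined Kato constant, so the Kato term is completely absorbed rather than competing with the Sobolev constant. One then arrives at
\[
\int_M|dv|^2+\frac{k(n-1-k)}{n(n-1)}\int_M R_g\,v^2\le \frac{n-1-k}{n-k}\int_M r_k\,v^2,
\]
and the left side is bounded below using the \emph{modified} Yamabe invariant $\Y_g(\beta)$ of Section~\ref{sec:yamabe} with $\beta=\frac{4k(n-1-k)}{n(n-2)}\in(0,1]$, together with the concavity inequality $\Y_g(\beta)\ge\beta\,\Y(M,[g])$ (Proposition~\ref{prop:EqualityYaModif}). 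This produces exactly $\|r_k\|_{L^{n/2}}\ge\frac{k(n-k)}{n(n-1)}\Y(M,[g])$ for every $k$ in the range, with no spurious constant to control. The equality analysis then comes for free: for $k\le\frac{n-3}{2}$ one has $\beta<1$, and Proposition~\ref{prop:EqualityYaModif} forces $v$ constant and $g$ a Yamabe minimizer; for $k=\frac n2$ one has $\beta=1$, so $v$ realises the Yamabe infimum and $\tilde g=v^{4/(n-2)}g$ is the required conformal change. Your test function $u=|\xi|$ cannot play this role.
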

According to \cite{GM}, for all $1\leq k \leq n-1$, we have $r_k\leq k(n-k)\rho_g$, thus \tref{thm:Gallot} is a direct consequence of this theorem. 

In dimension four, if we let $w_g^+$ be the largest eigenvalue of the self-dual part $W_g^+$ of the Weyl curvature and $b_2^+$ be the dimension of the self-dual harmonic $2$-forms, we obtain the following result:
\begin{thm}\label{thm:wplus4D}
If $(M^{4},g)$ is a compact oriented Riemannian manifold such that
\begin{equation}\label{eq:midegree4D}
\nnm{w_g^+}_{L^2}\leq \frac{1}{6} \Y(M^4,[g]),
\end{equation}
then 
\begin{itemize}
\item either  $b_{2}^+(M^4)=0$,
\item or equality holds in (\ref{eq:midegree4D}), $1\leq b_2^+\leq 3$ and for every self-dual harmonic 2-form $\omega$, there is a Yamabe minimizer $\tilde g$ in $[g]$ such that $\omega$  is K\"ahler for $\tilde g$.
\end{itemize}
\end{thm}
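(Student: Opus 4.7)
The plan is to apply the scheme of \tref{thm:bochnerkint} in the middle-degree case $n=4$, $k=n/2=2$, restricted to the self-dual part $\Lambda^+\subset \Lambda^2T^*M$. Since $W_g^+$ is traceless on $\Lambda^+$, the Bochner-Weitzenb\"ock curvature acting on self-dual $2$-forms is $\mcR_2^+=-2W_g^++\tfrac{R_g}{3}\Id$, its traceless part is $-2W_g^+$, and the lowest eigenvalue of that traceless part is $-2w_g^+$. The self-dual analogue of $r_k$ is therefore $2w_g^+$, and since $\tfrac{k(n-k)}{n(n-1)}=\tfrac13$ in our situation, the hypothesis $\nnm{w_g^+}_{L^2}\le \tfrac16\Y(M^4,[g])$ is exactly the analogue of the $L^{n/2}$-pinching of \tref{thm:bochnerkint}.

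Because $k=n/2=2$, harmonicity of $2$-forms is conformally invariant in dimension four, so any self-dual harmonic $\omega$ for $g$ remains harmonic for every $\tilde g\in[g]$. Taking such an $\omega$ and arguing as in the proof of \tref{thm:bochnerkint} for $k=n/2$, one combines the Bochner-Weitzenb\"ock identity
\[
\nabla^*\nabla\omega = 2W_g^+(\omega)-\tfrac{R_g}{3}\omega
\]
with the refined Kato inequality $|\nabla\omega|^2\ge\tfrac32 |\nabla|\omega||^2$ for self-dual harmonic $2$-forms (recalled in the section on refined Kato), H\"older, and the Sobolev inequality coming from the positivity of $\Y(M^4,[g])$. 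The pinching \eqref{eq:midegree4D} forces the resulting chain of inequalities either to be a chain of equalities or to yield $\omega\equiv 0$.

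In the equality case, saturation of H\"older forces $\omega$ to be an eigenform of $W_g^+$ for the largest eigenvalue $w_g^+$ pointwise; saturation of refined Kato forces $\omega$ to become parallel after the conformal change $\tilde g\in[g]$ that enters the Sobolev step; and saturation of the Sobolev/Yamabe step identifies $\tilde g$ as a Yamabe minimizer. A nonzero $\tilde g$-parallel self-dual $2$-form in dimension four has constant pointwise length, so after normalization $|\omega|_{\tilde g}=\sqrt 2$, the endomorphism $J$ defined by $\tilde g(JX,Y)=\omega(X,Y)$ satisfies $J^2=-\Id$, is $\tilde g$-orthogonal, and is $\tilde g$-parallel, whence $(M^4,\tilde g,J)$ is K\"ahler with K\"ahler form $\omega$. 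The bound $b_2^+\le 3$ is immediate from the injectivity of the evaluation map from $\tilde g$-parallel sections of $\Lambda^+$ to the $3$-dimensional fibre at any point.

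The main obstacle I expect is the conformal reduction to the Yamabe minimizer, specifically identifying the correct conformal factor that turns $\omega$ into a parallel form in the equality case. This is the middle-degree subtlety that already appears in \tref{thm:bochnerkint} for $k=n/2$: one must use that $\nnm{w_g^+}_{L^2}$ is a conformal invariant in dimension four (because $W_g^+$ viewed as a symmetric operator on $\Lambda^+$ scales by $e^{-2u}$ under $\tilde g=e^{2u}g$ while the volume form scales by $e^{4u}$), and match this invariance with the $L^{n/2}$ pinching. Once this reduction is in place, the Bochner-Sobolev-H\"older argument applied to the rank-$3$ bundle $\Lambda^+$ proceeds exactly as in \tref{thm:bochnerkint}, and the passage from ``parallel'' to ``K\"ahler'' is the standard correspondence between nondegenerate parallel self-dual $2$-forms and K\"ahler structures.
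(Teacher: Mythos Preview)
Your proposal is correct and follows the paper's approach: restrict the integral Bochner--Kato--Yamabe argument of \tref{thm:bochnerkint} to $\Lambda^+$ (using $r_2^+=2w_g^+$ so that \eqref{eq:midegree4D} is exactly the self-dual instance of \eqref{eq:degrekcompact} with $\beta=1$), and in the equality case pass to the Yamabe minimizer $\tilde g=v^{2}g$ with $v=|\omega|^{1/2}$, for which $\omega$ has constant $\tilde g$-length, is $\tilde g$-parallel, and hence (after normalizing to $|\omega|_{\tilde g}^2=2$) is a K\"ahler form. One small clarification on your equality analysis: the eigenform condition does not come from H\"older; rather, the paper observes that after the conformal change H\"older equality forces $r_2^+(\tilde g)$ to be constant and equal to $\tfrac13 R_{\tilde g}$, so the pointwise hypothesis of \tref{thm:bochnerk} holds for $\tilde g$ and parallelism (for \emph{every} self-dual harmonic form, giving $b_2^+\le 3$) follows directly from the classical Bochner theorem.
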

Conversely, according to \cite{derdzinski}, for any metric conformally equivalent to one which is Yamabe and K\"ahler, equality holds in (\ref{eq:midegree4D}).

\subsection{Examples of manifolds for which equality holds in (\ref{eq:degrekcompact}).}
Equality holds in \eqref{eq:degrekcompact} for any metric  with nonnegative $\mcR_k$ which is a positive Yamabe minimizer, as soon as $b_k\geq 1$.  According to \cite{GM}, we can construct examples of manifolds with nonnegative $\mcR_k$ by taking products of manifolds with nonnegative curvature operators. According to \cite[IV.2]{BE}, if the product is an Einstein manifold, it will be a Yamabe minimizer.

Let $(M^n,g)$ be a product of round spheres and projective spaces
$$(\mathbb S^{n_1},g_1)\times\cdots\times(\mathbb S^{n_p},g_p)\times(\mathbb \bP^{m_1}(\C),h_1)\times\cdots\times(\bP^{m_q}(\C),h_q),$$
with $n_i\geq 2$. Then $(M,g)$ has a  nonnegative curvature operator. For $(M,g)$ to be Einstein, we have to take $R_{g_i}=\alpha\frac{n_i}{n}$ and $R_{h_i}=\alpha\frac{2m_i}{n}$ for some $\alpha>0$. 

If for some $0\leq p'\leq p$ and $0\leq m'_j\leq m_j$ 
$$\sum_{i=1}^{p'} n_i+2\sum_{j=1}^{q} m'_j=k,$$
then $b_k\geq 1$ and equality holds in (\ref{eq:degrekcompact}). Hence, for all $k\geq 2$, there exist manifolds for which equality holds in (\ref{eq:degrekcompact}).

For $k=1$, according to \cite{schoen}, the quotients of $\bS^{n-1}\times\R$ by a group of transformations generated by isometries of $S^{n-1}$ and a translation of parameter $T>0$ are Yamabe minimizing if and only if $T^2\leq \frac{4\pi^2}{n-2}$. For those manifolds, the equality holds in (\ref{eq:degrekcompact}) and in (\ref{eq:degre1compactnorm}).

\subsection{The Bochner-Weitzenb\"ock curvature in more details.}\label{ssec:BWeq}
The symmetric operator $\mcR_k$ can be seen as a double form of degree $(k,k)$ (see \cite{kulkarni, bourguignon2, Lab05, Lab06}). Using the fact that the curvature can be seen as a double form of degree $(2,2)$, the fact that the metric can be seen as a $(1,1)$-form and the fact that the wedge product induces an algebra structure on the space of double forms (the Kulkarni-Nomizu product), there is a convenient way to write the Bochner-Weitzenb\"ock curvature in degree $k\in [2,n/2]$:
\begin{equation*}
\mcR_k=\mcW_k+\mcZ_k+\mcS_k 
\end{equation*}
with
\begin{equation*}
\mcW_k=-2\frac{g^{k-2}}{(k-2)!}\,\textbf{.}\,W_g,\quad
\mcZ_k=\frac{n-2k}{n-2}\frac{g^{k-1}}{(k-1)!}\,\textbf{.}\,\rstg,\quad
\mcS_k=\frac{k(n-k)}{n(n-1)}R_g\,\Id_{\Lambda^kT^*M}
\end{equation*}
where $\bdot$ is the product on double-forms, $g^j$ is the metric to the power $j$ with respect to this product, $W_g$ is the Weyl curvature, and $\rstg=\Ric_g-\frac{R_g}{n}g$ is the traceless part of the Ricci curvature. On $1$-forms, we have
\begin{equation*}
\mcR_1=\Ric_g=\rstg+\frac{R_g}{n}g.
\end{equation*}
\section{Comparison between the first eigenvalue and the norm of curvature operators}\label{sec:eigenormineq}
In order to obtain estimates on $r_k$, we will use the following lemma:
\begin{lem}\label{lem:eigenendo}
If $A : E\rightarrow E$ is a traceless self-adjoint endomorphism on a Euclidean space $E$ of dimension $d$, then its lowest eigenvalue $a$ satisfies
$$a^2\le \frac{d-1}{d}\nm{A}^2,$$
and equality holds if and only if the spectrum of $A$ is $\{-\nu,\frac{1}{d-1}\nu\}$ with $\nu\geq 0$ and $\frac{1}{d-1}\nu$ of multiplicity $d-1$.
\end{lem}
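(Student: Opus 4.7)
The plan is to diagonalize $A$ and reduce the inequality to a scalar Cauchy--Schwarz inequality. Let $\lambda_1\leq \lambda_2\leq\cdots\leq \lambda_d$ be the eigenvalues of $A$ (repeated with multiplicity), so that $a=\lambda_1$, $\sum_i \lambda_i=0$ because $A$ is traceless, and $\nm{A}^2=\sum_i \lambda_i^2$.

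The traceless condition gives $-\lambda_1=\sum_{i=2}^{d}\lambda_i$. By the Cauchy--Schwarz inequality applied to the vectors $(\lambda_2,\dots,\lambda_d)$ and $(1,\dots,1)$ in $\R^{d-1}$, one has
\begin{equation*}
\lambda_1^2=\left(\sum_{i=2}^{d}\lambda_i\right)^{\!2}\leq (d-1)\sum_{i=2}^{d}\lambda_i^2=(d-1)\bigl(\nm{A}^2-\lambda_1^2\bigr).
\end{equation*}
Rearranging yields $d\,\lambda_1^2\leq (d-1)\nm{A}^2$, which is the desired inequality.

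For the equality case, Cauchy--Schwarz is sharp exactly when $(\lambda_2,\dots,\lambda_d)$ is proportional to $(1,\dots,1)$, i.e.\ $\lambda_2=\cdots=\lambda_d=:\mu$. Combined with $\lambda_1+(d-1)\mu=0$, this forces $\mu=-\lambda_1/(d-1)$. Finally, the condition $\lambda_1\leq \mu$ (so that $\lambda_1$ really is the smallest eigenvalue) becomes $\lambda_1\leq -\lambda_1/(d-1)$, which, since $d\geq 2$, is equivalent to $\lambda_1\leq 0$. Writing $\nu=-\lambda_1\geq 0$ recovers exactly the spectrum $\{-\nu,\tfrac{1}{d-1}\nu\}$ described in the statement, with $\tfrac{1}{d-1}\nu$ of multiplicity $d-1$.

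There is no substantial obstacle here: the result is essentially a one-line consequence of Cauchy--Schwarz once the trace constraint is exploited, and the equality discussion is just tracking when Cauchy--Schwarz and the ordering of the eigenvalues are both tight.
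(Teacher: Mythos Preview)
Your proof is correct. The paper's own proof is a one-line remark that the constrained minimum $\inf\{\lambda_1:\ \sum\lambda_i^2=1,\ \sum\lambda_i=0\}$ can be computed by a Lagrange multiplier argument, without further details. Your approach via Cauchy--Schwarz on $(\lambda_2,\dots,\lambda_d)$ is a genuinely different (and more elementary) route: it avoids calculus entirely and makes the equality case completely transparent, since the Cauchy--Schwarz equality condition immediately forces $\lambda_2=\cdots=\lambda_d$. The Lagrange multiplier computation would of course arrive at the same extremal configuration, but your argument gets there with less machinery.
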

\begin{proof}
By a simple Lagrange multiplier argument, we see that:
  \begin{equation*}
\left(\inf\Bigl\{\lambda_1\,,\; \sum_{i=1}^d \lambda_i^2=1\;\text{ and }\; \sum_{i=1}^d \lambda_i=0\Bigr\}\right)^2=\frac{d-1}{d}
\end{equation*}
\end{proof}

\noindent For $1\leq k\leq \frac{n-1}{2}$, let define the constants $a_{n,k}$ and $b_{n,k}$ by
\begin{align*}
a_{n,k}&=\left(\binom{n}{k}-1\right)\frac{k(n-k)}{n(n-1)}\frac{4(k-1)(n-k-1)}{(n-2)(n-3)},\\
b_{n,k}&=\left(\binom{n}{k}-1\right)\frac{k(n-k)}{n(n-1)}\frac{(n-2k)^2}{(n-2)^2}.
\end{align*}
\begin{lem}\label{lem:rk}
If $1\leq k\leq \frac{n-1}{2}$, then
\begin{equation*}
r_k^2\le a_{n,k} \nm{W}^2 + b_{n,k}\nmr{\rst}^2
\end{equation*}
and equality holds if and only if there exists a $k$-form $u$ and a real number $\lambda$ such that
$$\mcR_{k}=\lambda\Id- u\otimes u.$$
\end{lem}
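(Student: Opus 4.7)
My plan is to apply Lemma~\ref{lem:eigenendo} to the traceless part of $\mcR_k$ and then express everything explicitly in terms of $W$ and $\rst$ via the decomposition recalled in \S\ref{ssec:BWeq}.

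First, I would note that both $\mcW_k$ and $\mcZ_k$ are traceless endomorphisms of $\Lambda^k T^*_x M$: $\mcW_k$ because the Weyl tensor is totally trace-free, and $\mcZ_k$ because $\rstg$ is. Since $\mcS_k = \frac{k(n-k)}{n(n-1)}R_g \Id$, the formula $\mcR_k = \mcW_k+\mcZ_k+\mcS_k$ is the decomposition of $\mcR_k$ into its traceless part $\mcW_k+\mcZ_k$ and a multiple of the identity. Setting $d=\binom{n}{k}$, \lref{lem:eigenendo} immediately yields
\begin{equation*}
r_k^2 \le \frac{d-1}{d}\,\nnm{\mcW_k+\mcZ_k}^2.
\end{equation*}

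The next step is to check that $\mcW_k$ and $\mcZ_k$ are pointwise orthogonal as endomorphisms of $\Lambda^k T^*_x M$ (i.e.\ for the Hilbert--Schmidt inner product). This follows from the Kulkarni--Nomizu calculus of Labbi cited in \S\ref{ssec:BWeq}: $\mcZ_k$ is essentially $g^{k-1}\bdot \rstg$, so taking its Hilbert--Schmidt inner product with $\mcW_k \sim g^{k-2}\bdot W_g$ produces contractions of $W_g$ against a symmetric $2$-tensor, all of which vanish because $W_g$ is trace-free. Hence
\begin{equation*}
\nnm{\mcW_k+\mcZ_k}^2=\nnm{\mcW_k}^2+\nnm{\mcZ_k}^2.
\end{equation*}
The main technical step is then to compute these two norms in terms of $\nm{W}^2$ and $\nmr{\rst}^2$. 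A direct calculation in a local orthonormal frame (or equivalently the combinatorial identities for the Kulkarni--Nomizu product, see the references to \cite{Lab05, Lab06} in the paper) yields
\begin{equation*}
\nnm{\mcW_k}^2 = \binom{n}{k}\frac{k(n-k)}{n(n-1)}\cdot\frac{4(k-1)(n-k-1)}{(n-2)(n-3)}\,\nm{W}^2
\end{equation*}
and
\begin{equation*}
\nnm{\mcZ_k}^2 = \binom{n}{k}\frac{k(n-k)}{n(n-1)}\cdot\frac{(n-2k)^2}{(n-2)^2}\,\nmr{\rst}^2.
\end{equation*}
Multiplying by the prefactor $\frac{d-1}{d}=\frac{\binom{n}{k}-1}{\binom{n}{k}}$ gives exactly $a_{n,k}\nm{W}^2+b_{n,k}\nmr{\rst}^2$, which is the desired inequality. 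This norm computation is the only non-routine part; its delicacy is the reason for the condition $k\le \frac{n-1}{2}$ (ensuring $n-2k\ge 1$, so the factors appearing are well-defined and nontrivial) and explains the combinatorial shape of $a_{n,k}$ and $b_{n,k}$.

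For the equality case, equality in \lref{lem:eigenendo} requires the traceless part $\mcW_k+\mcZ_k$ to have exactly two eigenvalues, namely a simple eigenvalue $-\nu$ and the eigenvalue $\frac{\nu}{d-1}$ of multiplicity $d-1$. If $u\in\Lambda^k T^*_xM$ is a unit eigenvector for $-\nu$, writing $P_u=u\otimes u$, the endomorphism $\frac{\nu}{d-1}(\Id-P_u)-\nu P_u$ has precisely this spectrum, so $\mcW_k+\mcZ_k=\frac{\nu}{d-1}\Id-\frac{d\,\nu}{d-1}P_u$. After rescaling $u$ so that $\nm{u}^2=\frac{d\,\nu}{d-1}$ and adding back the scalar term $\mcS_k$, one obtains $\mcR_k=\lambda\Id-u\otimes u$ with $\lambda=\frac{\nu}{d-1}+\frac{k(n-k)}{n(n-1)}R_g$. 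Conversely, any such $\mcR_k$ has a traceless part of the required two-eigenvalue form, and the inequality is saturated. This is precisely the stated equality condition.
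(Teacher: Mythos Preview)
Your proof is correct and follows essentially the same approach as the paper: apply \lref{lem:eigenendo} to the traceless part $\mcW_k+\mcZ_k$ of $\mcR_k$, then compute $\nm{\mcW_k}^2$ and $\nm{\mcZ_k}^2$ via the Kulkarni--Nomizu calculus. The paper compresses the norm computation into the single identity $\nm{\tfrac{g^j}{j!}\bdot T}^2=\binom{n-2p}{j}\nm{T}^2$ for a traceless double $(p,p)$-form $T$ (citing \cite{Lab05}), which with $p=2,\,j=k-2$ and $p=1,\,j=k-1$ gives exactly your two displayed formulas after a binomial simplification; your treatment of the orthogonality of $\mcW_k$ and $\mcZ_k$ and of the equality case is more explicit than the paper's but entirely in line with it.
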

\begin{proof}
We apply \lref{lem:eigenendo} to  $\mcW_k+\mcZ_k$, and use the fact that for a traceless operator $T$ on $k$-forms
  \begin{equation*}
    \nm{\frac{g^j}{j!}\textbf{.}T}=\frac{1}{j!}\bigl\langle \ctr^j\frac{g^j}{j!}\textbf{.}T|T\bigr\rangle=\binom{n-2k}{j}\nm{T}^2,
  \end{equation*}
where $\ctr$ is the contraction operator defined in \cite{Lab05}.
\end{proof}
When $k=n/2$ we can refine this inequality by using the fact that the Hodge star operator commutes with $\mcR_{n/2}$ and the fact that the square of the Hodge star operator on $n/2$-forms is $(-1)^{n/2}\Id$.

Let $\Lambda_\pm^{n/2}T_x^*M$ be the eigenspaces of the Hodge star operator and $\mcR_{\pm,n/2}$ be the restriction of the Bochner-Weitzenb\"ock curvature to $\Lambda_\pm^{n/2}T_x^*M$.

We define
\begin{equation*}
a_{n,n/2}=
\begin{cases}
\frac{n(n-2)}{4(n-1)(n-3)}\left(\tbinom{n}{n/2}-2\right)&\text{if $n/2$ is even}\\
\frac{n(n-2)}{8(n-1)(n-3)}\left(\tbinom{n}{n/2}-2\right)&\text{if $n/2$ is odd}.
\end{cases}
\end{equation*}

\begin{lem} 
\begin{equation}\label{rn2}
 r_{n/2}^2\le a_{n,n/2}\,|W|^2
\end{equation}
and equality holds if and only if
\begin{itemize}
\item when $n/2$ is odd: there exists a $n/2$-form $u$ and a real number $\lambda$ such that
\begin{equation*}
\mcR_{n/2}=\lambda\Id-u\otimes u-*u\otimes *u,
\end{equation*}
\item when $n/2$ is even: there is $\ve\in\{-,+\}$ such that $W^{-\ve}=0$ and there exists a $n/2$-form $u$ such that $*u=\ve u$ and a real number $\lambda$ such that
\begin{equation*}
\mcR_{\ve,n/2}=\lambda\Id-u\otimes u.
\end{equation*}
\end{itemize}
\end{lem}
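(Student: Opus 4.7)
The plan is to refine the calculation of Lemma 3.2 in the middle degree $k=n/2$ by exploiting that the Bochner–Weitzenb\"ock curvature commutes with the Hodge star operator $*$. Since $n-2k=0$, the term $\mcZ_{n/2}$ vanishes, so the traceless part of $\mcR_{n/2}$ reduces to $\mcW_{n/2}$; consequently only $|W|^2$ can appear in the estimate. Using the same norm identity $\nnm{(g^j/j!)\bdot T}^2=\binom{n-2p}{j}\nnm{T}^2$ used in the proof of Lemma 3.2, applied to the $(2,2)$–form $T=W_g$ with $j=n/2-2$, we get $\nnm{\mcW_{n/2}}^2=4\binom{n-4}{n/2-2}\nm{W}^2$. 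An elementary binomial simplification will translate the abstract eigenvalue bound we derive on $\mcW_{n/2}$ into the claimed constant $a_{n,n/2}$.

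The case split follows from $*^2=(-1)^{n/2}$. When $n/2$ is \emph{odd}, $*$ is a real complex structure on $\Lambda^{n/2}T^{*}_{x}M$. Because $\mcW_{n/2}$ is self-adjoint and commutes with $*$, each of its eigenspaces is $*$-stable, hence even-dimensional. In particular the smallest eigenvalue $-r_{n/2}$ has multiplicity at least~$2$. The Lagrange multiplier argument of Lemma 3.1 can then be re-run with the additional constraint $\lambda_{1}=\lambda_{2}$: applying Cauchy–Schwarz to the remaining $d-2$ eigenvalues ($d=\binom{n}{n/2}$) and using $\sum\lambda_i=0$, $\sum\lambda_i^{2}=\nnm{\mcW_{n/2}}^{2}$ yields $r_{n/2}^{2}\leq\tfrac{d-2}{2d}\nnm{\mcW_{n/2}}^{2}$, which reduces to $a_{n,n/2}\nm{W}^{2}$ in the odd case. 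Equality forces the other $d-2$ eigenvalues to coincide, so $\mcW_{n/2}$ has exactly two eigenvalues, with the lowest one a rank-$2$ eigenspace of the form $\mathrm{span}\{u,*u\}$ (orthogonal since $\langle u,*u\rangle=\langle *u,**u\rangle=-\langle u,*u\rangle$). Rewriting the spectral decomposition and reabsorbing the scalar part $\mcS_{n/2}$ gives the stated form $\mcR_{n/2}=\lambda\Id-u\otimes u-*u\otimes *u$.

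When $n/2$ is \emph{even}, the projectors $\tfrac{1}{2}(\Id\pm *)$ split $\Lambda^{n/2}T^{*}_{x}M=\Lambda^{n/2}_{+}\oplus\Lambda^{n/2}_{-}$ into two subspaces of dimension $d/2$. Commutation with $*$ makes $\mcW_{n/2}$ block-diagonal with blocks $\mcW_{\pm,n/2}$, and a structural fact—namely that the middle-degree Weyl operator decomposes as $W=W^{+}+W^{-}$ with each $\mcW_{\pm,n/2}$ traceless on its subspace (generalizing the classical four-dimensional statement that $W^{+}$ and $W^{-}$ are individually trace-free)—allows Lemma 3.1 to be applied to each block of dimension $d/2$. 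This gives $r_{\pm}^{2}\leq \tfrac{d/2-1}{d/2}\nnm{\mcW_{\pm,n/2}}^{2}$, and since $r_{n/2}=\max(r_{+},r_{-})$ and $\nnm{\mcW_{\pm,n/2}}^{2}\leq\nnm{\mcW_{n/2}}^{2}$, taking the worse side yields $r_{n/2}^{2}\leq\tfrac{d-2}{d}\nnm{\mcW_{n/2}}^{2}=a_{n,n/2}\nm{W}^{2}$. Equality requires both $\nnm{\mcW_{-\varepsilon,n/2}}=0$ (i.e.\ $W^{-\varepsilon}=0$) and the equality case of Lemma 3.1 applied to $\mcW_{\varepsilon,n/2}$, which produces the claimed form $\mcR_{\varepsilon,n/2}=\lambda\Id-u\otimes u$ with $u\in\Lambda^{n/2}_{\varepsilon}$.

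The main obstacle is the structural claim used in the even case: that the splitting induced by $*$ restricts $\mcW_{n/2}$ to two \emph{individually traceless} blocks. In dimension four this is classical, but in higher even $n/2$ it must be deduced from the explicit expression $\mcW_{n/2}=-2\,(g^{n/2-2}/(n/2-2)!)\bdot W$ and the compatibility of the Kulkarni–Nomizu product with $*$, which in turn relies on the trace-free symmetries of $W$. Once this structural point is secured, the remainder of the proof is a bookkeeping exercise combining the identity $\nnm{\mcW_{n/2}}^{2}=4\binom{n-4}{n/2-2}\nm{W}^{2}$, the parity-dependent eigenvalue inequality, and the reduction $\binom{n-4}{n/2-2}/\binom{n}{n/2}=n(n-2)/(16(n-1)(n-3))$ to recover the constants $a_{n,n/2}$.
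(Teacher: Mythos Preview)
Your approach is exactly the paper's: exploit that $*$ commutes with $\mcW_{n/2}$, use $*^2=(-1)^{n/2}$ to force either even eigenvalue multiplicities ($n/2$ odd) or a block decomposition ($n/2$ even), and then rerun Lemma~3.1 on a space of effective dimension $d/2$. Your constant bookkeeping is correct.

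The one point you flag---that the blocks $\mcW_{\pm,n/2}$ are \emph{individually} traceless---is a genuine gap in your write-up but is easily closed. For $n=4$ it is the classical fact that $W^+$ and $W^-$ are each trace-free. For $n\ge 8$ with $n/2$ even, observe that $W\mapsto \trace(\mcW_{+,n/2})$ is a linear $SO(n)$-equivariant map from the space of Weyl-type tensors to $\R$; since the Weyl representation is irreducible under $SO(n)$ for $n\ge 5$, Schur's lemma forces this map to vanish, and then $\trace(\mcW_{-,n/2})=-\trace(\mcW_{+,n/2})=0$ as well. With this in hand, Lemma~3.1 applies to each block and your argument goes through verbatim. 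The paper's own proof is a two-line hint that suppresses this point entirely, so your more explicit treatment is if anything an improvement.
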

\begin{proof}
  When $n/2$ is odd, all the eigenspaces of the Bochner-Weitzenb\"ock curvature are stable by the Hodge star operator hence they come with an even multiplicity. And when $n/2$ is even we obtain that $r_{n/2}$ is less than the lowest eigenvalue of $\mcR_{\ve,n/2}$.
\end{proof}

\subsection*{Characterization of the equality case.}
An important feature of the Bochner-Weitzenb\"ock curvature is that it satisfies the first Bianchi identity. Seeing once again $\mcR_k$ as a symmetric operator
\begin{equation*}
\mcR_{k}\colon \Lambda^k T^*_xM\rightarrow \Lambda^k T^*_xM,
\end{equation*}
the first Bianchi identity asserts that if $(\theta_i)_i$ is an orthonormal basis of $(T^*_xM,g)$ then
\begin{equation*}
\forall \alpha\in \Lambda^{k-1} T^*_xM,\quad \sum_{i} \theta_i\wedge\mcR_{k}\left(\theta_i\wedge\alpha\right)=0.
\end{equation*}
We now assume that there exist a real number $\lambda$ and a $k$-form $u\in \Lambda^{k}T_x^*M$ such that
\begin{equation*}
\mcR_{k}=\lambda\Id-u\otimes u.
\end{equation*}
We get that for any orthonormal basis $(e_i)_i$, if we let $(\theta_i)_i$ be its dual basis, then (see \cite{kulkarni})
\begin{equation*}
\sum_i u\wedge \theta_i\otimes e_i\llcorner u=0.
\end{equation*}
We introduce the orthogonal decomposition $T_xM=V\oplus V^\perp$ where
\begin{equation*}
V^\perp=\{v, v\llcorner u=0\},
\end{equation*}
and choose an orthonormal basis $(e_i)_i$ of $T_xM$ diagonalizing the quadratic form
\begin{equation*}
v\mapsto |v\llcorner u|^2,
\end{equation*}
and such that $(e_i)_{1\le i\le\ell}$ is a basis of $V$. Then $\{e_i\llcorner u\}_{1\le i\le\ell}$ is an orthogonal family of $\Lambda^{k-1}T^*M$.

From the identity
\begin{equation*}
\sum_i u\wedge \theta_i\otimes e_i\llcorner u=0,
\end{equation*}
we deduce that $i\in\{1,\hdots,\ell\}\Rightarrow u\wedge \theta_i=0$. Hence $\ell=k$ and
\begin{equation*}
u=|u|\, \theta_1\wedge\dotsb\wedge \theta_k= |u| dv_V.
\end{equation*}
We can go one step further. Indeed if $k\in [2,\frac{n-1}{2}]$, the curvature operator is uniquely determined by the Bochner-Weitzenb\"ock curvature: the components of the curvature operator can be expressed by taking contractions of $\mcR_k$ (see \cite[Theorem~4.4]{Lab06}).

We first see that if $T_xM=V\oplus V^\perp$ and $u=dv_V$ then
$$\ctr(u\otimes u)=*_V g_V,$$
where $g_V$ is the metric on $V$ viewed as a double $(1,1)$-form on $V$ and 
$$*_V\colon \Lambda^{(1,1)}V^*\rightarrow  \Lambda^{(k-1,k-1)}V^*$$
is the Hodge star acting on double forms of $V$. The computations of \cite[theorem 4.4]{Lab06} imply that the traceless part of $\ctr^{k-1}(\mcR_k)$ is proportional to the traceless part of the Ricci curvature, hence the Ricci curvature is a linear combination of $g_V$ and $g_{V^\perp}$; we also get that $\ctr^{k-2}(\mcR_k)$ is a linear combination of $g\bdot Ric$, of $g^2$ and of the curvature operator. Hence in our case, we easily get that there are numbers $\alpha=\alpha(x)$, $\beta=\beta(x)$ and $\gamma=\gamma(x)$ such that the curvature operator at $x$ is
$$\alpha  \frac{g^2_V}{2}+\beta  \frac{g^2_{V^\perp}}{2}+\gamma \frac{g^2}{2}.$$
Hence, using the orthogonal decomposition
$$\Lambda^kT^*\tilde M=\bigoplus_{j=0}^k \Lambda^{k-j}V^*\otimes\Lambda^{j}\left(V^\perp\right)^*,$$
we find that the  eigenvalues of the Bochner-Weitzenb\"ock curvature $\mcR_k$ are
$$\alpha j(k-j)+\beta j(n-k-j)+\gamma k(n-k),$$
with multiplicity $\binom{k}{j}\binom{n-k}{j}$,
 where $j\in \{0,\hdots,k\}$. But the assumption asserts that $\mcR_k$  has only two eigenvalues and that the lowest one has multiplicity $1$. The only possible case is $k=2$ and $\alpha=(n-5)\beta$. Moreover, $\beta\geq 0$, since the lowest eigenvalue of the traceless part of $\mcR_k$ is a negative multiple of $\beta$.  Consequently we have:
\begin{prop}\label{prop:normsplit}
If there is a non-zero $k$-form $u$ such that
  $$\mcR_k(x)=\lambda Id-u\otimes u$$
then $k=2$ and $T_x M$ has an orthogonal decomposition
\begin{equation*}
 T_xM=V\oplus V^\perp,
\end{equation*}
with $V^\perp=\{v, v\llcorner u=0\}$ of codimension $2$. Moreover, $u$ is colinear to the volume form of $V$, and the curvature operator is of the form
$$(n-5)\beta \frac{g^2_V}{2}+\beta\frac{g^2_{V^\perp}}{2}+\gamma \frac{g^2}{2},$$
with $\beta\geq 0$.
\end{prop}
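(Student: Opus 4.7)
The plan is to exploit the first Bianchi identity for $\mcR_k$, use it to pin down the shape of $u$, then reconstruct the full curvature operator from $\mcR_k$ via iterated contractions, and finally match eigenvalue multiplicities in the resulting spectrum.

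Starting from $\mcR_k = \lambda\Id - u\otimes u$, the first Bianchi identity gives, in any orthonormal coframe $(\theta_i)$ dual to $(e_i)$,
\begin{equation*}
\sum_i u\wedge\theta_i \otimes (e_i\llcorner u) = 0.
\end{equation*}
Setting $V^\perp = \{v : v\llcorner u = 0\}$ and $V = (V^\perp)^\perp$, I diagonalize the symmetric form $v\mapsto |v\llcorner u|^2$ on $V$; then $\{e_i\llcorner u\}_{e_i\in V}$ is an orthogonal family, so the Bianchi identity projected on it forces $u\wedge\theta_i = 0$ for every $e_i\in V$. This yields $\dim V = k$ and $u = |u|\,dv_V$.

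For $k\in[2,(n-1)/2]$ the curvature operator is fully recoverable from $\mcR_k$ by iterated contractions (following \cite[Theorem~4.4]{Lab06}). Using the identity $\ctr(u\otimes u) = *_V g_V$, the traceless Ricci read off from $\ctr^{k-1}(\mcR_k)$ must be a linear combination of $g_V$ and $g_{V^\perp}$; one contraction lower, the full curvature takes the block form
\begin{equation*}
\Rm = \alpha\frac{g_V^2}{2} + \beta\frac{g_{V^\perp}^2}{2} + \gamma\frac{g^2}{2}
\end{equation*}
for some scalars $\alpha,\beta,\gamma$. The decomposition $\Lambda^k T_x^*M = \bigoplus_{j=0}^k \Lambda^{k-j}V^*\otimes \Lambda^j(V^\perp)^*$ then diagonalizes $\mcR_k$, with eigenvalue $\alpha j(k-j) + \beta j(n-k-j) + \gamma k(n-k)$ and multiplicity $\binom{k}{j}\binom{n-k}{j}$ on the $j$-th summand.

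The rank-one hypothesis forces exactly two distinct eigenvalues, with the smaller of multiplicity one, which can only be the $j=0$ eigenvalue (since that is the unique index yielding multiplicity $1$). Consequently all indices $j=1,\dots,k$ must share a single common eigenvalue, producing $k-1$ linear equations in $(\alpha,\beta)$. For $k\geq 3$ this system is overdetermined and forces $\alpha=\beta=0$, which reduces $\mcR_k$ to a multiple of the identity and contradicts the rank-one assumption; only $k=2$ survives, and the single remaining compatibility equation reduces to $\alpha=(n-5)\beta$. The sign $\beta\geq 0$ then comes from identifying the lowest eigenvalue of the traceless part of $\mcR_2$ as a negative multiple of $\beta$. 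The main obstacle I anticipate is the spectral bookkeeping in the last step — in particular verifying that for $k\geq 3$ the overdetermined linear system genuinely forces $\alpha=\beta=0$ rather than admitting a nontrivial solution through an arithmetic coincidence among the multiplicities $\binom{k}{j}\binom{n-k}{j}$.
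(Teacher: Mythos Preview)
Your proposal is correct and follows exactly the paper's own argument: first Bianchi to force $u=|u|\,dv_V$, then Labbi's iterated contractions to obtain the block form $\alpha\,g_V^2/2+\beta\,g_{V^\perp}^2/2+\gamma\,g^2/2$, and finally the eigenvalue count on $\bigoplus_j \Lambda^{k-j}V^*\otimes\Lambda^j(V^\perp)^*$ to conclude $k=2$ and $\alpha=(n-5)\beta$. Your anticipated obstacle is not a real one: for $k\ge 3$, equating the $j=1,2,3$ eigenvalues already yields two independent linear relations in $(\alpha,\beta)$ whose only solution (since $2k<n$) is $\alpha=\beta=0$, forcing $|u|^2=0$; the binomial multiplicities play no role in that linear system beyond isolating $j=0$ as the simple eigenvalue.
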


When ${n/2}$ is odd, we use the complex structure given by the Hodge star operator on $n/2$-forms and obtain:
\begin{prop}\label{prop:oddmidegnormsplit}
If ${n/2}$ is odd and if there is a non-zero $n/2$-form $u$ such that
$$\mcR_{n/2}=\lambda \Id-u\otimes u-*u\otimes *u,$$
then $T_x M$ has an orthogonal decomposition 
\begin{equation*}
 T_xM=V\oplus V^\perp,
\end{equation*}
with $V=\{v, v\wedge u=0\}$ and  $V^\perp=\{v, v\llcorner u=0\}$ of dimension $n/2$. 

Moreover, $u$ is colinear to the volume form of $V$ and $*u$ is colinear to the volume form of $V^\perp$.
\end{prop}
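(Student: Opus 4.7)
The plan is to mirror the proof of \pref{prop:normsplit}, handling the rank-two perturbation $u\otimes u+*u\otimes *u$ by Hodge duality. Applying the first Bianchi identity to $\mcR_{n/2}=\lambda\Id-u\otimes u-*u\otimes *u$ (the $\lambda\Id$-term dies because $\theta_i\wedge\theta_i=0$) yields, for every orthonormal basis $(e_i)_i$,
\begin{equation*}
\sum_i(\theta_i\wedge u)\otimes(e_i\llcorner u)+\sum_i(\theta_i\wedge *u)\otimes(e_i\llcorner *u)=0
\end{equation*}
in $\Lambda^{n/2+1}T^*_xM\otimes\Lambda^{n/2-1}T^*_xM$.

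Since $n/2$ is odd, $*^2=-\Id$ on $\Lambda^{n/2}T^*_xM$, which gives $u\wedge u=0$, $\langle u,*u\rangle=0$ and $|u|=|*u|$; the Hodge identity $*(\theta_i\wedge u)=(-1)^{n/2}(e_i\llcorner *u)$ then lets me apply $*$ to the first tensor factor and convert the Bianchi relation to the statement that $T=\sum_i(e_i\llcorner u)\otimes(e_i\llcorner *u)\in\Lambda^{n/2-1}\otimes\Lambda^{n/2-1}$ is symmetric. Moreover, polarising $|v\llcorner u|^2+|v\llcorner *u|^2=|v|^2|u|^2$ (which follows from $|v^\flat\wedge u|^2+|v\llcorner u|^2=|v|^2|u|^2$ together with $|v\llcorner *u|=|v^\flat\wedge u|$) shows that an orthonormal basis $(e_i)$ diagonalising $v\mapsto|v\llcorner u|^2$ with eigenvalues $\mu_i$ simultaneously diagonalises $v\mapsto|v\llcorner *u|^2$ with eigenvalues $|u|^2-\mu_i$, so both families $\{e_i\llcorner u\}_i$ and $\{e_i\llcorner *u\}_i$ are orthogonal in $\Lambda^{n/2-1}T^*_xM$.

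The heart of the argument is to force every $\mu_i$ into $\{0,|u|^2\}$. Pairing the Bianchi identity with $\theta_j\wedge u$ (resp.\ with $\theta_j\wedge *u$) produces relations of the form $(|u|^2-\mu_j)(e_j\llcorner u)=\sum_i\langle e_j\llcorner u,e_i\llcorner *u\rangle(e_i\llcorner *u)$ and its analogue with $u$ and $*u$ swapped; combining these with the symmetry of $T$, I expect to derive $\langle e_j\llcorner u,e_i\llcorner *u\rangle=0$ whenever $\mu_j=|u|^2$ or $\mu_i=0$, and then the Hodge-star correspondence rules out any intermediate eigenvalue. The basis then splits as $S_+=\{i:\mu_i=|u|^2\}\sqcup S_-=\{i:\mu_i=0\}$, and setting $V=\mathrm{span}\{e_i:i\in S_+\}$, $V^\perp=\mathrm{span}\{e_i:i\in S_-\}$ gives an orthogonal decomposition $T_xM=V\oplus V^\perp$ satisfying $V=\{v:v\wedge u=0\}$ and $V^\perp=\{v:v\llcorner u=0\}$. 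On each block the Bianchi identity reduces to the single-sum identity of \pref{prop:normsplit}, yielding $u\wedge\theta_i=0$ for $e_i\in V$ and $*u\wedge\theta_j=0$ for $e_j\in V^\perp$; thus $u$ and $*u$ are scalar multiples of the volume forms of $V$ and $V^\perp$ respectively, and the constraint $|u|=|*u|$ together with $\dim V+\dim V^\perp=n$ forces $\dim V=\dim V^\perp=n/2$.

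The main obstacle is ruling out intermediate eigenvalues $\mu_j\in(0,|u|^2)$. In \pref{prop:normsplit} there was a single sum in the Bianchi identity whose coefficients vanished individually by orthogonality; here two sums appear and could a priori interfere. It is the combination of the symmetry of $T$ with the Hodge-star identity tying the two sums together that provides the required rigidity, reducing the rank-two case to the single-sum case already handled on each of the two blocks $V$ and $V^\perp$.
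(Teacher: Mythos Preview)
Your proposal has a genuine gap at exactly the point you flag yourself: the step ``ruling out intermediate eigenvalues $\mu_j\in(0,|u|^2)$'' is never carried out, only hoped for. The pairing relations you write down do not obviously force the cross-terms $\langle e_j\llcorner u,e_i\llcorner *u\rangle$ to vanish, and the symmetry of $T$ together with the Hodge correspondence is not by itself enough to exclude intermediate eigenvalues without further input.

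The paper's argument bypasses this difficulty entirely by using the Hodge identity in a different, more structural way. You already know $V^\perp=\{v:\,v\llcorner u=0\}$; applying $*(w\llcorner u)=\pm\, w^\flat\wedge *u$ with $w\in V^\perp$ gives $w^\flat\wedge *u=0$ for every such $w$, hence $*u=\psi\wedge dv_{V^\perp}$ for some $(\ell-n/2)$-form $\psi$ on $V$, and consequently $u=\pm *_V\psi\in\Lambda^{n/2}V^*$. No eigenvalue analysis is needed: Hodge duality alone forces $u$ to live on $V$ and $*u$ to contain the full $V^\perp$-volume factor. With this in hand the Bianchi identity
\[
0=\sum_i u\wedge\theta_i\otimes e_i\llcorner u\ \pm\ \sum_i *u\wedge\theta_i\otimes e_i\llcorner *u
\]
has all terms with $i>\ell$ vanishing (since $e_i\llcorner u=0$ and $*u\wedge\theta_i=0$ for $e_i\in V^\perp$), and for $i\le\ell$ the second-slot vectors $e_i\llcorner u\in\Lambda^{n/2-1}V^*$ and $e_i\llcorner *u=(e_i\llcorner\psi)\wedge dv_{V^\perp}$ lie in orthogonal subspaces of $\Lambda^{n/2-1}T^*_xM$. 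Together with the internal orthogonality (your diagonalisation argument, applied to both $v\mapsto|v\llcorner u|^2$ and $v\mapsto|v\llcorner\psi|^2$), the whole family in the second tensor slot is orthogonal, so each first-slot coefficient vanishes separately. This gives $u\wedge\theta_i=0$ for all $i\le\ell$, forcing $\ell=n/2$ and $u$ colinear to $dv_V$; then $\psi$ is a scalar and $*u$ is colinear to $dv_{V^\perp}$.

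In short: rather than trying to squeeze the eigenvalues of $v\mapsto|v\llcorner u|^2$ to the endpoints, use the Hodge identity \emph{before} invoking Bianchi to pin down the supports of $u$ and $*u$; the two sums then decouple automatically and reduce to the single-sum case of \pref{prop:normsplit}.
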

Indeed, with the same orthogonal decomposition $T_xM=V\oplus V^\perp$ as before, with 
$$V^\perp=\{v, v\llcorner u=0\},$$
we get that for any vector $w\in V^\perp$,
$$w^\flat\wedge *u=*(w\llcorner u)=0.$$
Hence there is a $(\ell-n/2)$-form $\psi\in \Lambda^{\ell-n/2}V^*$ such that $*u=\psi\wedge dv_{V^\perp}$ and $u=*_V  \psi.$

The Bianchi identity implies that 
$$0=\sum_{i=1}^\ell *_V  \psi\wedge\theta_i\otimes e_i\llcorner *_V  \psi\pm 
\sum_{i=1}^\ell   \psi\wedge dv_{V^\perp} \wedge\theta_i\otimes e_i\llcorner( \psi\wedge dv_{V^\perp}).$$
Because $(e_i\llcorner *_V  \psi)_{1\le i\le\ell}\cup \{e_i\llcorner( \psi\wedge dv_{V^\perp}))_{1\le i\le\ell}$ is an orthogonal
family, we conclude that $\ell=n/2$ and $\psi=1$.

And when ${n/2}$ is even, we have:
\begin{prop}\label{prop:evenmidegnormsplit}
Assume that $n/2$ is even, that for $\ve \in \{-,+\}$ we have $W^{-\ve}=0$ and that there is a non-zero $n/2$-form $u$ such that $*u=\ve u$ and 
\begin{equation*}
\mcR_{\ve ,n/2}=\lambda\Id- u\otimes u.
\end{equation*}
Then $n=4$ and $u$ is colinear to $g(J.,.)$ where $J$ is an unitary complex structure on $T_xM$.
\end{prop}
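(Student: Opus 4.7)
The plan is to apply the algebraic first Bianchi identity to the (global) Bochner--Weitzenböck operator $\mcR_m$, where $m=n/2$, and to extract a dimensional obstruction that forces $n=4$.

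\textbf{Setting up the Bianchi equation.} Since $n=2m$, the term $\mcZ_m$ vanishes, so $\mcR_m=\mcW_m+c\,\Id$ with $c=\frac{m}{2(2m-1)}R$. The hypothesis $W^{-\varepsilon}=0$ says that $\mcW_m$ vanishes on $\Lambda^m_{-\varepsilon}$, hence $\mcR_{-\varepsilon,m}=c\,\Id$. Combining with $\mcR_{\varepsilon,m}=\lambda\Id-u\otimes u$, and writing $P_\varepsilon=(1+\varepsilon*)/2$, one gets on all of $\Lambda^m T_x^*M$
\[
\mcR_m=c\,\Id+(\lambda-c)P_\varepsilon-u\otimes u.
\]
Plugging this into the first Bianchi identity $\sum_i\theta_i\wedge\mcR_m(\theta_i\wedge\alpha)=0$ (for $\alpha\in\Lambda^{m-1}T^*_xM$), using $\sum_i\theta_i\wedge\theta_i\wedge\alpha=0$ and the identity $\sum_i\theta_i\wedge{*}(\theta_i\wedge\alpha)=(-1)^{m-1}(m+1)\,{*}\alpha$ (obtained from $*(\theta_i\wedge\alpha)=(-1)^{m-1}\iota_{e_i}(*\alpha)$), I would arrive at
\[
\frac{(-1)^{m-1}(\lambda-c)(m+1)\varepsilon}{2}\,{*}\alpha \;=\; v_\alpha^\flat\wedge u, \qquad v_\alpha^\flat:=\sum_i\langle u,\theta_i\wedge\alpha\rangle\,\theta_i.
\]

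\textbf{Reduction to $n=4$.} As $\alpha$ ranges over $\Lambda^{m-1}$, the left-hand side sweeps out all of $\Lambda^{m+1}T_x^*M$, of dimension $\binom{2m}{m-1}$, whereas the right-hand side is confined to the subspace $\{\xi\wedge u:\xi\in T_x^*M\}$ of dimension at most $2m$. So if $\lambda\neq c$ one needs $\binom{2m}{m-1}\le 2m$; for $m$ even this fails whenever $m\ge 4$ (already $\binom{8}{3}=56>8$), forcing $\lambda=c$ in those dimensions. In the equal case, $\mcR_m=\lambda\Id-u\otimes u$ on all of $\Lambda^m$, and the Bianchi-plus-diagonalisation argument in the first part of the proof of \pref{prop:normsplit} (which uses only the Bianchi identity and the diagonalisation of $v\mapsto|v\llcorner u|^2$, without the restriction $k\le (n-1)/2$) applies to give $u=|u|\,dv_V$ for an $m$-dimensional subspace $V\subset T_xM$. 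But then $*u=\pm|u|\,dv_{V^\perp}$ is linearly independent from $u=|u|\,dv_V$, contradicting $*u=\varepsilon u$. Therefore $m=2$, i.e.\ $n=4$.

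\textbf{Kähler identification and main obstacle.} In dimension four, any nonzero self-dual $2$-form $u$ can be brought, in an appropriately oriented $g$-orthonormal basis, to the form $u=\frac{|u|}{\sqrt 2}(\theta^1\wedge\theta^2+\varepsilon\,\theta^3\wedge\theta^4)$; the endomorphism $J$ defined by $g(JX,Y)=\frac{\sqrt 2}{|u|}u(X,Y)$ then satisfies $J^2=-\Id$ and $g(J\cdot,J\cdot)=g$, which exhibits $u$ as a scalar multiple of $g(J\cdot,\cdot)$. The decisive step is the Bianchi-induced dimension count: one must arrange the Bianchi identity so that the rank-one perturbation $u\otimes u$ is isolated against the non-trivial Hodge-star piece $(-1)^{m-1}(m+1)*\alpha$, and then recognise that the inequality $\binom{2m}{m-1}\le 2m$ holds only at $m=2$; after that the self-duality $*u=\varepsilon u$ readily finishes the argument by ruling out the decomposable form $u=|u|\,dv_V$ that would otherwise be produced in higher dimensions.
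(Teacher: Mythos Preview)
Your argument is correct and follows essentially the same route as the paper: write $\mcR_m = c\,\Id + (\lambda-c)P_\varepsilon - u\otimes u$, apply the first Bianchi identity, and use the resulting dimension obstruction $\binom{2m}{m-1}\le 2m$ to force $m=2$; the paper phrases the same count as ``$(e_i\llcorner u)_i$ is a basis of $\Lambda^{m-1}T^*_xM$''. One simplification you may want to incorporate: your case $\lambda=c$ is in fact vacuous, since comparing traces on $\Lambda^m$ (using that $\mcW_m$ is traceless) gives $\lambda-c = 2|u|^2/\binom{2m}{m}>0$, so the dimension count applies directly and the detour through the decomposable case $u=|u|\,dv_V$ is unnecessary.
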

Indeed, we obtain that the Bianchi operator applied to $u\otimes u$ is a multiple of the Bianchi operator
applied to the Hodge star operator. But the Bianchi operator applied to the Hodge star operator is a multiple of 
the Hodge star operator. Hence, if $(e_i)$ is a orthonormal basis of $T_xM$ then 
$(e_i\llcorner u)_{i}$ is a basis of $\Lambda^{n/2-1}T^*_xM$. 

This can only occur when $n=4$ and when $u=|u| g(J.,.)$, where $J$ is an unitary complex structure on $T_xM$.

\section{The Yamabe invariant}\label{sec:yamabe}

We recall that when $M$ is closed, there always exists a positive smooth function $\vp$ such that
\begin{equation}\label{eq:Yamabe}
\int_M \left[\frac{4(n-1)}{n-2}|d\vp|^2+R_g\vp^{2}\right]dv_g=\Y(M,[g]),\,\, \mathrm{and}\,\, \int_M \vp^{\frac{2n}{n-2}}dv_g=1.
\end{equation}

Moreover, since $C_0^\infty$ is dense in $H_1^2(M)$ (see \cite{Aub98}), the infimum defining the Yamabe invariant can also be taken over $H_1^2(M)$, and any function $\vp\in H_1^2(M)$ with $\nnm{\vp}_{L^{\frac{2n}{n-2}}}=1$ attaining the infimum is smooth, positive and solution to the Yamabe equation:
\begin{equation}\label{eq:YamEq}
 \frac{4(n-1)}{n-2}\Delta_g \vp+R_g \vp = \Y(M,[g])\vp^{\frac{n+2}{n-2}}
\end{equation}
(see \cite{Aub98}). We can also write that equation 
\begin{equation*}
 L_g(\vp)=\Y(M,[g])\vp^{\frac{n+2}{n-2}},
\end{equation*}
where $L_g$ denotes the conformal laplacian
\begin{equation*}
L_g=\frac{4(n-1)}{n-2}\Delta_g+R_g,
\end{equation*}
and satisfies the following conformal covariance property: if $\tilde g=u^{\frac{4}{n-2}}g$, with $u$ a smooth positive function, then
\begin{equation*}
u^{\frac{n+2}{n-2}}L_{\tilde g}(\vp)=L_g(u\vp).
\end{equation*}
It follows in particular that
\begin{equation*}\label{eq:scalarChange}
u^{\frac{n+2}{n-2}}R_{\tilde g}=\frac{4(n-1)}{n-2}\Delta_gu+ R_gu.
\end{equation*}
Therefore, if $u$ is a positive smooth solution of \eqref{eq:YamEq}, then the metric $\tilde{g}=u^{\frac{4}{n-2}} g$ is a Yamabe minimizer.

\subsection{The modified Yamabe invariant}
For $\beta\ge 0$, we introduce the modified Yamabe invariant:

\begin{equation}\label{eq:yamabeModif}
\Y_g(\beta):=\inf_{\substack{\vp\in C_0^\infty(M)\\ \vp\neq 0}} \frac{\int_M \left(\frac{4(n-1)}{n-2}\nm{d\vp}^2+\beta R_g\vp^{2}\right)dv_g}{
\left(\int_M \vp^{\frac{2n}{n-2}}dv_g\right)^{\frac{n-2}{n}}}.
\end{equation}
In particular, for $\beta=1$, this modified Yamabe invariant  is the Yamabe invariant:
$$\Y_g(1)=\Y(M,[g]).$$
The function $\beta\to \Y_g(\beta)$ is an infimum of affine functions of $\beta$, hence it is concave and for all $\beta\in \left[0,1\right]$ we obtain
\begin{equation*}
\left(1-\beta\right) \Y_g(0)+\beta \Y(M,[g])\le \Y_g(\beta).
\end{equation*}
When $(M,g)$ is closed, $\Y_g(0)=0$ and we have:
\begin{prop}\label{prop:EqualityYaModif}
If $(M^n,g)$ is a closed Riemannian manifold, then
\begin{equation}\label{eq:YaModif}
\beta \Y(M,[g])\leq \Y_g(\beta).
\end{equation}
If $\beta\in \left(0,1\right)$, then equality holds in this inequality if and only if $g$ is a Yamabe minimizer, and the only functions attaining the infimum in \eqref{eq:yamabeModif} are constant functions.
\end{prop}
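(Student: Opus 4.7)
My approach is to exploit an affine decomposition of the functional in $\beta$. Write
\[
 F_\beta(\vp) := \frac{\int_M \left(\frac{4(n-1)}{n-2}|d\vp|^2 + \beta R_g \vp^2\right) dv_g}{\left(\int_M \vp^{2n/(n-2)} dv_g\right)^{(n-2)/n}} = (1-\beta)\, A(\vp) + \beta\, B(\vp),
\]
where $A := F_0$ and $B := F_1$ is the Yamabe quotient. Since $A(\vp) \geq 0$ (with equality precisely when $\vp$ is constant) and $B(\vp) \geq \Y(M,[g])$ by definition of the Yamabe invariant, taking the infimum over $\vp$ recovers the inequality \eqref{eq:YaModif} already noted in the paragraph preceding the proposition.

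For the forward direction of the equality case, suppose $\Y_g(\beta) = \beta \Y(M,[g])$ for some $\beta \in (0,1)$, and let $(\vp_k)$ be a minimizing sequence of nonnegative functions (which is allowed by Kato's inequality) normalized by $\int_M \vp_k^{2n/(n-2)} dv_g = 1$. The decomposition gives
\[
 (1-\beta)\, A(\vp_k) + \beta\, (B(\vp_k) - \Y(M,[g])) = F_\beta(\vp_k) - \beta \Y(M,[g]) \longrightarrow 0,
\]
which writes a sum of two nonnegative quantities tending to zero. Since both $\beta$ and $1-\beta$ are strictly positive, each summand vanishes separately: $A(\vp_k) \to 0$ and $B(\vp_k) \to \Y(M,[g])$.

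From $\int_M |d\vp_k|^2 dv_g \to 0$ together with the normalization, the Sobolev--Poincar\'e inequality forces $\vp_k \to c_\infty := \vol(M)^{-(n-2)/(2n)}$ in $L^{2n/(n-2)}$, and hence in $L^2$. Passing to the limit in $B(\vp_k) = \frac{4(n-1)}{n-2}\int_M |d\vp_k|^2 dv_g + \int_M R_g \vp_k^2\, dv_g$, using the boundedness of $R_g$, yields
\[
 \Y(M,[g]) = c_\infty^2 \int_M R_g\, dv_g = \frac{\int_M R_g\, dv_g}{\vol(M)^{(n-2)/n}},
\]
which is exactly the defining property that $g$ be a Yamabe minimizer. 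Moreover, if some $\vp$ actually attains the infimum in \eqref{eq:yamabeModif}, the identical computation applied directly to $\vp$ forces $A(\vp) = 0$, hence $d\vp = 0$ and $\vp$ is constant.

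The converse is immediate: if $g$ is a Yamabe minimizer, then $F_\beta(c_\infty) = \beta \frac{\int_M R_g\, dv_g}{\vol(M)^{(n-2)/n}} = \beta \Y(M,[g])$, so $\Y_g(\beta) \leq \beta \Y(M,[g])$, which combined with \eqref{eq:YaModif} is equality; the same argument as above shows any minimizer must then be constant. The key point of the proof -- and the reason the rigidity genuinely fails at the endpoints -- is that the strict positivity of both $\beta$ and $1-\beta$ is used to split the equality constraint into two independent conditions: $A(\vp) = 0$, which pins down $\vp$ to be constant, and $B(\vp) = \Y(M,[g])$, which pins down the conformal representative.
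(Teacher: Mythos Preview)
Your proof is correct and rests on the same core idea as the paper's: the affine decomposition $F_\beta = (1-\beta)A + \beta B$ with $A\ge 0$ and $B\ge \Y(M,[g])$, so that equality at an interior $\beta$ forces both $A=0$ and $B=\Y(M,[g])$ separately. The paper phrases this via concavity of $\beta\mapsto \Y_g(\beta)$ and the support-line picture, but once unpacked it is exactly your two-term splitting.

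The one genuine addition in your argument is the minimizing-sequence step: the paper's proof, read literally, only shows that \emph{if} some $u$ attains the infimum in \eqref{eq:yamabeModif} then $u$ is constant and $g$ is a Yamabe minimizer, whereas you deduce that $g$ is a Yamabe minimizer from the equality $\Y_g(\beta)=\beta\,\Y(M,[g])$ alone, without assuming a minimizer exists. This is a mild strengthening (harmless for the paper's applications, where a concrete $v$ attaining the infimum is always in hand), and your Sobolev--Poincar\'e argument to pass to the limit is sound.
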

\begin{proof}
Since $\beta\to \Y_g(\beta)$ is concave, it is equal to its chord $\beta \Y(M,[g])$ at an interior point $\beta\in (0,1)$ if and only if it is affine. Then, if for some $u$ and some $\beta \in (0,1)$, equality is attained in \eqref{eq:yamabeModif}, the affine function of $\beta$ corresponding to $u$ is above the function $\Y_g$ and is equal to $\Y_g(\beta)$ at $\beta$, hence it must be equal to $\Y_g$ on $[0,1]$. Therefore the function $u$ realizes the infimum in \eqref{eq:yamabeModif} for all $\beta\in [0,1]$. Taking $\beta=0$ yields $\int_M \nm{d u}^2dv_g=0$, hence $u$ is constant. Then, taking $\beta=1$ shows that $g$ is a Yamabe minimizer.
\end{proof}

\subsection{The Yamabe invariant on complete non-compact manifolds}
We still define the Yamabe invariant by
\begin{equation*}
\Y(M,g):=\inf_{\substack{\vp\in C_0^\infty(M)\\ \vp\neq 0}} \frac{\int_M\left(\frac{4(n-1)}{n-2}\nm{d\vp}^2+R_g\vp^{2}\right)dv_g}{
\left(\int_M \vp^{\frac{2n}{n-2}}dv_g\right)^{\frac{n-2}{n}}}
\end{equation*}
and we still have for any smooth function $u$:
\begin{equation*}
\Y(M,g)=\Y(M,e^{2u}g).
\end{equation*}
The Yamabe functional
\begin{equation*}
\mathcal F_g(\vp)=\frac{\int_M\left(\frac{4(n-1)}{n-2}\nm{d\vp}^2+R_g\vp^{2}\right)dv_g}{
\left(\int_M \vp^{\frac{2n}{n-2}}dv_g\right)^{\frac{n-2}{n}}}
\end{equation*}
is also well-defined when $R_g$ is in $L^{\frac n2}(M,g)$, $\vp$ is in $L^{\frac{2n}{n-2}}(M,g)$ and  $d\vp$ is in $ L^2(M,g)$. Moreover, when $g$ is complete, $C_0^\infty(M)$ is dense in the space
\begin{equation*}
H= \{\vp\in L^{\frac{2n}{n-2}}(M,g),\ \nm{d\vp}\in L^2(M,g)\}.
\end{equation*}
Therefore, we also have
\begin{equation*}
\Y(M,[g])=\inf_H \mathcal F_g,
\end{equation*}
and any function in $H$ with $\nnm{\vp}_{L^{\frac{2n}{n-2}}}=1$ attaining the infimum is a weak solution to the Yamabe equation \eqref{eq:YamEq}. If in addition $\vp$ is in $C^{0,\alpha}$, then by classical regularity theorems $\vp$ is smooth, and by maximum principle it is positive (see \cite{Aub98}). 
  
The following Lemma is inspired by \cite[Proposition 2.3]{carher}:
\begin{lem}\label{lem:nonCompactSobolev}
If $(M,g)$ is a complete non-compact Riemannian manifold with a positive Yamabe constant and scalar curvature in $L^{\frac n2}$, then it has infinite volume and satisfies a Sobolev inequality
\begin{equation}\label{eq:Sobolev}
 \nnm{\vp}_{L^{\frac{2n}{n-2}}}^2\leq C\nnm{d \vp}^2_{L^2},
\end{equation}
for some $C>0$ and for all $\vp\in C_0^\infty(M)$.
\end{lem}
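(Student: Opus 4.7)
The plan is to reduce the desired Sobolev inequality in $(M,g)$ to one in a conformal representative with vanishing scalar curvature, where the positive Yamabe invariant directly controls the Sobolev constant; the infinite volume statement will then follow by a standard cutoff argument applied to the inequality.

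The central step is the construction of a smooth positive function $u$ on $M$ satisfying $L_g u = 0$ and bounded above and below by positive constants. To achieve this, I would exhaust $M$ by relatively compact open sets $\Omega_k \nearrow M$ with smooth boundary and solve on each $\Omega_k$ the Dirichlet problem $L_g u_k = 0$ with $u_k = 1$ on $\partial\Omega_k$; writing $u_k = 1 + v_k$ reduces this to $L_g v_k = -R_g$ with $v_k \in H_0^1(\Omega_k)$. Coercivity of $L_g$ on $H_0^1(\Omega_k)$, hence unique solvability, is guaranteed by $\Y(M,[g]) > 0$, while positivity of $u_k$ follows from a maximum principle argument applied after a preliminary conformal rescaling that renders the zeroth-order coefficient nonnegative. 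The scale-critical integrability $R_g \in L^{n/2}$ is then used to derive uniform $L^\infty$-bounds $\nnm{v_k}_{L^\infty} \leq \tfrac{1}{2}$ via De Giorgi--Moser iteration on this linear equation with borderline zeroth-order coefficient. Passing to a subsequence and invoking local elliptic regularity, one obtains convergence $u_k \to u$ in $C^1_{\mathrm{loc}}$ to a smooth positive function $u$ on $M$ with $L_g u = 0$ and $\tfrac{1}{2} \leq u \leq \tfrac{3}{2}$.

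Setting $\tilde g := u^{4/(n-2)} g$, the conformal change formula gives $R_{\tilde g} = u^{-(n+2)/(n-2)} L_g u = 0$. By conformal invariance $\Y(M,[\tilde g]) = \Y(M,[g]) > 0$, and testing the definition of $\Y(M,[\tilde g])$ against $\psi \in C_0^\infty(M)$ yields the Sobolev inequality
\begin{equation*}
 \Y(M,[g]) \, \nnm{\psi}_{L^{2n/(n-2)}(\tilde g)}^2 \leq \tfrac{4(n-1)}{n-2} \nnm{d\psi}_{L^2(\tilde g)}^2.
\end{equation*}
The identities $dv_{\tilde g} = u^{2n/(n-2)} dv_g$ and $|d\psi|^2_{\tilde g} \, dv_{\tilde g} = u^2 |d\psi|^2_g \, dv_g$, combined with the two-sided bound on $u$, make these norms uniformly equivalent to their $g$-counterparts and transfer the inequality to $(M,g)$, producing \eqref{eq:Sobolev}.

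For infinite volume I would argue by contradiction: if $\vol(M,g) < \infty$, pick smooth cutoffs $\varphi_k \colon M \to [0,1]$ equal to $1$ on $B(x_0,k)$, supported in $B(x_0,2k)$, with $|d\varphi_k|_g \leq 2/k$ (available by completeness). The Sobolev inequality then gives
\begin{equation*}
 \vol(B(x_0,k))^{(n-2)/n} \leq \nnm{\varphi_k}_{L^{2n/(n-2)}}^2 \leq C \nnm{d\varphi_k}_{L^2}^2 \leq \tfrac{4C}{k^2} \vol(M,g),
\end{equation*}
whose right-hand side tends to $0$ as $k \to \infty$ while the left-hand side tends to $\vol(M,g)^{(n-2)/n} > 0$, a contradiction. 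The main obstacle in the overall argument is the uniform $L^\infty$-control on the correction $v_k$ in the Dirichlet problem: it is precisely here that the hypotheses $\Y(M,[g]) > 0$ and $R_g \in L^{n/2}$ enter essentially, through the combination of coercivity and scale-critical Moser iteration with a zeroth-order coefficient in the borderline Serrin class.
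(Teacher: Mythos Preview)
Your approach is genuinely different from the paper's, and conceptually reasonable, but the step you yourself flag as ``the main obstacle'' is a real gap, not a routine technicality. You claim that De~Giorgi--Moser iteration applied to $L_g v_k = -R_g$ with $R_g\in L^{n/2}$ yields the uniform bound $\nnm{v_k}_{L^\infty}\le\tfrac12$. But $p=\tfrac n2$ is precisely the borderline Serrin exponent at which the standard iteration \emph{fails} to produce $L^\infty$ bounds without an additional smallness hypothesis on the coefficient; and even when the iteration can be salvaged (e.g.\ by splitting $R_g$ into a bounded compactly supported part and a part with small $L^{n/2}$ norm), the resulting bound depends on $\nnm{R_g}_{L^{n/2}}$ and $\Y(M,[g])$ and there is no reason it should be $\le\tfrac12$. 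Without a uniform two-sided bound on $u$ you cannot transfer the Sobolev inequality from $\tilde g$ back to $g$. Note also that your scheme forces $u$ to solve $L_g u=0$; any two positive solutions are related by a positive $\tilde g$-harmonic function, so if one such $u$ exists but is unbounded, there may be no bounded one at all.

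The paper avoids this entirely by a much more direct argument. Since $R_g\in L^{n/2}$, one can choose $r$ so large that $\nnm{R_g}_{L^{n/2}(M\setminus B(x_0,r))}\le\tfrac12\Y(M,[g])$. For $\varphi$ supported in $M\setminus B(x_0,r)$, the Yamabe inequality combined with H\"older then gives
\[
\Y(M,[g])\,\nnm{\varphi}_{L^{2n/(n-2)}}^2 \le \tfrac{4(n-1)}{n-2}\nnm{d\varphi}_{L^2}^2 + \tfrac12\Y(M,[g])\,\nnm{\varphi}_{L^{2n/(n-2)}}^2,
\]
so the scalar curvature term is absorbed and one obtains the Sobolev inequality on the exterior region directly. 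A Sobolev inequality on $M\setminus B(x_0,r)$ forces a uniform lower bound on the volume of unit balls there; since $M$ is non-compact one can place infinitely many disjoint such balls, hence $\vol(M,g)=\infty$. Finally, a Sobolev inequality valid outside a compact set together with infinite volume upgrades to a global Sobolev inequality by a known result. The point is that the paper uses the $L^{n/2}$ hypothesis only through smallness of the tail, which is automatic, rather than through a critical-exponent regularity estimate, which is not.
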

\begin{proof}
Let fix some ball $B(x_0,r)\subset M$. Since $\Y(M,[g])>0$ and by using the Hölder inequality, for any smooth functions with support outside the ball $B(x_0,r)$, we have 
\begin{align*}
  \nnm{\vp}_{L^{\frac{2n}{n-2}}}^2\leq \frac 1{\Y(M,[g])} \left(\frac{4(n-1)}{n-2}\nnm{d\vp}_{L^2}^2+\left(\int_{M\setminus B(x_0,r)} \nm{R_g}^{n/2}dv_g\right)^{\frac 2n} \nnm{\vp}_{L^{\frac{2n}{n-2}}}^2\right).
\end{align*}
Since $R_g$ is in $L^{\frac n2}(M,g)$, we can take $r$ such that 
\begin{equation*}
\left(\int_{M\setminus B(x_0,r)} \nm{R_g}^{n/2}dv_g\right)^{\frac 2n}\leq \frac {\Y(M,[g])}2,
\end{equation*}
and we obtain the Sobolev inequality on $M\setminus B(x_0,r)$.

According to \cite[Lemma 3.2]{Heb96}, there exists a uniform bound from below on the volume of any ball $B(y,1)\subset M\setminus B(x_0,r)$.  Since $(M,g)$ is not compact, we can find a sequence of points $x_k$ in $M\setminus B(x_0,r)$ such that $x_k$ is in $B(x_0,k+1)\smallsetminus B(x_0,k)$. Then the balls $B(x_{3k},1)$ are two by two disjoint, and thus 
\begin{equation*}
\vol_g(M)\geq \sum_{k\geq 0} \vol_g(B(x_{3k},1))=\infty.
\end{equation*}

Therefore, according to \cite[Proposition 2.5]{carronduke}, there exists $C'$ such that the Sobolev inequality \eqref{eq:Sobolev} holds on $M$.
\end{proof}

\begin{prop}\label{prop:YamabeCylin}
If $(N^{n-1},h)$ is an Einstein manifold with scalar curvature
\begin{equation*}
 R_h=(n-2)(n-1),
\end{equation*}
then
\begin{equation*}
\frac{\Y(N\times\mathbb R,[h+ds^2])}{\vol(N,h)^{\frac{2}{n}}}=\frac{\Y(\mathbb S^{n})}{\vol(\mathbb S^{n-1})^{\frac{2}{n}}}.
\end{equation*}
\end{prop}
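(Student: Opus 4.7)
My plan is to establish the equality by proving matching upper and lower bounds.

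\textbf{Upper bound.} I construct an explicit conformal metric with constant scalar curvature in the conformal class $[h+ds^2]$. Set $\phi(s) = (\cosh s)^{-(n-2)/2}$ and $\tilde g := \phi^{4/(n-2)}(h+ds^2) = (\cosh s)^{-2}(h+ds^2)$. Since $\phi$ depends only on $s$ we have $\Delta_{h+ds^2}\phi = -\phi''$, and a direct computation using the hypothesis $R_h = (n-1)(n-2)$ gives
$$L_{h+ds^2}\phi = -\frac{4(n-1)}{n-2}\phi''(s) + R_h\phi(s) = n(n-1)\,\phi(s)^{(n+2)/(n-2)}.$$
By the conformal covariance formula of the conformal Laplacian recalled in \sref{sec:yamabe}, this implies $R_{\tilde g} \equiv n(n-1)$, with total volume
$$\vol(\tilde g) = \vol(N,h)\int_\R (\cosh s)^{-n}\,ds.$$
Specializing to $(N,h) = (\bS^{n-1}, g_{\mathrm{round}})$, the metric $\tilde g$ extends smoothly across the ends $s\to\pm\infty$ to the round metric on $\bS^n$ (the two ends becoming antipodal poles), hence $\int_\R(\cosh s)^{-n}\,ds = \vol(\bS^n)/\vol(\bS^{n-1})$. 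Since this integral depends only on $n$, the identity $\vol(\tilde g) = \vol(N,h)\,\vol(\bS^n)/\vol(\bS^{n-1})$ holds for any admissible $(N,h)$. Evaluating the Yamabe functional of $\tilde g$ on the constant function $1$ and using $\Y(\bS^n) = n(n-1)\vol(\bS^n)^{2/n}$ gives the $\le$ direction.

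\textbf{Lower bound.} For $\varphi \in C_0^\infty(N \times \R)$, introduce the fiberwise $L^{2n/(n-2)}$ profile
$$\Phi(s) := \left(\vol(N,h)^{-1}\int_N \varphi(x,s)^{\frac{2n}{n-2}}\,dv_h(x)\right)^{\frac{n-2}{2n}},$$
so that $\int_{N\times\R}\varphi^{2n/(n-2)}\,dv_g = \vol(N,h)\int_\R \Phi^{2n/(n-2)}\,ds$ matches the denominator of a one-dimensional problem. The key step is to establish the symmetrization inequality
$$\int_{N\times\R}\left[\frac{4(n-1)}{n-2}|d\varphi|^2 + R_h\varphi^2\right]dv_g \geq \vol(N,h)\int_\R\left[\frac{4(n-1)}{n-2}(\Phi')^2 + R_h\Phi^2\right]ds,$$
which reduces the minimization to the one-dimensional variational problem on $\R$. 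The minimum of this $1$D functional, attained by a dilate of $(\cosh s)^{-(n-2)/2}$, coincides by the upper bound computation with $\Y(\bS^n)\vol(N,h)^{2/n}/\vol(\bS^{n-1})^{2/n}$.

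\textbf{Main obstacle.} The crux is the symmetrization inequality above. Minkowski's integral inequality transfers the $s$-derivative contribution from $\varphi$ to $\Phi'$, but one must also handle the horizontal term $|d_N\varphi|^2 \geq 0$ and verify it only strengthens the estimate. This is where the Einstein assumption $\Ric_h = (n-2)h$ plays a decisive role: via a Lichnerowicz-type spectral gap on each fiber $N\times\{s\}$, any fiberwise non-constant variation of $\varphi$ strictly increases the left-hand side relative to the fiberwise-constant model. Making this argument quantitative, and identifying its equality case (needed elsewhere in the paper to characterize extremizers), is the main technical step.
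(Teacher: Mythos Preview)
Your upper bound is correct and coincides with the paper's: both use the explicit conformal metric $g_0=(\cosh s)^{-2}(h+ds^2)$, compute $R_{g_0}=n(n-1)$ and $\vol(g_0)=\vol(N,h)\vol(\bS^n)/\vol(\bS^{n-1})$, and plug into the Yamabe functional.

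The lower bound, however, has a fatal gap: the symmetrization inequality you state is \emph{false}. Take a product test function $\varphi(x,s)=a(s)u(x)$ with $u$ non-constant on $N$. Then $\Phi(s)=c\,a(s)$ with $c=\vol(N)^{-(n-2)/(2n)}\|u\|_{L^{2n/(n-2)}(N)}$, and your inequality reads
\[
A\int_\R (a')^2 + B\int_\R a^2 \;\ge\; A'\int_\R(a')^2 + B'\int_\R a^2,
\]
where $A=\tfrac{4(n-1)}{n-2}\|u\|_{L^2(N)}^2$ and $A'=\tfrac{4(n-1)}{n-2}\vol(N)^{2/n}\|u\|_{L^{2n/(n-2)}(N)}^2$. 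By H\"older on $N$ one has $\|u\|_{L^2}^2\le \vol(N)^{2/n}\|u\|_{L^{2n/(n-2)}}^2$ with strict inequality when $u$ is non-constant, so $A<A'$. Rescaling $a_\epsilon(s)=\epsilon^{-(n-2)/(2n)}a(s/\epsilon)$ keeps $\|a_\epsilon\|_{L^{2n/(n-2)}}$ fixed while $\int(a_\epsilon')^2\to\infty$ and $\int a_\epsilon^2\to 0$; for small $\epsilon$ the left side is strictly smaller than the right side. Neither Minkowski's inequality nor the Lichnerowicz gap repairs this: the defect sits in the $\partial_s$--term, not in the fiberwise potential term. (What \emph{is} true is that the full Yamabe quotient $\mathcal F(\varphi)$ stays bounded below, but not via your numerator comparison.)

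The paper obtains the lower bound by a completely different route. Using Bishop--Gromov, it first shows $\Y(N\times\R,[h+ds^2])<\Y(\bS^n)$ when $N\neq\bS^{n-1}$, which by \cite[Theorem~C]{AB} guarantees existence of a smooth Yamabe minimizer $\varphi$ with exponential decay. Writing $\tilde g=\varphi^{4/(n-2)}(h+ds^2)=\rho^{-2}g_0$ and exploiting that $g_0$ is Einstein, the authors run Obata's argument (with a careful boundary-term analysis as $|s|\to\infty$ using polyhomogeneity from \cite{ACM}) to conclude that $\nabla_{g_0}\rho$ is a conformal Killing field of $g_0$. Since for $N\neq\bS^{n-1}$ every conformal Killing field of $(N,h)$ is Killing, the conformal factor must be radial, and then \cite[Claim~2.13]{AB} gives the exact value. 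This is where the Einstein hypothesis is really used: it makes $g_0$ Einstein, enabling the Obata rigidity; it is not a spectral-gap phenomenon on the fibers.
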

\proof We note that equality holds if $(N,h)$ is the round sphere $\bS^{n-1}$. If $(N,h)$ is not the round sphere, then by the Bishop-Gromov inequality
$$ \vol(N,h)<\vol(\bS^{n-1}).$$
Moreover the conformal class of $g=h+ds^2$ contains the metric $g_0=\frac 1{\cosh^2(s)}(h+ds^2)$, which is isometric to the spherical suspension of $(N,h)$:
\begin{equation*}
(N\times (0,2\pi),\sin^2(r) h+dr^2).
\end{equation*}
The metric $g_0$ is Einstein, has constant scalar curvature equal to $n(n-1)$ and its volume is 
$$\vol(N,h) \frac{\vol(\bS^n)}{\vol(\bS^{n-1})}.$$
The Yamabe invariant of the cylindrical metric $h+ds^2$ hence satisfies
$$\Y(N\times\R, [h+ds^2])\leq \left(\vol(N,h) \frac{\vol (\bS^n)}{\vol (\bS^{n-1})}\right)^{2/n} n(n-1)<\Y(\bS^n).$$
According to \cite[theorem C]{AB}, there is a bounded smooth function $\varphi$ on $N\times \R$ such that
\begin{equation*}
\int_M \nm{\nabla\vp}^2+R_g\vp^2 dv_g=\Y(M,[g]),
\end{equation*}
and there exist some positive real numbers $c, C, \alpha, \beta$ such that for all $x\in N$ and $s\in\R$
\begin{equation*}
c e^{-\alpha |s|} \le \varphi(x,s)\le C e^{-\beta |s|}.
\end{equation*}
We let $\tilde g=\vp^{\frac{4}{n-2}}(h+ds^2)$ and 
\begin{equation*}
\rho(x,s)=\left(\vp^{\frac{2}{n-2}}(x,s)\cosh(s)\right)^{-1},
\end{equation*}
so that
\begin{equation*}
\tilde g=\rho^{-2}  g_0.
\end{equation*}
We can now run the proof of Obata (see \cite{BE} or \cite{Ob}) and show that since $g_0$ is Einstein, $\tilde g$ must also be Einstein. Indeed, if we note $\gamma_g(X)=L_X g-\frac{2}n( \delta_g X) g$, we have
\begin{equation*}
 \gamma^*_{\tilde g}\left(\frac{1}{\rho}\gamma_{g_0}(\nabla_{g_0}\rho)\right)=\frac{2}{n-2} \gamma^*_{\tilde g}\left(\rst_{\tilde g}\right)=0,
\end{equation*}
hence, with $\vec \nu =\frac{\nabla s}{\nm{\nabla s}_{\tilde g}}$, we get for $T>0$
\begin{align*}
  \int_{N\times [-T,T]}\frac 1{\rho^3} \nm{\gamma_{g_0}(\nabla_{g_0}\rho)}_{\tilde g}^2 dv_{\tilde g}&\leq C\int_{\partial(N\times[-T,T])}\frac 1{\rho}\nm{\gamma_{g_0}(\nabla_{g_0}\rho)}_{\tilde g}\nm{\nabla_{g_0}\rho}_{\tilde g}i_{\vec\nu}(dv_{\tilde g})\\
&\leq 2C\max_{\{-T;T\}}\int_{N}\nm{\gamma_{g_0}(\nabla_{g_0}\rho)}_{g_0}\nm{d\rho}_{g_0}\vp^{2\frac{n-1}{n-2}}dv_h\\
\end{align*}
According to \cite{ACM}, the function $\vp$ is polyhomogeneous, and the boundary term goes to zero as $T$ goes to infinity. 

Then, $\nabla_{g_0}\rho$ is a conformal Killing field. But a conformal vector field of the cylindrical metric is a sum of a conformal vector field on $(N,h)$ and a generator of the translation. If $(N,h)$ is not the round sphere, the conformal  Killing fields of $(N,h)$ are Killing fields, hence the conformal factor must be radial. Then we have (see \cite[Claim 2.13]{AB})
\begin{equation*}
\frac{\Y(N\times\mathbb R,[h+ds^2])}{\vol(N,h)^{\frac{2}{n}}}=\frac{\Y(\mathbb S^{n})}{\vol(\mathbb S^{n-1})^{\frac{2}{n}}}.
\end{equation*}
\endproof

\section{The refined Kato inequality}

The classical Kato inequality asserts that if $\xi$ is a smooth $k$-form on a Riemannian manifold $(M^n,g)$, then 
\begin{equation*}
\left|d|\xi|\right|^2\le \left| \nabla \xi\right|^2.
\end{equation*}
When $\xi$ is moreover assumed to be harmonic, i.e. closed and co-closed
$$d\xi=d^*\xi=0,$$
then for $k\in [0,n/2]$, the Kato inequality can be refined as
\begin{equation}\label{eq:1}
\frac{n+1-k}{n-k}\left|d|\xi|\right|^2\le \left| \nabla \xi\right|^2 ,
\end{equation}
See \cite{bourguignon}, and \cite{branson, kato} for the computation of the refined Kato constant. 

\subsection*{The equality case in the refined Kato inequality for $1$-forms.}

Assume that $(M^n,g)$ is a complete Riemannian manifold, that $\xi\in C^\infty(T^*M)$ is an harmonic $1$-form and that equality holds almost everywhere in the refined Kato inequality:
$$\left|d|\xi|\right|^2=\frac{n-1}{n} \left| \nabla \xi \right|^2.$$
We can locally find a primitive $\Phi$ of $\xi$:
$$d\Phi=\xi.$$ 
Then $\Phi$ is an harmonic function and in this case, the refined Kato inequality is in fact the Yau inequality for harmonic functions (\cite[lemma 2]{yau}). Moreover, passing to the normal covering $\pi\colon \widehat M\rightarrow M$ associated to the kernel of the homomorphism
\begin{equation*}
\gamma\in \pi_1(M)\mapsto \int_\gamma \xi,
\end{equation*}
we have $\pi^*\xi=d\Phi$ for an harmonic function $\Phi\in C^\infty(\widehat{M})$ (see for instance \cite{Ha02}). We will now review the proof of a result of P.~Li and J.~Wang (\cite{liwang}).
\begin{prop}\label{liwang}
Assume that $({M^n},g)$ is a complete Riemannian manifold carrying a non-constant harmonic function $\Phi$ such that almost everywhere\begin{equation*}
\left|d|d\Phi|\right|^2=\frac{n-1}{n} \left| \nabla d\Phi \right|^2.
\end{equation*}
Then there exists a complete Riemannian manifold $(N^{n-1},h)$ such that $({M^n},g)$  is isometric to $ N^{n-1}\times \R$ endowed with a warped product metric $\eta^2(t) h+(dt)^2$. Moreover, there are constants $c_1,c_2$ such that:
$$\Phi(x,t)=c_1+c_2\int_0^t \frac{dr}{\eta(r)^{n-1}}.$$
\end{prop}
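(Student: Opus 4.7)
The plan is to derive the warped product structure from the rigidity of the equality case of the refined Kato inequality, then integrate the harmonicity condition to recover $\Phi$. Work first on the open set $\Omega=\{d\Phi\neq 0\}$ and set $X=\nabla\Phi/|\nabla\Phi|$. Since $d|\nabla\Phi|=\nabla^2\Phi(X,\cdot)$, the equality hypothesis combined with the harmonicity condition $\mathrm{tr}\,\nabla^2\Phi=0$ forces the Hessian to take the rigid form
\begin{equation*}
\nabla^2\Phi=\lambda\,X^\flat\otimes X^\flat-\frac{\lambda}{n-1}\bigl(g-X^\flat\otimes X^\flat\bigr),
\end{equation*}
for some function $\lambda$ on $\Omega$: indeed, $X$ must be an eigenvector of $\nabla^2\Phi$ and the restriction of $\nabla^2\Phi$ to $X^\perp$ must be a scalar multiple of the identity, which is the standard characterization of the equality case of the refined Kato inequality for $1$-forms.

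A direct computation from this identity gives $\nabla_X X=0$, so the integral curves of $X$ are unit-speed geodesics, and for $Y\perp X$,
\begin{equation*}
\nabla_Y X=-\frac{\lambda}{(n-1)|\nabla\Phi|}\,Y,
\end{equation*}
so the level hypersurfaces $\Sigma_c=\{\Phi=c\}$ are totally umbilical with umbilicity factor depending only on $c$: directions tangent to $\Sigma_c$ satisfy $Y|\nabla\Phi|=\nabla^2\Phi(X,Y)=0$, so $|\nabla\Phi|$ is constant on each leaf, and $\lambda=X|\nabla\Phi|$ is therefore constant on each leaf as well. Fixing a base leaf $N=\Sigma_{c_0}$ and parameterizing by arc length $t$ along the normal geodesic flow of $X$ yields Fermi coordinates on a collar neighborhood in which $g=g_t+dt^2$ with $\partial_t g_t=2\mu(t)\,g_t$, where $\mu(t)=\lambda(t)/((n-1)|\nabla\Phi|(t))$. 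Integrating this pointwise scalar ODE gives $g_t=\eta(t)^2 h$ with $h=g_0$ and $\eta(t)=\exp\bigl(\int_0^t \mu(r)\,dr\bigr)$, so the collar has warped product form $g=\eta(t)^2 h+dt^2$.

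Now $\Phi$ is constant on each leaf, hence $\Phi=\Phi(t)$ depends only on $t$, and $\Delta\Phi=0$ in the warped product becomes
\begin{equation*}
\Phi''(t)+(n-1)\frac{\eta'(t)}{\eta(t)}\Phi'(t)=0,
\end{equation*}
whose integration yields $\Phi'(t)=c_2\,\eta(t)^{-(n-1)}$ and the stated formula for $\Phi$. The remaining and main obstacle is globalization: one must show that $\Omega=M$ and that $t$ ranges over all of $\R$. Absence of critical points of $\Phi$ follows because at a critical point the rigid Hessian structure, propagated by analytic continuation from $\Omega$, would force $\Phi$ to be locally constant, contradicting non-constancy by unique continuation. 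Completeness of $(M,g)$ along the $X$-geodesics then forces $t\in\R$: a finite endpoint would correspond either to a collapse of $\eta$, producing a critical point of $\Phi$ already ruled out, or to a geodesically incomplete direction, contradicting completeness. This yields the isometry $(M^n,g)\cong(N^{n-1}\times\R,\,\eta(t)^2 h+dt^2)$.
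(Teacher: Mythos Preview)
Your local analysis on $\Omega=\{d\Phi\neq 0\}$ is essentially the paper's: you identify the rigid Hessian form, deduce that $X$ has geodesic integral curves and that the level hypersurfaces are totally umbilical with umbilicity constant along each leaf, and obtain the warped product structure $\eta(t)^2 h+dt^2$ together with $\Phi'(t)=c_2\,\eta(t)^{-(n-1)}$. That part is fine.

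The globalization, however, has real problems.

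\textbf{(1)} Your appeal to ``analytic continuation'' is not legitimate: nothing in the hypotheses makes $g$ or $\Phi$ real analytic, and the rigid Hessian identity is only established on $\Omega$, where $X$ is defined. There is no mechanism to ``propagate'' it to a critical point, nor does the identity make sense there.

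\textbf{(2)} Your claim that a collapse of $\eta$ at a finite endpoint ``produces a critical point of $\Phi$'' is backwards. You have just shown $\Phi'(t)=c_2\,\eta(t)^{-(n-1)}$ with $c_2\neq 0$, so $\eta\to 0$ forces $|d\Phi|\to\infty$, not $|d\Phi|\to 0$. The correct obstruction is that $\psi(t)=\Phi(E(x,t))$ is a smooth function of $t$ for all $t\in\R$ (since $M$ is complete the geodesic $E(x,\cdot)$ exists for all time, and $\Phi$ is smooth), so a blow-up of $\psi'$ or of $\psi$ itself at a finite time is impossible. This is how the paper rules out a finite endpoint: one shows $\eta(t)=o(\omega-t)$ and hence $\psi(\omega)=c\int_0^\omega\eta^{-(n-1)}\,dr=+\infty$, a contradiction.

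\textbf{(3)} Even after you know the normal flow is defined for $t\in\R$, you have not argued that the map $E:N\times\R\to M$ is a \emph{global} isometry. The paper addresses this explicitly: since $E$ is a local isometry, $E(N\times\R)$ is complete, hence closed, and also open, so $E$ is onto; injectivity then follows because $\psi$ is strictly monotone in $t$ (so $E(x,s)=E(y,t)$ forces $s=t$) and flowing back along $-X$ for time $t$ recovers the starting point. None of this is in your argument.

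So the outline is right, but the final paragraph needs to be rewritten: drop the analytic-continuation claim, replace the ``critical point'' contradiction by the blow-up of $\psi$ (using that $\psi=\Phi\circ E$ is globally smooth by completeness), and add the surjectivity/injectivity arguments for $E$.
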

\proof We  assume that on $U=\{x\in  M^n,\, d\Phi(x)\not=0\}$ 
$$\left|d|d\Phi|\right|^2=\frac{n-1}{n} \left| \nabla d\Phi\right|^2.$$
Moreover, we add a constant to $\Phi$ such that the set 
\begin{equation*}
N=\{x\in U, \Phi(x)=0\}
\end{equation*}
is not empty.

The equality case in the Yau's inequality implies that there is a function 
$a\colon U\rightarrow \R$ such that if we let $\vec\nu=\frac{\nabla \Phi}{|\nabla \Phi|}$, then in the orthogonal decomposition $T_xM^n=\ker(d\Phi)\oplus\R\,\vec\nu $, we have
$$\nabla d\Phi=\left(\begin{array}{cc}  a \Id& 0 \\ 0 & -(n-1) a\end{array}\right).$$
Therefore, we see that
\begin{equation*}
 \vec u\in \ker(d\Phi) \,\Rightarrow\, d_{\vec u}\left(|d \Phi|^2\right)=0.
\end{equation*}
Hence the length of $d \Phi$ is locally constant on the regular level sets of $\Phi$. Moreover, we have 
\begin{equation*}
 \nabla_{\vec u}(\vec\nu)=\frac 1{\nm{d\Phi}}\left(\nabla_{\vec u}(\nabla\Phi) - \ps{\nabla_{\vec u}(\nabla\Phi)}{\vec \nu}\vec\nu\right),
\end{equation*}
and since $\nabla_{\vec\nu}(\nabla\Phi)$ is in  $\R\,\vec\nu$, $\nabla_{\vec\nu}(\vec\nu)=0$ and the integral curves of the vector field $\vec\nu$ are geodesics. 

We consider the map $E\colon  N\times \R\rightarrow {M^n}$ given by
$$E(x,t)=\exp_x(t\vec\nu(x)).$$
For $x\in N$, and $\vec u\in\vec\nu^\perp$, $\nabla_{\vec u}E(x,t)$ is a Jacobi field along $E(x,t)$, hence is orthogonal to $\vec\nu$ for all $t\in \R$.
Consequently, $\Phi(E(x,t))$ only depends on $t$ and there exists a function $\psi\colon \R\rightarrow \R$ such that 
 $\psi(0)=0$ and for all $(x,t)\in N\times\R$,
 \begin{equation*}
\Phi(E(x,t))=\psi(t).
\end{equation*}
We fix $K\subset N$ a compact subset, and we let $(\alpha,\omega)$ be the maximal open set containing $0$ such that $E\colon K\times (\alpha,\omega)\rightarrow {M^n} $ is a local diffeomorphism. Then, on $K\times (\alpha,\omega)$, we have
\begin{equation*}
E^*\left(\nabla d\Phi\right)=\psi'' dt\otimes dt +\psi' \nabla dt.
\end{equation*}
Consequently, 
\begin{equation*}
\psi''=-(n-1) (a\circ E)\quad \text{and}\quad \psi'\nabla dt=(a\circ E)\left(E^* g\right).
\end{equation*}
Hence $a\circ E$ only depends on $t$, and the hypersurfaces $K\times\{t\}\subset (K\times (\alpha,\omega), E^*g)$  are totally umbilical. Therefore, we get that on $K\times (\alpha,\omega)$,
 $$E^*g= \eta^2(t) h+(dt)^2,$$
with\begin{equation}\label{eq:eta}
a\circ E=\frac{\eta'}{\eta}\quad \text{and}\quad \psi(t)=c\int_0^t \frac{dr}{\eta(r)^{n-1}}.
\end{equation}
Now, if $\omega$ is finite, then for some $x\in K$, $(E^*g)(x,w)$ is not invertible, hence we must have $\lim_{t\to \omega}\eta(t)=0$. According to \eqref{eq:eta}, we also have $\lim_{t\to \omega}\eta'(t)=0$, thus $\eta(t)=o(w-t)$ and
\begin{equation*}
\psi(w)=c\int_0^{\omega}\frac{dr}{\eta(r)^{n-1}}=+\infty,
\end{equation*}
which is not possible. Hence $\omega=+\infty$ and the same argument shows that $\alpha=-\infty$. 

Therefore, $E\colon  N\times\R\rightarrow {M^n}$ is an immersion. Since $d\Phi$ is locally constant on the level sets of $\Phi$, $N$ is a connected component of the closed set $\{x\in M, \phi(x)=0\}$, thus is closed. Then, as $E$ is a local isometry, $E(N\times \R)$ is complete, hence closed in $M^n$, and open, thus $E$ is a surjection.

Moreover, if $E(x,s)=E(y,t)$, then $\psi(s)=\psi(t)$ hence $s=t$, and following the flow of $-\vec \nu$ from $E(x,t)$ or $E(y,t)$ for a time $t$, we see that $x=y$. Therefore, $E$ is also injective.
\endproof

\section{The integral Bochner-Weitzenb\"ock Theorem}\label{sec:BW}

In this section, we prove \tref{thm:bochnerkint} and \tref{thm:wplus4D}. Suppose that $\xi$ is a non-trivial harmonic $k$-form on a complete manifold $(M,g)$, not necessarily compact.

For $\ve >0$ we introduce
$$f_\ve =\sqrt{|\xi|^2+\ve^2}.$$
Elementary computations lead to 
$$f_\ve\Delta f_\ve- |df_\ve|^2=\langle \nabla^*\nabla \xi,\xi\rangle-|\nabla\xi|^2,$$
and for $p>0$ we get
\begin{equation*}
\begin{split}
\Delta f_\ve^p&=pf_\ve^{p-2}\left( f_\ve\Delta f_\ve- (p-1)|df_\ve|^2\right)\\
&=pf_\ve^{p-2}\left( \langle \nabla^*\nabla \xi,\xi\rangle-|\nabla\xi|^2+(2-p)|df_\ve|^2\right)\\
&\le pf_\ve^{p-2}\left( \langle \nabla^*\nabla \xi,\xi\rangle-\frac{n+1-k}{n-k} \left|d|\xi|\right|^2+(2-p)|df_\ve|^2\right).
\end{split}
\end{equation*}
Note that we have
$$|df_\ve|^2=\frac{|\xi|^2\left|d|\xi|\right|^2}{ |\xi|^2+\ve^2}\le \left|d|\xi|\right|^2.$$
Hence, choosing $p=\frac{n-1-k}{n-k}$, we have $p\in (0,2)$ and 
$$2-p=\frac{n+1-k}{n-k}.$$
We obtain
$$\Delta f_\ve^p\le pf_\ve^{p-2}\,\, \langle \nabla^*\nabla \xi,\xi\rangle.$$
According to the Bochner-Weitzenb\"ock formula $\langle \nabla^*\nabla \xi,\xi\rangle=-\langle\mcR_k \xi,\xi \rangle$, we then have
\begin{equation*}
\begin{split}
 \Delta f_\ve^p&\le -pf_\ve^{p-2}\,\, \langle\mcR_k  \xi,\xi\rangle\\
& \le -p \frac{k(n-k)}{n(n-1)}\,R_g f_\ve^{p-2} |\xi|^2 +p\, r_k f_\ve^{p-2} |\xi|^2,
\end{split}
\end{equation*}
and we finally get
\begin{equation}\label{eq:basineq}
\Delta f_\ve^p+ \frac{k(n-1-k)}{n(n-1)}\,R_g f_\ve^{p-2}|\xi|^2\leq \frac{n-1-k}{n-k} \,r_k f_\ve^{p-2} |\xi|^2.
\end{equation}

\subsection{The vanishing result}

\noindent If the manifold $(M,g)$ is closed, by multiplying this inequality by $f_\ve^p$ and integrating over $M$, we obtain
\begin{equation*}
\begin{split}\int_M \left|d\left(f_\ve^p\right)\right|^2dv_g+\frac{k(n-1-k)}{n(n-1)}\,\int_M R_g f_\ve^{2(p-1)} |\xi|^2dv_g \leq\frac{n-1-k}{n-k}\,\int_M r_k f_\ve^{2(p-1)} |\xi|^2dv_g.
\end{split}
\end{equation*}

We define  $v=\nm{\xi}^{\frac{n-1-k}{n-k}}$. Since $f_\ve^{2(p-1)}\nm{\xi}^2\leq \nm{\xi}^{2p}$, by Fatou's Lemma we see that $v$ is in $H_1^2(M)$, and by letting $\ve$ go to zero, we get by Lebesgue's dominated convergence theorem that
\begin{equation*}
\int_M \left[|dv|^2+\frac{k(n-1-k)}{n(n-1)}R_gv^{2}\right]dv_g \leq \frac{n-1-k}{n-k} \int_M r_k v^{2}\, dv_g,
\end{equation*}
When $k\not=\frac{n-1}{2}$, we have
\begin{equation*}
\beta=\frac{4k(n-1-k)}{n(n-2)}\in \left(0,1\right],
\end{equation*}
hence we obtain
\begin{equation*}
\frac{n-2}{4(n-1)}\Y_g(\beta)\nnm{v}^2_{L^{\frac{2n}{n-2}}}\leq \frac{n-1-k}{n-k}  \int_M r_k v^{2}\, dv_g,
\end{equation*}
and according to the Hölder inequality,
\begin{equation}\label{eq:holdercompact}
\frac{n-2}{4(n-1)}\Y_g(\beta)\nnm{v}^2_{L^{\frac{2n}{n-2}}}\leq\frac{n-1-k}{n-k} \nnm{r_k}_{L^{n/2}}\nnm{v}^2_{L^{\frac{2n}{n-2}}}.
\end{equation}
Therefore, either $v$ vanishes on $M$, or 
\begin{equation*}
\frac{n-2}{4(n-1)}\Y_g(\beta)\leq\frac{n-1-k}{n-k} \nnm{r_k}_{L^{n/2}}.
\end{equation*}
In that case, according to \pref{prop:EqualityYaModif}, we obtain
\begin{equation*}
\beta\frac{n-2}{4(n-1)}\Y(M,[g])\leq\frac{n-1-k}{n-k} \nnm{r_k}_{L^{n/2}}.
\end{equation*}
hence
\begin{equation}\label{eq:eigenvanish}
 \nnm{r_k}_{L^{n/2}}\geq \frac{k(n-k)}{n(n-1)}\Y(M,[g]).
\end{equation}

For the middle degree $n/2$ when $n/2$ is even, the Hodge star operator $*$ induces a parallel decomposition  $\Lambda^{\frac{n}{2}}T^*M=\Lambda_+^{\frac{n}{2}}T^*M\oplus \Lambda_-^{\frac{n}{2}}T^*M$. And since the traceless Bochner-Weitzenböck curvature $\mcW_{\frac{n}{2}}$ commutes with $*$, it admits a decomposition
$$\mcW_{\frac{n}{2}}=\mcW_{\frac{n}{2}}^+ \oplus \mcW_{\frac{n}{2}}^-.$$
If $\xi$ is a self-dual form, i.e. if $*\xi=\xi$, and if $-r^+_{n/2}$ is the lowest eigenvalue of $\mcW_{n/2}^+$, we get
\begin{equation}\label{eq:eigenvanishplus}
\nnmr{r^+_{n/2}}_{L^{n/2}}\geq \frac{n}{4(n-1)}\Y(M,[g]).
\end{equation}

\subsection{The equality case}\label{ssec:eigenequality}
We will now characterize the equality case of (\ref{eq:eigenvanish}).

\vspace*{10pt}
\noindent{\bfseries When $\mathbf{1\leq k \leq \frac{n-3}{2}}$.} If equality holds in (\ref{eq:eigenvanish}) and $v$ doesn't vanish, then equality must hold everywhere. In particular, we have $\Y_g(\beta)=\beta \Y(M,[g])$, and the function $v$ attains the infimum in \eqref{eq:YaModif}.  When $k\leq  \frac{n-3}{2}$, since we have 
\begin{equation*}
0<\frac{4k(n-1-k)}{n(n-2)}<1,
\end{equation*}
according to \pref{prop:EqualityYaModif}, $g$ is a Yamabe minimizer and $v$ is constant, hence $\nm{\xi}$ is constant. Furthermore, the equality for the H\"older inequality in (\ref{eq:holdercompact}) implies that $r_k$ is constant, hence
$$r_k=\frac{k(n-k)}{n(n-1)}R_g.$$
By \tref{thm:bochnerk}, every harmonic $k$-form is parallel and $b_k\leq {n \choose k}$.

\vspace*{10pt}
\noindent{\bfseries The middle degree.}
If equality holds in (\ref{eq:eigenvanish}), as $\beta=1$,  $g$ is not necessarily a Yamabe minimizer. However, since $v$ must realize the infimum of the Yamabe functional, the metric $\tilde g=v^{\frac{4}{n-2}}g$ is a Yamabe minimizer. Then, the form $\xi$ is still harmonic for $\tilde g$ but has constant $\tilde g-$length
$$|\xi|_{\tilde g}=1.$$
And since the traceless Bochner-Weitzenböck curvature $\mcW_{\frac{n}{2}}$ only depends on the Weyl curvature, the pinching is conformally invariant and equality also holds for $\tilde g$. Then equality in (\ref{eq:holdercompact}) implies that $r_{n/2}(\tilde g)$ is constant, hence $r_{n/2}(\tilde g)=\frac{n}{4(n-1)}R_{\tilde g}$ and by \tref{thm:bochnerk}, every $\tilde g$-harmonic ${n/2}$-form is $\tilde g$-parallel and $b_{n/2}\leq {n \choose {n/2}}$.

\vspace*{10pt}
\noindent{\bfseries The middle degree in dimension 4.}
If $b_2^+\neq 0$, then according to (\ref{eq:eigenvanishplus}) we have 
\begin{equation}\label{eq:eigenpinch4D}
 \frac{1}{3}\Y(M,[g])\leq  \nnm{r_2^+}_{L^{2}}=2\nnm{w^+}_{L^2}.
\end{equation}

If equality holds in (\ref{eq:eigenpinch4D}) and if there exists a non-trivial self-dual harmonic $2$-form $\xi$, then according to the study of the middle degree case, there is a Yamabe minimizer $\tilde g \in [g]$ such that $\xi$ is $\tilde g$-parallel with $\nm{\xi}_{\tilde g}^2=2$. Then $\xi$ is a K\"ahler form on $(M,\tilde g)$.

\section{Pinching involving the norm of the curvature}

On a closed manifold, according to (\ref{eq:eigenvanish}) and the inequalities of \sref{sec:eigenormineq}, if $b_k\neq 0$, we have

\begin{equation}\label{eq:normvanish}
\left(a_{n,k} \nnm{W}_{\frac{n}{2}}^2 + b_{n,k}\nnmr{\rst}_{\frac{n}{2}}^2\right)^{\frac{1}{2}}\geq \nnm{r_k}_{L^{n/2}}\geq\frac{k(n-k)}{n(n-1)} \Y(M,[g]).
\end{equation}

We will now characterize the equality case in \eqref{eq:normvanish}.

\subsection{For one-forms in dimension greater than 5}

If $b_1\neq 0$ and if
\begin{equation*}
\nnmr{\rst}_{L^{\frac{n}{2}}}=\frac{1}{\sqrt{n(n-1)}} \Y(M,[g]),
\end{equation*}
then according to subsection~\ref{ssec:eigenequality}, $Ric_g$ is nonnegative with $b_1$ zero eigenvalues which correspond to $b_1$ parallel vector fields. According to the de Rham splitting theorem, the universal cover of $(M,g)$ splits as a Riemannian product $(N^{n-b_1}\times\R^{b_1}, h+(dt)^2)$. But according to \lref{lem:rk}, $Ric_g$ has only two distinct eigenvalues, hence $b_1=1$ and $(N,h)$ is Einstein with positive scalar curvature. 

\subsection{For one-forms in dimension 4}

Since there must be equality in (\ref{eq:eigenvanish}), there must also be equality in the Kato inequality. Then according to \pref{liwang}, $M$ has a normal cover  $\widehat{M}=N^{3}\times \R$ with a warped product metric
$$\hat g=\eta^2(t)h+(dt)^2,$$ 
where for some $T>0$, $\eta$ is a $T$-periodic function and the deck transformation group is generated by 
$$\gamma(x,t)=(\phi(x), t+T),$$ 
with $\phi\colon N\rightarrow N$ a $h$-isometry.

We can write that $\hat g$ is isometric to $\tilde g=e^{-2f(s)}(h+ds^2)$. Then,
\begin{equation*}
\rst_{\tilde g}=\rst_{h}+\frac{1}{2}(1-f''-(f')^2)\left(h-3 ds^2\right).
\end{equation*}
Since equality holds in the inequality between the first eigenvalue and the norm of $\rst_{\tilde g}$, we have
\begin{equation*}
\rst_{\tilde g}=r_1e^{-2f}\left(ds^2-\frac{1}{3}h\right),
\end{equation*}
then
\begin{equation*}
\rst_h=\left(r_1e^{-2f}+\frac{1}{2}(1-f''-(f')^2)\right)\left(3ds^2-h\right)
\end{equation*}
and by taking the trace on $TN\subset T\widehat M$, we see that $\rst_h$ must vanish. Thus $(N^3,h)$ is Einstein hence of constant sectional curvature, and $(M,g)$ is conformally equivalent to a quotient of $\bS^{3}\times\R$. We recover \tref{thm:gursky} i).

\begin{rems}
i) If the translation parameter $T$ is too large the the product metric cannot be a Yamabe minimizer.
Indeed the second variation of the Yamabe functional has a negative eigenvalue at the product metric when
$$T^2>\frac{4\pi^2(n-1)}{R_{h}}.$$
Conversely, on $\bS^{n-1}\times \bS^1$, the product metric is a Yamabe minimizer as soon as
\begin{equation}\label{eq:TranslParam}
T^2\leq\frac{4\pi^2}{n-2}
\end{equation}
(cf. \cite{schoen}). Therefore, in dimension $4$, if $b_1\neq 0$, equality holds in \eqref{eq:gursky1} if and only if $(M,g)$ is conformally equivalent to a quotient of $\bS^3\times \R$ with translation parameter satisfying \eqref{eq:TranslParam}. 

ii) If $(M,g)$ satisfies the pinching
\begin{equation*}
\int_M \nmr{\rstg}^2dv_g\leq\frac{1}{12} \int_M R_g^2dv_g
\end{equation*}
which is conformally invariant according to the Gauss-Bonnet formula, we can suppose (up to a conformal change) that $g$ is a Yamabe minimizer and satisfies (\ref{eq:gursky1}).
\end{rems}
 
\subsection{For two-forms in dimension 4}

If we have the equality
\begin{equation*}
\nnm{W_g}_{L^2}=\frac{1}{2\sqrt{6}} \Y(M,[g]),
\end{equation*}
and $b_2\neq 0$, then by taking a two-fold covering if M is not orientable and choosing the right orientation, we have equality in (\ref{eq:eigenpinch4D}), $b_2^+(M)=0$ and $W_g^-=0$. Hence $(M,g)$ is conformally equivalent to a K\"ahler self-dual manifold with constant scalar curvature. According to \cite{bourguignon2, derdzinski}, $(M,g)$ is conformally equivalent to $\bP^2(\C)$ endowed with the Fubini-Study metric, and we recover \tref{thm:gursky} ii).

\subsection{In degree $\mathbf{k\in [2,\frac{n-2}{2}]}$ when $n\geq 7$}
If equality holds in (\ref{eq:normvanish}) and if there exists a non-trivial harmonic $k$-form, then according to \pref{prop:normsplit} we must have $k=2$.

When $n\ge 7$, we have $k\leq \frac{n-3}{2}$, and according to subsection~\ref{ssec:eigenequality}, the metric $g$ is a Yamabe minimizer and $\xi$ is parallel. According to \pref{prop:normsplit}, we obtain a parallel decomposition $T^*M=V\oplus V^\perp$, and the universal cover of $(M,g)$ splits as a Riemannian product
$$\pi\colon\tilde M=X_1\times X_2\rightarrow M$$ 
where $X_1$ has dimension $2$ and $\pi^*\xi$ is colinear to $\lambda dv_{X_1}$. Still from \pref{prop:normsplit}, we see that $\gamma=0$, that $X_2$ has constant positive sectional curvature, that we can normalize to be $1$, and that $X_1$ has constant sectional curvature, hence is a $2$-sphere of curvature $n-5$.

\subsection{For $2$-forms in dimension $6$}
We consider a closed manifold $(M^6,g)$ with $b_2\not=0$ which satisfies
$$\nnm{r_2}_{L^{3}}=\frac{4}{15} \Y(M,[g])$$
and
$$|r_2|^2= a_{6,2} \nm{W}^2 + b_{6,2}\nmr{\rst}^2.$$

In this case, there is an harmonic $2$-form $\xi$ for which equality holds in the refined Kato inequality, and the curvature operator is
$$\beta \left(\frac{g_V^2}{2}+\frac{g_{V^\perp}^2}{2}\right)+\gamma \frac{g^2}{2},$$
where at each point 
$$T_xM=V\oplus V^\perp.$$
Following the computations done in \cite{Carron}, we introduce a local orthonormal frame $(e_1,e_2,e_3,\hdots,e_6)$ and its dual frame $(\theta^1,\hdots ,\theta^6)$, with $V=\mathrm{Vect}(e_1,e_2)$. We can write that
$$d|\xi|=\rho |\xi| \theta^1\quad\text{and}\quad\xi=|\xi|\theta^1\wedge \theta^2.$$
The computation leads to
$$\nabla_{e_1} \xi =\rho\xi,\quad \nabla_{e_2} \xi =0\quad\text{and}\quad \nabla_{e_j} \xi =-\frac{1}{4}\rho|\xi| \theta^j\wedge \theta^2,$$
for $j\geq 3$. Hence, writing $\Omega=\frac{\xi}{|\xi|}$, we obtain $\nabla_{e_1} \Omega =\nabla_{e_2} \Omega =0$,
$$R(e_1,e_2)\Omega=(\beta+\gamma) \Omega\quad\text{and}\quad R(e_1,e_2)\Omega=-\nabla_{[e_1,e_2]}\Omega.$$
This implies that 
$$[e_1,e_2]\in V=\mathrm{Vect}(e_1,e_2).$$
Hence $\nabla_{[e_1,e_2]}\Omega=0$ and thus $ \beta+\gamma=0$. However, the scalar curvature of $g$ is
$$R_g=14\beta+30\gamma=-16\beta.$$
This is not possible, since $\beta\ge 0$ and since we have assumed that the Yamabe invariant of $(M,g)$ is positive.

\subsection{The middle degree}
We consider a closed manifold $(M^n,g)$ with $b_{\frac{n}{2}}\neq 0$ and such that the following equality holds
\begin{equation*}
a_{n,n/2} \left(\int_M |W|^{\frac{n}{2}}dv_g\right)^{\frac{2}{n}}=\frac{n}{4(n-1)} \Y(M,[g]).
\end{equation*}

If $n/2$ is even, by \pref{prop:evenmidegnormsplit} we must have $n=4$ and $(M,g)$ conformally equivalent to $\bP^2(\C)$ endowed with the Fubini-Study metric.

If $n/2$ is odd, then according to the middle degree case in \sref{sec:BW}, up to a conformal change $\tilde g= |\xi|^{\frac4n} g$ on the metric, we can suppose that $g$ is a Yamabe minimizer and that $\xi$ is parallel.

According to \pref{prop:oddmidegnormsplit}, the universal cover of $(M,g)$ splits as a Riemannian product $X_1\times X_2$ where $X_1$ and $X_2$ have dimension $n/2$. Moreover, in the orthogonal decomposition
$$\Lambda^{\frac n2} T^*(X_1\times X_2)=\bigoplus_{j=0}^{\frac n2}\Lambda^{j} T^*X_1\otimes\Lambda^{\frac n2-j} T^* X_2,$$
the Bochner-Weitzenb\"ock curvature has the decomposition
$$\mcR_{\frac{n}{2}}=\sum_{j=0}^{\frac{n}{2}}\left(\mcR^{X_1}_{{\frac{n}{2}}-j}\otimes \Id_{\Lambda^{j}T^*X_2}+ \Id_{\Lambda^{{\frac{n}{2}}-j}T^*X_1}\otimes \mcR_{j}^{X_2}\right).$$
Hence for $j\in \{0,\hdots,{\frac{n}{2}}\}$ , $\mcR^{X_1}_{{\frac{n}{2}}-j}$ and $\mcR_{j}^{X_2}$ are multiple of the identity. 
In particular  $\mcR^{X_1}_{{\frac{n}{2}}-2}$ and  $\mcR^{X_2}_{2}$ are multiple of the identity, and by \cite{Taba} or \cite{Lab06}, this implies that $X_1$ and $X_2$ have constant sectional curvature.

Moreover, the  eigenvalues of $\mcR_{\frac{n}{2}}$ are
$$ j\left({\frac{n}{2}}-j\right)\frac{R_g}{n(n-1)},$$
with multiplicity ${n/2 \choose j}^2$, where $j\in \{0,\hdots,{\frac{n}{2}}\}$. The only possibility to have only two distinct eigenvalues is when $n=6$. Then $X_1$ and $X_2$ are two round spheres.

\begin{rem}
  If $X_1$ and $X_2$ are two round spheres of same radius, then the product is Einstein. According to \cite{BE} it is a Yamabe minimizer, and thus equality really holds in \eqref{eq:degre3compact}.
\end{rem}
\section{The non-compact case}\label{sec:noncompact}

We will prove the following result, which implies \tref{thm:noncompact}:
\begin{thm}
Let $(M^{n},g)$, $n\geq 4$, be a complete non-compact Riemannian manifold with positive Yamabe invariant. Assume that  the lowest eigenvalue of the Ricci curvature satisfies  $Ric_- \in L^{p}$  for some $p >\frac{n}{2}$, and assume that $R_g\in L^{\frac n2}$. If
\begin{equation}\label{eq:degre1completeigen}
  \nnmr{r_1}_{L^{\frac{n}{2}}}+\frac{n-4}{4n}\nnm{R_g}_{L^{\frac{n}{2}}} \leq \frac{1}{4}\Y(M,[g]),
 \end{equation}
 then
 \begin{itemize}
 \item either $H^1_c(M,\Z)=\{0\}$ and in particular $M$ has only one end.
 \item or equality holds in (\ref{eq:degre1completeigen}) and there exists an Einstein manifold $(N^{n-1},h)$ with positive scalar curvature and $\alpha>0$  such that $(M^n,g)$ or one of its two-fold covering is isometric to
\begin{equation*}
\left(N^{n-1}\times\mathbb R,\, \alpha\,\cosh^2(t)\left(h+(dt)^2\right)\right).
\end{equation*}
\end{itemize}
\end{thm}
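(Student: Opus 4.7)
The plan is to extend the compact argument of \sref{sec:BW} for $k=1$ to the non-compact setting by combining a cutoff procedure with the Sobolev inequality of \lref{lem:nonCompactSobolev}, and then to invoke \pref{liwang} in the equality case. Assume $H^1_c(M,\Z)\neq \{0\}$. The hypotheses $\Y(M,[g])>0$ and $R_g\in L^{n/2}$ give the Sobolev inequality of \lref{lem:nonCompactSobolev}, and together with $Ric_-\in L^p$ for some $p>n/2$ they allow one to produce, via $L^2$-Hodge theory and Moser iteration, a non-trivial harmonic $1$-form $\xi$ dual to a nonzero compactly supported cohomology class and enjoying the integrability $v:=|\xi|^{\frac{n-2}{n-1}}\in L^{\frac{2n}{n-2}}(M,g)$.

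Following \sref{sec:BW} with $k=1$ and $p=\frac{n-2}{n-1}$, and setting $f_\ve=\sqrt{|\xi|^2+\ve^2}$, the refined Kato and Bochner--Weitzenb\"ock identities yield
\begin{equation*}
\Delta f_\ve^p + \frac{p}{n}R_g f_\ve^{p-2}|\xi|^2 \leq p\,r_1 f_\ve^{p-2}|\xi|^2.
\end{equation*}
Testing against $\chi^2 f_\ve^p$ for $\chi\in C_0^\infty(M)$, letting $\ve\to 0$, and combining with the Sobolev inequality of \lref{lem:nonCompactSobolev} applied to $\chi v$ via the algebraic identity $\frac{n-2}{4(n-1)}=\frac{p}{n}+\frac{(n-2)(n-4)}{4n(n-1)}$, one obtains after H\"older
\begin{equation*}
\frac{1}{4}\Y(M,[g])\nnm{\chi v}^2_{L^{\frac{2n}{n-2}}} \leq \left(\nnm{r_1}_{L^{n/2}}+\frac{n-4}{4n}\nnm{R_g}_{L^{n/2}}\right)\nnm{\chi v}^2_{L^{\frac{2n}{n-2}}}+\frac{n-1}{n-2}\int_M v^2|d\chi|^2 dv_g.
\end{equation*}
Exhausting $M$ by cutoffs $\chi_k$ with gradient supported in a thin annulus, and using $v\in L^{\frac{2n}{n-2}}$, the cutoff error vanishes in the limit, and the pinching \eqref{eq:degre1completeigen} forces equality throughout.

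In the equality case, equality in the refined Kato inequality for $\xi$ combined with \pref{liwang} applied on the normal cover associated to $\xi$ yields an isometry of that cover with a warped product $(N^{n-1}\times \R,\eta^2(t)h+dt^2)$. Equality in \lref{lem:rk} for $k=1$ forces $\rstg$ to have exactly two eigenvalues, so that $(N,h)$ is Einstein with positive scalar curvature (positivity coming from $\Y(M,[g])>0$). Equality in the H\"older inequalities pins down $r_1$ and $R_g$ as scalar multiples of $v^{\frac{4}{n-2}}=|\xi|^{\frac{4}{n-1}}$; substituting into the warped-product Ricci identities produces an ODE for $\eta$ whose unique solution compatible with completeness, positivity and the prescribed pinching ratio is $\eta(t)=\sqrt{\alpha}\cosh(t-t_0)$, and a possible order-two element in the deck group produces the two-fold cover in the conclusion.

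The main obstacle is producing the harmonic $1$-form $\xi$ with the Sobolev integrability $v\in L^{\frac{2n}{n-2}}$ needed to annihilate the cutoff error: this is precisely where the hypothesis $Ric_-\in L^p$ with $p>n/2$ intervenes, powering a Moser iteration that boosts a rough $L^2$ bound coming from the compactly supported cohomology class into the full Sobolev regularity. A secondary delicate point is reading the $\cosh$ profile off the warp ODE: unlike the compact \tref{thm:degre1compactnorm}, where one simply obtains a cylinder, the non-zero $R_g$ correction term in \eqref{eq:degre1completeigen} is exactly what shifts the warp ODE and forces the hyperbolic profile.
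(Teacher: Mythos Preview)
Your overall architecture matches the paper's: Bochner--Weitzenb\"ock plus refined Kato plus cutoff, then \pref{liwang} in the equality case. But two steps are genuinely incomplete.

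\textbf{The cutoff error.} You claim that $v\in L^{\frac{2n}{n-2}}$ alone forces $\int_M v^2|d\chi_k|^2\to 0$. It does not: by H\"older this term is controlled by $\nnm{v}^2_{L^{\frac{2n}{n-2}}(\supp d\chi_k)}\cdot \nnm{d\chi_k}_{L^n}^2$, and the second factor is bounded only if $\vol B(x_0,R)=O(R^n)$, which you never establish. The paper instead exploits that the harmonic form comes from an explicit bounded harmonic function (via the two-ends argument of Cao--Shen--Zhu, after Carron--Pedon reduces $H^1_c\neq 0$ to the existence of two ends on a two-fold cover), so that $\xi\in L^2$. It then pairs $v^2=|\xi|^{2(n-2)/(n-1)}$ with $|d\chi_R|^2$ using the conjugate exponents $\tfrac{n-1}{n-2}$ and $n-1$, obtaining
\[
\int_M v^2|d\chi_R|^2\le \Bigl(\int_{B_{2R}\setminus B_R}|\xi|^2\Bigr)^{\frac{n-2}{n-1}}\cdot \frac{1}{R^2}\vol(B_{2R})^{\frac{1}{n-1}}.
\]
The first factor tends to zero because $\xi\in L^2$; the second stays bounded thanks to Gallot's volume estimate $\vol B(x_0,R)=O(R^{2(n-1)})$, which uses precisely the hypothesis $Ric_-\in L^{p}$, $p>n/2$. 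So $Ric_-\in L^p$ is doing double duty: Moser iteration (as you say) \emph{and} volume growth; you omitted the second.

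\textbf{The Einstein conclusion.} Invoking \lref{lem:rk} here is misplaced: the theorem you are proving is stated for $r_1$, not for $|\rst|$, so there is no equality case of \lref{lem:rk} to exploit. In the paper, once equality holds, $v$ is a Yamabe minimizer and hence solves the Yamabe equation; combined with the warped-product form from \pref{liwang} this gives an explicit ODE for the warping factor, whose first integral forces the $\cosh$ profile (after a non-trivial argument that the conformal time interval is all of $\R$, delicate when $n=4$). That $(N,h)$ must be Einstein is then read off from the \emph{converse} computation: for a general cross-section one evaluates both sides of \eqref{eq:degre1completeigen} on the $\cosh$-warped metric and, using \pref{prop:YamabeCylin}, finds that equality holds if and only if $r_1(h)=0$. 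Your proposed route through ``two eigenvalues of $\rstg$'' does not give this.
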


According to \lref{lem:nonCompactSobolev}, there exists $C$ such that the following Sobolev inequality holds:
\begin{equation}\label{eq:EuclSobolev}
\forall\vp\in C_0^\infty(M)\qquad \nnm{\vp}_{L^{\frac{2n}{n-2}}}^2\leq C\nnm{d \vp}^2_{L^2}.
\end{equation}
Then, according to \cite[Proposition 5.2]{carronpedon}, if $H^1_c(M,\Z)\not=\{0\}$, then $M$ or one of its two-fold covering has at least two ends. 

If $M$ has at least two ends, then according to \cite[Theorem 2]{CSZ}, we can find a compact set $K\subset M$ with
$$M\setminus K=\Omega_-\cup \Omega_+,$$ 
and with both $\Omega_-$ and $\Omega_+$ unbounded, and an harmonic function $\Phi\colon M\mapsto (-1,1)$ such that $d\Phi\in L^2$,
\begin{equation*}
\lim_{\substack{x\to \infty\\\; x\in \Omega_-}}\Phi(x)=-1\quad\text{and}\quad\lim_{\substack{x\to \infty\\\; x\in \Omega_+}}\Phi(x)=1.
\end{equation*}
In particular $\xi:=d\Phi$ is an $L^2$ harmonic $1$-form on $(M,g)$.

If $M$ has only one end and $\check\pi\colon \check M\rightarrow M$ is a two-fold covering of $M$ with at least two ends, then $(\check M, \check\pi^*g)$ satisfies the Sobolev inequality \eqref{eq:EuclSobolev}, and we can find a compact set $K\subset M$ such that $\Omega:=M\setminus K$ is connected and  $\check M\setminus \pi^{-1}(K)=\check\Omega_-\cup\check \Omega_+$ with $\check\Omega_-$ and $\check\Omega_+$ unbounded. Then we can find an harmonic function $\check\Phi\colon \check M\rightarrow (-1,1)$ such that $d\check\Phi\in L^2$,
\begin{equation*}
\lim_{\substack{x\to \infty\\\; x\in \check\Omega_-}}\check\Phi(x)=-1\quad\text{and}\quad\lim_{\substack{x\to \infty\\\; x\in \check\Omega_+}}\check\Phi(x)=1.
\end{equation*}
Moreover this function is unique by maximum principle, hence the image of $\check \Phi$ by a deck transformation of $\check\pi\colon \check M\rightarrow M$ is either $\check\Phi$ or $-\check\Phi$. In particular, the function $|\xi|$=$|d\check \Phi|$ is well defined on $M$ and is in $L^2(M,g)$. 

Furthermore, since  $Ric_-$ is in $L^p$ for some $p>n/2$, and since
$$\Delta |\xi|\le -\Ric_-|\xi|,$$
we get by DeGiorgi-Nash-Moser iterative scheme (see for instance \cite[Theorem B.1]{yang}) that $\xi$ is in $L^\infty$, and that
\begin{equation}\label{limitin}
\lim_{x\to \infty} |\xi|=0.
\end{equation}
In particular, the function $v=\nm{\xi}^{\frac{n-2}{n-1}}$ is in $L^{\frac{2n}{n-2}}(M,g)$.

If $\chi$ is a Lipschitz function with compact support and $h$ is a smooth function, we have the integration by parts formula
\begin{equation*}
\int_M \left|d\left(\chi h\right)\right|^2dv_g =\int_M \left[\left|d\chi\right|^2h^2+\chi^2 h\Delta h\right]dv_g,
\end{equation*}
By multiplying inequality \eqref{eq:basineq} by $\chi^2 f_\ve^{p}$ and integrating over $M$, we obtain:
\begin{multline*}
\int_M\nm{d (\chi f_\ve^{p})}^2dv_g+\frac{n-2}{n(n-1)} \int_M R_g f_\ve^{2p-2}\nm{\xi}^2\chi^2dv_g\leq p\int_M r_1\,f_\ve^{2p-2}\nm{\xi}^2\chi^2dv_g\\
+\int_M \nm{d\chi}^2f_{\ve}^{2p}dv_g,
\end{multline*}
where $p=\frac{n-2}{n-1}$. If we let $\ve$ go to zero, we get by Fatou's Lemma and Lebesgue's dominated convergence theorem that
\begin{equation*}
\int_M\nm{d(\chi v)}^2dv_g+ \frac{n-2}{n(n-1)} \int_M R_g (\chi v)^2dv_g\leq \frac{n-2}{n-1}\int_M r_1(\chi v)^2dv_g+\int_M \nm{d\chi}^2v^{2}dv_g.
\end{equation*}
hence
\begin{multline*}
\int_M\left(\frac{4(n-1)}{n-2}\nm{d(\chi v)}^2+ R_g (\chi v)^2\right)dv_g\leq 4\int_M\left(r_1+\frac{n-4}{4n}R_g\right)(\chi v)^2dv_g\\
+\frac{4(n-1)}{n-2}\int_M \nm{d\chi}^2v^{2}dv_g.
\end{multline*}
For $R>0$, we introduce the functions:
 \begin{equation*}
 \chi_{R}(x)=\begin{cases}
 1&\text{ on }B(x_0,R)\\
 2-\frac{d(x,x_0)}R&\text{ on }B(x_0,2R)\setminus B(x_0,R)\\
 0&\text{ on }M\setminus B(x_0,2R)\end{cases}
\end{equation*}
where $x_0\in M$ is a fixed point.

According to the Hölder inequality,
\begin{multline*}
\int_M\left(\frac{4(n-1)}{n-2}\nm{d(\chi_R v)}^2+ R_g (\chi_R v)^2\right)dv_g\leq 4 \nnm{r_1+\frac{n-4}{4n}R_g}_{L^{\frac{n}{2}}}\nnm{\chi_R v}_{L^{\frac{2n}{n-2}}}^2\\
+\epsilon(R)\nnm{d\chi_R}_{L^{2(n-1)}}^2,
\end{multline*}
where
\begin{equation*}
\epsilon(R)=\frac{4(n-1)}{n-2} \left(\int_{B(x_0,2R)\setminus B(x_0,R)}\nm{\xi}^2dv_g\right)^{\frac {n-2}{n-1}}.
\end{equation*}
Therefore
\begin{align*}
\Y(M,[g])\nnm{\chi_R v}_{L^{\frac{2n}{n-2}}}^2&\leq\int_M\left(\frac{4(n-1)}{n-2}\nm{d(\chi_R v)}^2+ R_g (\chi_R v)^2\right)dv_g\\
&\leq 4 \nnm{r_1+\frac{n-4}{4n}R_g}_{L^{\frac{n}{2}}}\nnm{\chi_R v}_{L^{\frac{2n}{n-2}}}^2 +\epsilon(R)\nnm{d\chi_R}_{L^{2(n-1)}}^2.
\end{align*}
We have
 \begin{equation*}
   \nnm{d\chi_{R}}_{L^{2(n-1)}}^2\leq  \frac 1{R^2}\vol(B(x_0,2R))^{\frac{1}{n-1}},
 \end{equation*}
and according to \cite[Theorem 1]{gallot}  (see also \cite{yauall},\cite{aubry}), when the lowest eigenvalue $Ric_-$ of the Ricci curvature is in $L^q$ for some $\frac n2<q\leq n-1$, then 
\begin{equation}\label{eq:VBallControl}
 \vol B(x_0,R)=O\left(R^{2(n-1)}\right).
\end{equation}
Since $Ric_-=-r_1+\frac{R_g}{n}$, $Ric_-$ is in $L^{n/2}\cap L^p$ for some $p>n/2$, and we have \eqref{eq:VBallControl}, hence
\begin{equation*}
 \epsilon(R)\nnm{d\chi_R}_{L^{2(n-1)}}^2\xrightarrow[R\to\infty]{} 0.
\end{equation*}
Since $v$ is in $L^{\frac{2n}{n-2}}$ and $R_g$ is in $L^{\frac n2}$, we get by Lebesgue's dominated convergence theorem that the function $v$ satisfies
\begin{align*}
\Y(M,[g])\nnm{v}_{L^{\frac{2n}{n-2}}}^2&\leq \frac{4(n-1)}{n-2}\int_M\nm{dv}^2dv_g+ \int_M R_g v^2dv_g\\
&\leq 4 \nnm{r_1+\frac{n-4}{4n}R_g}_{L^{\frac{n}{2}}}\nnm{v}_{L^{\frac{2n}{n-2}}}^2.
\end{align*}
Therefore, if $v$ doesn't vanish, then
\begin{equation}\label{eq:inegcomplete}
\frac{1}{4}\Y(M,[g]) \leq\nnm{r_1+\frac{n-4}{4n}R}_{L^{\frac{n}{2}}}\leq\nnm{r_1}_{L^{\frac{n}{2}}}+\frac{n-4}{4n}\nnm{R}_{L^{\frac{n}{2}}}.
\end{equation}

\vspace*{10pt}

If furthermore equality holds, then $v$ is a minimizer for the Yamabe functional, thus satisfies the Yamabe equation
\begin{equation*}
\frac{4(n-1)}{n-2}\Delta_g v + R_g v=\Y(M,[g])v^{\frac{n+2}{n-2}},
\end{equation*}
 and we can suppose that
\begin{equation*}
\nnm{v}_{L^{\frac{2n}{n-2}}}=1.
\end{equation*}
As $v\in C^{0,\frac{n-2}{n-1}}$, it is smooth and positive, and the metric $\tilde g=v^{\frac{4}{n-2}}g$ has constant scalar curvature equal to $\Y(M,[g])$.  

Moreover, since equality must hold in the refined Kato inequality, then according to \pref{liwang}, $M$ or one of its two-fold covering is isometric to $N\times \R$ endowed with a metric
\begin{equation*}
\hat g=\eta^2(t) h+dt^2.
\end{equation*}
If we take the new coordinate $s=\int_0^t \eta^{-1}(\tau) d\tau$, we can write that
\begin{equation*}
 \hat  g=e^{-2f(s)}(h+ds^2),
\end{equation*}
where $s$ is in $(s_-, s_+)$, with
$$s_+=\int_0^{+\infty} \frac{dt}{\eta(t)}\quad \text{and} \quad s_-=\int_0^{-\infty} \frac{dt}{\eta(t)}.$$

Since $v=e^{(n-2)f}$ is a solution of the Yamabe equation for the metric $\hat g$, the function $w=e^{\frac{n-2}{2}f}$ is a solution of the Yamabe equation for the metric $h+(ds)^2$, hence satisfies
\begin{equation}\label{ED}
-4\frac{n-1}{n-2}w''(s)+R_h w(s)=\Y(M,[g])w(s)^{\frac{n+2}{n-2}}.
\end{equation}
In particular, we see that $R_h$ only depends on $s$, hence is constant.

We can now prove that $s_+=+\infty$. Recall that
\begin{equation*}
|\xi| =\eta^{1-n}=e^{(n-1)f}=w^{2\frac{n-1}{n-2}}.
\end{equation*}
If $s_+$ is finite, then because of (\ref{limitin}), we get
$$\lim_{s\to s_+} w=0.$$
The differential equation (\ref{ED}) implies that $w'$ must have a non-zero limit when $s\to s_+$. Therefore, there exists $c>0$ such that
$$w\underset{s\to s_+}\sim c(s_+-s).$$
And since the metric $\hat g= w^{-\frac{4}{n-2}}(h+ds^2)$ is complete, we must have
\begin{equation*}
\int_0^{s^+}w(s)^{-\frac{2}{n-2}}ds=+\infty,
\end{equation*}
hence $n=4$. But according to \eqref{eq:scalarChange}, when $n=4$, the scalar curvature of $\hat g$ satisfies
\begin{equation*}
  \frac 1{w^3}R_{\hat g}=-6\left(\frac 1w\right)''+\frac 1w R_h,
\end{equation*}
thus $R_{\hat g}$ goes to $-12c^{2}$ when $s\to s_+$, and therefore is not in $L^{n/2}(M,\hat g)$. Consequently, $s_+=+\infty$, and the same argument shows that $s_-=-\infty$.

From (\ref{ED}), we deduce that there is a constant $c$ such that
$$-4\frac{n-1}{n-2}(w')^2+R_h w^2=\frac{n-2}{n}\Y(M,[g])w^{\frac{2n}{n-2}}+c.$$
Since $\lim_{s\to \pm \infty} w=0$, we must have $c=0$. Moreover, since $\Y(M,[g]$ is positive, and $w$ is a positive function we must also have $R_h>0$. Up to a change of time variable and a scaling on $\hat g$, we can suppose that
\begin{equation*}
R_h=(n-2)(n-1). 
\end{equation*}
Let $\varphi=e^{-f}=w^{-\frac{2}{n-2}}$. We obtain
 \begin{equation*}
-(\varphi')^2+\varphi^2=\frac{\Y(M,[g])}{4n(n-1)}.
\end{equation*}
Therefore, for some $s_0$, we have 
$$\varphi(s)=\sqrt{\frac{\Y(M,[g])}{4n(n-1)}} \, \cosh(s-s_0)\,.$$
\vspace*{10pt}

Conversely, if $(N^{n-1},h)$ is a closed manifold with positive scalar curvature
$$R_h=(n-2)(n-1),$$
and if
\begin{equation*}
(M,g)=\left(N^{n-1}\times\mathbb R,\, \alpha\cosh^2(t)\left(h+(dt)^2\right)\right),
\end{equation*}
then
\begin{equation*}\begin{split}
R_{g}&=(n-1)(n-4)\frac{1}{\alpha\,\cosh^4(t)} \\
\rst_g&=\rst_h+2\frac{n-2}{n}\left(h-(n-1)ds^2\right)
\end{split}\end{equation*}
We see that the lowest eigenvalue of $\rst_g$ satisfies
$$r_1(g)=\frac{1}{\alpha\cosh^4(t)}\left(r_1(h)+\frac{2(n-2)(n-1)}{n}\right),$$
and thus we obtain
\begin{equation*}
\nnm{r_1}_{L^{\frac{n}{2}}}+\frac{n-4}{4n}\nnm{R_g}_{L^{\frac{n}{2}}}=\frac{C}{4} \Y(M,[g]),
\end{equation*}
with
\begin{equation*}
  \begin{split}
C&=\frac{n(n-1)+4 r_1(h)}{\Y(M,[g])}\vol((N,h))^{\frac{2}{n}}\left(\int_{\mathbb R}\frac{dt}{\cosh^n(t)}\right)^{\frac{2}{n}}\\
&=\left(1+\frac{4 r_1(h)}{n(n-1)}\right)\left(\frac{\vol((N,h))}{\vol(\mathbb S^{n-1})}\right)^{\frac{2}{n}}\frac{\Y(\mathbb S^n)}{\Y(N\times\mathbb R,[h+dt^2])}\,.
\end{split}
\end{equation*}
According to \cite[Proposition 2.12]{AB}, we always have
\begin{equation*}
\left(\frac{\vol((N,h))}{\vol(\mathbb S^{n-1})}\right)^{\frac{2}{n}}\frac{\Y(\mathbb S^n)}{\Y(N\times\mathbb R,[h+dt^2])}\geq 1.
\end{equation*}
Hence, for $C$ to be equal to $1$, $r_1(h)$ must vanish, i.e. $h$ must be Einstein. Then, according to \pref{prop:YamabeCylin}, we have
\begin{equation*}
\left(\frac{\vol((N,h))}{\vol(\mathbb S^{n-1})}\right)^{\frac{2}{n}}\frac{\Y(\mathbb S^n)}{\Y(N\times\mathbb R,[h+dt^2])}=1,
\end{equation*}
and it follows that equality holds in \eqref{eq:degre1completeigen}.

\end{document}